\def\R{\mathbb{R}}
\def\d{\mathrm{D}}
\def\B{\mathbb{B}}
\def\O{\mathbb{O}}
\def\Z{\mathbb{Z}}
\def\A{\mathcal{A}}
\def\S{\mathcal{S}}
\def\M{\mathcal{M}}
\def\L{\mathcal{L}}
\def\C{\mathbf{C}}
\def\K{\mathbf{K}}
\newcommand{\dsum}{\displaystyle\sum}
\newtheorem{theorem}{Theorem}
\newtheorem{cor}[theorem]{Corollary}
\newtheorem{lem}[theorem]{Lemma}
\newtheorem{example}[theorem]{Example}
\pgfplotsset{compat=newest}
\let\origmaketitle\maketitle
\def\maketitle{
  \begingroup
  \def\uppercasenonmath##1{} 
  \let\MakeUppercase\relax 
  \origmaketitle
  \endgroup
}
\title{Continuous maximal covering location problems\\with interconnected facilities}
\author[V\MakeLowercase{ictor} B\MakeLowercase{lanco} \MakeLowercase{and} R\MakeLowercase{icardo} G\MakeLowercase{\'azquez}]{{\large V\'ictor Blanco and Ricardo G\'azquez}\medskip\\
IEMath-GR, Universidad de Granada\\
\texttt{vblanco@ugr.es}, \texttt{rgazquez@ugr.es}
}
\begin{document}

\maketitle

\begin{abstract}
In this paper we analyze a continuous version of the maximal covering location problem, in which the facilities are required to be interconnected by means of a graph structure in which two facilities are allowed to be linked if a given distance is not exceed. We provide a mathematical programming framework for the problem and different resolution strategies. First, we propose a Mixed Integer Non Linear Programming formulation, and derive properties of the problem that allow us to project the continuous variables out avoiding the nonlinear constraints, resulting in an equivalent pure integer programming formulation. Since the number of constraints in the integer programming formulation is large and the constraints are, in general, difficult to handle, we propose two branch-\&-cut approaches that avoid the complete enumeration of the constraints resulting in more efficient procedures. We report the results of an extensive battery of computational experiments comparing the performance of the different approaches.
\keywords{Maximal Covering Location, Continuous Location, Mixed Integer Non Linear Programming, Integer Linear Programming, Branch-\&-Cut.}
\end{abstract}
\section{Introduction}
The goal of Covering Location problems is to locate a set of facilities to give service to a finite set of users provided that the facilities are allowed to satisfy the demand of the users within certain coverage distance. The most popular covering location problems are the $p$-center \cite{hakimi64} and the maximal covering location~\cite{church1974maximal} problems. In the $p$-center problem, all users are assumed to be covered, by minimizing the maximum distance between the customers and their closest facilities, being then useful for locating emergency services in which every user should be located ``close'' to an available facility to be attended. In this case, the coverage distances for each facility are determined by the distance to its furthest user. In contrast, in the maximal covering location problem the coverage areas are given and the goal is to determine the optimal position of the facilities to cover the maximal (weighted) number of customers. This problem has applications in the location of warehouses, health-care centers, fire stations, etc. ~\cite{church1974maximal}, but also in other types of problem not directly related with logistics, as for instance, in data abstraction, selection of stocks, classification or grouping problems (see ~\cite{chung86}). The interested readers are referred to the recent book by Church and Murray~\cite{church2018location} or the chapter \cite{sergio-alfredo2020} for further details on covering location problems. 

Covering Location problems can be analyzed within the two main different frameworks in Location Analysis: discrete and continuous. In the discrete case, a given finite set of potential facilities is provided, and one has to choose, among them, the optimal facilities. These problems are useful in case the facilities to be located are physical services as ATMs, stores, hospitals, etc., in which the potential locations can be previously determined by the decision maker. However, in telecommunication networks, the positions of routers, alarms, etc. are more flexible to be positioned, and a continuous framework in which the facilities are allowed to be locate in the whole space (or in restricted regions of it), is known to be more appropriate. The continuous framework is also useful to determine the set of potential facilities that serve as input for a discrete version of the problem. The main difference between the two families of problems is that in the discrete case the costs between the facilities and the users are given as input data (or can be preprocessed before solving the problem), but in the continuous case, the costs/distances are part of the decision and have to be incorporated into the optimization problem. While in the discrete case, Integer Linear Programming tools play an important role when trying to solve the problems, the continuous problems are usually solved using Global Optimization tools.

On the other hand, in many situations when locating facilities, it is desirable that they are interconnected, that is, linked if the distance between them does not exceed a given limit. It is the case of the optimal design of forest fire-fighters centers that have to communicate a central server at a give radius \cite{demaine09} or the location of sensors that have to be connected to each others \cite{romich15}. Some facility location problems have been analyzed under these interconnection lens (see \cite{BPE16} in which continuous multiple-allocation multifacility ordered median location problems are analyzed and the distances between the location of the facilities are considered in the objective function), although interconnection between services have been mostly studied in the context of hub location, in which routing costs induce such a connectivity between the hub nodes (see \cite{HubChapter19} and the references therein). The $p$-median and the $p$-maximal discrete covering location problems with tree-shaped interconnected facilities have been recently studied in \cite{Landete19}. However, the continuous version of the problem, as far as we know, has not been previously studied in the literature. As already mentioned, continuous location problems are useful in case the facilities can be arbitrary located at any position in the given space, and then, mostly applicable in telecommunication networks. It is usual in when deciding the situation of telecommunication servers in order to provide a more accurate service, by sharing capacity with other servers or providing service to customers allocated to another facility in case of failure of the one to which is allocated. 

In this paper we analyze a novel version of the continuous maximal covering location problem, in which we require the facilities to be interconnected (i.e., not exceeding the interconnecting distance) by a given graph structure. This graph has implications in the way the facilities are linked. For instance, assuming that the graph of facilities is complete, implies that all the facilities must be linked between them, and then, in case of failure of a facility, all the others can act as a backup for it. In case the facilities are linked with matching-shape, each facility is connected to a single other facility. In cycle-shaped interconnections, the flow between facilities runs in circle. We explore also other cases as star or ring-star shapes that are useful when one facility serves as a main server linked to all the others.

We propose a general framework for the Maximal Covering Location Problem with Interconnected Facilities (MCLPIF) in a $d$-dimensional space, in which (users-to-facility-users and facility-to-facility) distance/costs are measured by means of $\ell_\tau$ ($\tau\geq 1$) or polyhedral norms, We propose a Mixed Integer Non Linear Programming (MINLP) formulation for the problem and show how it can be reformulated as a Mixed Integer Second Order Cone Optimization (MISOCO) problem that can be solved using any if the available MISOCO solvers (Gurobi, CPLEX, XPRESS, etc.). Then, we provide a decomposition of the problem in which the continuous variables and the nonlinear constraints are projected out, and where the difficulty of the problem turns in solving a pure Integer Linear Programming (ILP) problem by exploiting the geometry of the problem. Although the ILP formulation is a valid formulation for the problem, its implies checking a large number of intersections of sophisticated convex regions in a $d$-dimensional space. Thus, we concentrate on analyzing the problem on the plane and using Euclidean distances to derive a suitable ILP formulation for the problem and two branch and cut approaches for solving it efficiently. 

The rest of the paper is organized as follows. In Section \ref{sec:2} we recall the classical Maximal Covering Location Problem and introduce the Maximal Covering Location Problem with Interconnected Facilities (MCLPIF). There, we provide a MINLP formulation for the MCLPIF and a MISOCO reformulation for it. Section \ref{sec:3} is devoted to the reformulation of the problem as an ILP problem by exploiting the geometry of the problem on the plane and Euclidean norms. In Section \ref{sec:4} we derive two branch-and-cut approaches for solving the MCLPIF by means of relaxing subsets of constraints of the ILP formulation. The results of our computational experience are reported in Section \ref{sec:5} showing the performance of the different approaches proposed for the problem. Finally, we draw some conclusions and point out some further research topics in Section \ref{sec:6}.

\section{Maximal Covering Location Problem with Interconnected Facilities}\label{sec:2}

In this section we introduce the problem under analysis and fix the notation for the rest of the sections. First, we recall the classical version of the continuous maximal covering location problem whose planar version was first introduced by Church~\cite{church1984}.

Let $\A = \{a_1, \ldots, a_n\} \subseteq \R^d$ be a set of demand points in a $d$-dimensional space, a set of weights $\omega_1, \ldots, \omega_n >0 $, $p \in \Z_+$, $R_i\geq 0$ for $i =1, \ldots, n$ and $p \in \Z_+$ with $p\geq 1$. Let $\d: \R^d \times \R^d \rightarrow \R_+$ be a distance measure in $\R^d$. The goal of the (Continuous) Maximal Covering Location Problem (MCLP) introduced by Church and Revelle~\cite{church1974maximal}, is to determine the positions of $p$ new facilities (centers) in $\R^d$, $X_1, \ldots, X_p$ that maximizes the ($\omega$-weighted) number of points covered by the facilities, provided that the demand point $a_i$ is covered if a facility is at most at distance $R_i$ to it, for $i=1, \ldots, n$.

We denote by $N=\{1, \ldots, n\}$ and $P=\{1, \ldots, p\}$ the index sets for the demand points and the centers, respectively. A mathematical programming formulation for the problem can be derived by using the following set of binary variables:
$$
z_{ij} = \left\{\begin{array}{cl} 
1 & \mbox{if $i$ is covered by the $j$-th facility,}\\
0 & \mbox{otherwise}
\end{array}\right. \quad  \text{for all $i\in N$, $j\in P$}.
$$
Concretely, the MCLP can be formulated as:
          \begin{subequations}
    \makeatletter
        \def\@currentlabel{${\rm MCLP}$}
        \makeatother
       \label{MCLP}
        \renewcommand{\theequation}{${\rm MCLP}_{\arabic{equation}}$}
\begin{align}
\max &\dsum_{i \in N} \omega_i \dsum_{j \in P} z_{ij} \nonumber\\
\mbox{s.t. } & \d(X_j, a_i) \leq R_i, \mbox{ if $z_{ij}=1$}, \forall i \in N, j \in P, \label{pmclp:1}\\
& \dsum_{j \in P} z_{ij} \leq 1, \forall i \in N,\label{pmclp:2}\\
& z_{ij} \in \{0,1\}, \forall i\in N, j \in P,\nonumber\\
& X_1, \ldots, X_p \in \R^d.\nonumber
\end{align}
\end{subequations}
where the objective function accounts for the weighted number of covered points, constraints \eqref{pmclp:1} ensure that covered points are those with a center in their coverage radius and constraints \eqref{pmclp:2} enforce that covered demand points are accounted only once in the objective function, even if it can be covered by more than one center. Note that constraint \eqref{pmclp:1} can be equivalently rewritten as:
$$
\d(X_j, a_i) \leq R_i + M(1- z_{ij}), \forall i \in N, j \in P.
$$
for a big enough constant $M> \max_{i,j \in N} \d(a_i, a_j)$.

The above formulation is clearly discrete and non linear, but the nonlinear constraints can be efficiently reformulated as a set of second order cone constraints~(see \cite{BEP14}), resulting in a Mixed Integer Second Order Cone Optimization (MISOCO) problem. However, for the Euclidean planar MCLP, Church\cite{church1984} proved that it is enough to inspect a explicit finite set of potential centers to find the optimal location of the problem. In particular, the so-called Circle Intersection Points (CIP) which consists of the demand points and the pairwise intersection of the balls (disks) centered at the demand points and the corresponding covering radia. Thus, the problem turns into the discrete version of the Maximal Covering Location Problem:
\begin{align}
\max &\dsum_{i \in N} \omega_i y_i\nonumber\\
\mbox{s.t. } & \dsum_{j \in \{1, \ldots, K\}:\atop \d(c_j, a_i) \leq R_i} x_j \geq y_i, \forall i \in N, \nonumber\\
& \dsum_{j \in \{1, \ldots, K\}} x_j = p,\nonumber\\
& y_i \in \{0,1\}, \forall i \in N,\nonumber\\
& x_j \in \{0,1\}, \forall j=1, \ldots, K.\nonumber
\end{align}
where ${\rm CIP}=\{c_1, \ldots, c_k\}$ is the set of Circle Intersection Points, $y_i$ is a binary variable that takes value one if point $a_i$ is covered by a chosen center, and zero otherwise, and $x_j$ is a binary variable taking value one if the point $c_j$ in ${\rm CIP}$ is chosen as a center. 

The above model, although allows a linear representation of the planar MCLP may has a large number of $x$-variables, in case the CIP set is large (in worst case, $O(n^2)$). Another equivalent linear representation of the problem, in case we assume that centers must cover at least one demand point, and that we exploit in this paper, is based on the following straightforward observation where we denote by $\B_R(a) = \{X \in \R^d: \d(a,X)\leq R\}$ the $\d$-ball of radius $R$ centered at $a \in \R^d$.
\begin{lem}\label{lem:1}
Let $C_1, \ldots, C_p \subseteq N$ be $p$ nonempty disjoint subsets of $N$. Then, they induce a solution to the MCLP (where $C_j$ are the points covered by the $j$-th center) if and only if:
$$
\bigcap_{i \in C_j} \B_{R_i}(a_i) \neq \emptyset, \forall j \in P.
$$
\end{lem}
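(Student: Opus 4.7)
The plan is to prove both directions of the equivalence by reducing the geometric condition on ball intersections to the feasibility constraint \eqref{pmclp:1} of the MCLP, exploiting the simple characterization $\d(X_j,a_i)\leq R_i \iff X_j \in \B_{R_i}(a_i)$.

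For the forward implication, I would assume that $C_1,\ldots,C_p$ is induced by a feasible solution $(X_1,\ldots,X_p,z)$ of the MCLP, meaning $C_j = \{i \in N : z_{ij}=1\}$. Then for every $j \in P$ and every $i \in C_j$, constraint \eqref{pmclp:1} forces $\d(X_j,a_i)\leq R_i$, i.e., $X_j \in \B_{R_i}(a_i)$. Consequently $X_j \in \bigcap_{i \in C_j}\B_{R_i}(a_i)$, which immediately shows that this intersection is nonempty (it contains $X_j$).

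For the converse, I would assume $\bigcap_{i \in C_j}\B_{R_i}(a_i)\neq \emptyset$ for every $j\in P$, pick any point $X_j$ in each such intersection, and set $z_{ij}=1$ if $i\in C_j$ and $z_{ij}=0$ otherwise. By construction $\d(X_j,a_i)\leq R_i$ whenever $z_{ij}=1$, so \eqref{pmclp:1} holds; and since the sets $C_1,\ldots,C_p$ are pairwise disjoint, each $i\in N$ belongs to at most one $C_j$, hence $\sum_{j\in P} z_{ij}\leq 1$, which is exactly \eqref{pmclp:2}. Thus $(X_1,\ldots,X_p,z)$ is feasible for the MCLP and induces the partition $(C_1,\ldots,C_p)$, as required.

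There is no real obstacle here: the whole argument is a tautological rewriting of \eqref{pmclp:1} in terms of balls, together with the observation that disjointness of the $C_j$ is precisely what \eqref{pmclp:2} demands. The only mild subtlety worth flagging in the write-up is that the lemma only asserts an equivalence at the level of \emph{which} points are covered (feasibility), not a statement about optimality, so the proof does not need to address the objective function; and the nonemptiness of each $C_j$ is consistent with the lemma's hypothesis but is not strictly needed for the equivalence itself.
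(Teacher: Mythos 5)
Your proof is correct and is essentially the argument the paper intends: the lemma is presented there as a ``straightforward observation'' with no written proof, and your two-directional verification (rewriting \eqref{pmclp:1} as $X_j\in\B_{R_i}(a_i)$, and using disjointness of the $C_j$ for \eqref{pmclp:2}) is exactly the routine check being elided. Your closing remarks on feasibility versus optimality and on the role of nonemptiness are accurate and do not affect the argument.
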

The above result allows us to rewrite constraints \eqref{pmclp:1} as linear constraints and formulate the MCLP as:
\begin{align}
\max &\dsum_{i \in N} \omega_i \dsum_{j \in P} z_{ij} \nonumber\\
\mbox{s.t. } & \dsum_{i \in S} z_{ij} \leq |S|-1, \forall j \in P \text{ and } S \subseteq N: \bigcap_{i \in S} \B_{R_i}(a_i) = \emptyset, \label{pmclp:5}\\
& \dsum_{j \in P} z_{ij} \leq 1, \forall i \in N,\nonumber\\
& \dsum_{i \in N} z_{ij} \geq 1, \forall j \in P,\nonumber\\
& z_{ij} \in \{0,1\}, \forall i\in N, j \in P,\nonumber
\end{align}
where \eqref{pmclp:5} enforces that the set of points covered by a center must verify the condition of Lemma \ref{lem:1}. Once the solution of the problem above is obtained, $z^*$, explicit coordinates of the centers, can be found in the following sets:
$$
X_j \in \bigcap_{i \in N:\atop z^*_{ij}=1} \B_{R_i}(a_i), \forall j \in P,
$$
which can be formulated as a convex feasibility problem:
$$
\begin{aligned}[t]
\min \;\; & 0\\
\mbox{s.t. } &  \d(X_j,a_i) \leq R_i, &\forall i \in N: z^*_{ij}=1, \label{pmclp:6}\\
& X_j \in \R^d.&
\end{aligned}
$$
for all $j\in P$. 

The convex problem above can be efficiently handled for the most common distance measures, in particular for the Euclidean norm and the $\ell_1$ or $\ell_\infty$-norms, but also for any $\ell_\tau$-norm with $\tau$ (see \cite{BEP14} for the specific reformulation of constraints \eqref{pmclp:6} as Second Order Cone constraints). Thus, for this wide family of distance measures, and once the discrete part of the problem is solved, the coordinates of the centers can be solved in polynomial-time by using interior-point techniques~\cite{NN94}.

For the sake of simplicity, we assume in this paper that $\d$ is the Euclidean distance, although the analysis can be adapted for any other $\ell_\tau$-distance with $\tau \geq 1$.

\vspace*{0.5cm}

{\bf The MCLP with Interconnected Facilities}

\vspace*{0.5cm}

Apart from the input data provided for the MCLP, in the Maximal Covering Location Problem with Interconnected Facilities (MCLPIF), links between facilities are to be constructed following the specification of a graph structure. This extra requirement is useful in many situations in which communication between facilities is desirable, as for instance, in the design of telecommunication networks in which the servers to be located have a restricted coverage area to cover the users, but at the same time, they have to communicate other facilities with certain coverage radius.

We denote by $G$ the undirected complete graph with nodes $P=\{1, \ldots, p\}$ and $\S(G)$ certain undirected spanning subgraph of $G$. This subgraph structure is flexible enough and must include the particularities of the network structure for the facilities that the decision maker desires to impose to the solution of the problem. 

With the notation above, the goals of (MCLPIF) are:
\begin{itemize}
\item To determine the positions of $p$ facilities, $X_1, \ldots, X_p \in \R^d$ that maximize the weighted covered points such that the coverage area of the facilities is determined by the radia $R_i$, for $i\in N$, and
\item To decide the activated links between the new facilities following the particularities of the structure $\S(G)$, provided that two facilities can be interconnected if its distance, $\d$, between them does not exceed the radius $r\geq 0$.
\end{itemize}

\begin{lem}
The MCLPIF is NP-hard.
\end{lem}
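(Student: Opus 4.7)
The plan is a reduction from the classical continuous maximal covering location problem (MCLP), which is already NP-hard (even in the planar Euclidean setting, via a reduction from Dominating Set on unit disk graphs, see the line of hardness results for planar location problems initiated by Megiddo and Supowit). Since MCLP is formally the special case of MCLPIF in which the interconnection requirement is inactive, it suffices to exhibit an instance-preserving construction that renders the interconnection constraints vacuous.

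Concretely, given an MCLP instance with data $\A$, $\omega$, $R$, $p$, I would build the MCLPIF instance with the same data, any fixed choice of spanning subgraph $\S(G)$ (for instance the edgeless one if the model permits it, otherwise the complete graph), and interconnection radius $r$ chosen larger than the diameter of the smallest axis-aligned box containing $\bigcup_{i\in N}\B_{R_i}(a_i)$. Any facility that covers at least one demand point lies inside this bounded region, and any facility covering none may be relocated inside it without changing the objective value; hence in every optimal solution the pairwise distances $\d(X_j,X_k)$ are bounded by $r$, so the interconnection constraints have no effect. The reduction is polynomial and preserves optimal covering partitions, so NP-hardness of MCLPIF follows from NP-hardness of MCLP.

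The main obstacle is supplying a clean argument for NP-hardness of the underlying continuous MCLP. If a direct citation is not at hand, I would give a short reduction from Dominating Set: place demand points at the vertex coordinates of a unit disk realization of the input graph, set $R_i \equiv 1$, and use Lemma \ref{lem:1} to translate ``$p$ facilities covering $k$ demand points'' into ``$p$ vertices whose closed neighborhoods contain $k$ vertices''. The care required is only in choosing the demand point coordinates in sufficiently generic position so that the intersection pattern of the balls $\B_{R_i}(a_i)$ faithfully mirrors the unit disk adjacency relation; once this is in place, combining the two reductions completes the proof.
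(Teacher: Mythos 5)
Your argument is essentially the paper's: MCLPIF specializes to the classical MCLP once the interconnection constraints are made vacuous (an edgeless spanning subgraph, or $r$ taken large enough that any link is admissible), so NP-hardness follows from the known NP-hardness of the planar Euclidean MCLP, which the paper obtains by citing Church's reduction to a discrete MCLP together with the hardness result of Megiddo, Zemel and Hakimi. One caveat on your backup sketch: a continuous facility placed at $X$ covers exactly the index set $S=\{i: X\in \B_{R_i}(a_i)\}$, and the achievable sets $S$ with $\bigcap_{i\in S}\B_{R_i}(a_i)\neq\emptyset$ need not coincide with closed neighborhoods of a unit disk graph, so the Dominating Set reduction requires more than ``generic position''; since your primary route is the citation, as in the paper, this does not affect the proof.
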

\begin{proof}
The MCLPIF reduces to the MCLP in case $\S(G)$ is a $0$-connected subgraph or $\S(G)$ is any spanning subgraph and $r$ big enough, i.e., any link is possible between the facilities. Thus, by \cite{church1984}, the problem reduces to a discrete MCLP which is known to be NP-hard~\cite{megiddo83}.
\end{proof}
The structure of the spanning subgraph that wants to be constructed is assumed to be given by the decision maker. In \cite{Landete19} the authors consider a tree structure for the facilities since it is the least costly connected spanning structure on $G$ and the cost is not considered in the formulation. Here we provide a general framework for different graph structures, although we will concentrate in those that can be directly imposed to the model, as we will show later.
                                                                                                           
To provide a suitable mathematical programming formulation for the MCLPIF, apart from the decisions on the demand points that are covered by the facilities, i.e., the $z$-variables used in the previous models, we use the following set of binary variables concerning the activated links between facilities:                                                                                           
$$
x_{jk} = \left\{\begin{array}{cl} 
1 & \mbox{if  $k$ and $j$ are linked in the facilities graph $\S(G)$,}\\
0 & \mbox{otherwise}
\end{array}\right. \mbox{ for all $j, k \in P$}.
$$

Thus, with this set of variables, the MCLPIF can be formulated as the following Mixed Integer Non Linear Programming problem, that we call \eqref{NL}:
          \begin{subequations}
    \makeatletter
        \def\@currentlabel{${\rm NL}$}
        \makeatother
       \label{NL}
        \renewcommand{\theequation}{${\rm MINL}_{\arabic{equation}}$}
\begin{align}
	\max & \dsum_{i\in N}  \omega_i \dsum_{j \in P} z_{ij} \nonumber\\
	\mbox{s.t. } & \dsum_{j \in P} z_{ij} \leq 1, \forall i \in N, \nonumber\\
 			 & \d(X_j, a_i) \leq R_i, \mbox{ if $z_{ij}=1$}, \forall i \in N, j \in P, \nonumber\\
			 & \d(X_j, X_k) \leq r, \mbox{ if $x_{jk}=1$}, \forall j, k \in P, \label{pmclp:8}\\
			& x \in \mathcal{S}(G),\label{c1:4}\\
			& z_{ij}\in \{0,1\}, \forall i \in N, j \in P,\nonumber\\
			& x_{jk}\in \{0,1\}, \forall j, k \in P, j<k, \nonumber\\
			& X_j \in \R^2, \forall j \in P.\nonumber
\end{align}
    \end{subequations}
In this case, new decisions are incorporated to those of the MCLP. In particular, constraint \eqref{pmclp:8} ensures that facilities are allowed to be linked only in case the distance between them is smaller or equal to $r$; and, abusing of notation, in \eqref{c1:4} we incorporate all the constraints ensuring the desired properties of the spanning subgraph of facilities $\S(G)$. As in the case of \eqref{pmclp:1}, constraint \eqref{pmclp:8} can be equivalently rewritten as:
$$
\d(X_j, X_k) \leq r + M(1- x_{jk}), \forall j, k \in P.
$$
for a big enough constant $M$.

\subsection{Spanning Subgraph of Facilities}\label{subgraphs}

The subgraph $\S(G)$ is assumed to reflect the requirements imposed by the decision maker on the facilities network. Several options are possible when designing such a network. Since the goal of this paper is to provide a general framework for the maximal covering location problem with interconnected facilities, we will concentrate on network structures that can be imposed to the model, through constraint \eqref{c1:4}, by directly fixing the values of the $x$-variables. However, at the end of this section we will describe other possibilities that can be considered.

In what follows we describe some graph structures that can be incorporated to the model just by fixing the values of the $x$-variables. Observe that in a continuous location problem, as the MCLPIF, the \textit{labels} $\{1, \ldots, p\}$ given to the facilities are arbitrary and just allows to group the users, but any permutation of the labels is also a solution of the problem. Taking into account this consideration we consider the following six spanning graph structures:

\begin{itemize}
\item {\bf Complete}. The case of a complete graph is easy to model just by fixing the values of all the $x$-values to one. Since facilities must simultaneously cover at least one point and be linked with the rest of facilities, the problem may be infeasible. 

\item {\bf Cycle}. For cycles, since the indices of the facilities have no implications on the demand points (once a solution is computed, a permutation of the indices in the facilities is also an optimal solution), the cycle property can be imposed as:
$$
x_{jk} = \left\{\begin{array}{cl} 1 & \mbox{if ($k=j+1$) or ($j=1$ and $k=p$)},\\
0 & \mbox{otherwise}.
\end{array}\right.
$$
\item {\bf Line}. Similarly to cycles, one can impose a line just by setting:
$$
x_{jk} = \left\{\begin{array}{cl} 1 & \mbox{if $k=j+1$},\\
0 & \mbox{otherwise}.
\end{array}\right.
$$
\item {\bf Star}. The star shape on the sets of nodes $\{1, \ldots, p\}$ can be enforced by fixing the central node of the star to $1$ and the links as:
$$
x_{jk} = \left\{\begin{array}{cl} 1 & \mbox{if $j=1$ and $k\neq 1$},\\
0 & \mbox{otherwise}.
\end{array}\right.
$$
\item {\bf Ring-Star}. The structure of a ring star graph on $\{1, \ldots, p\}$ can incorporated, similarly to the star as:
$$
x_{jk} = \left\{\begin{array}{cl} 1 & \mbox{if ($j=1$ and $k\neq 1$) or ($j\neq 1$ and $k=j+1$)},\\
0 & \mbox{otherwise}.
\end{array}\right.
$$
\item {\bf Perfect Matching}. A perfect matching on $\{1, \ldots, p\}$ ($p$ even) is just a pairwise group of the vertices, and then, in our case, can be imposed as:
$$
x_{j k} = \left\{\begin{array}{cl} 1 & \mbox{if $j$ is odd and $k=j+1$},\\
0 & \mbox{otherwise}.
\end{array}\right.
$$
\end{itemize}

In Figure \ref{graphs} we show the shapes of spanning subgraphs described above.

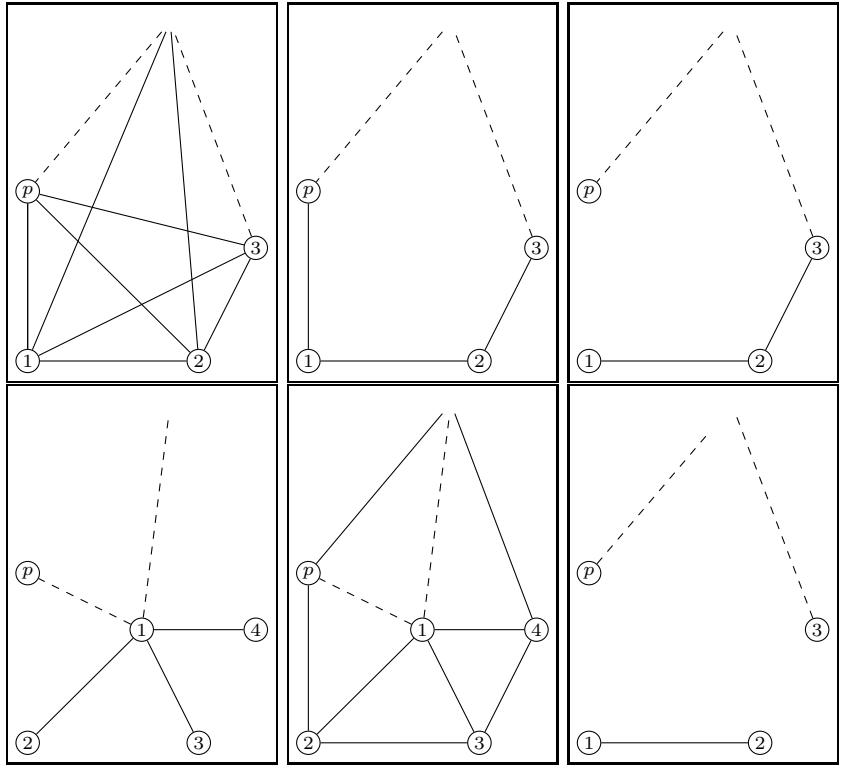
\begin{figure}[h]
\begin{center}
\fbox{\begin{tikzpicture}[scale=1.5]

  \node[draw, circle,inner sep=1pt](A) at (0,0) {\tiny $1$};
    \node[draw, circle,inner sep=1pt](B) at (1.5,0) {\tiny $2$};
      \node[draw, circle,inner sep=1pt](C) at (2,1) {\tiny $3$};
      \node at (1.25,3)(D) {};

          \node[draw, circle,inner sep=1pt](F) at (0,1.5) {\tiny $p$};
  \draw (A)--(B);
  \draw (A)--(C);
  \draw (A)--(D);
   \draw (A)--(F);
   \draw (B)--(C);
   \draw (B)--(D);
   \draw (B)--(F);
   \draw (C)--(F);
   \draw[dashed] (C)--(D);
   \draw[dashed] (D)--(F);
\draw (F)--(A);

\end{tikzpicture}}~\fbox{\begin{tikzpicture}[scale=1.5]

  \node[draw, circle,inner sep=1pt](A) at (0,0) {\tiny $1$};
    \node[draw, circle,inner sep=1pt](B) at (1.5,0) {\tiny $2$};
      \node[draw, circle,inner sep=1pt](C) at (2,1) {\tiny $3$};
      \node at (1.25,3)(D) {};

          \node[draw, circle,inner sep=1pt](F) at (0,1.5) {\tiny $p$};
  \draw (A)--(B);
        \draw (B)--(C);
            \draw[dashed] (C)--(D);
                        \draw[dashed] (D)--(F);
\draw (F)--(A);

\end{tikzpicture}}~\fbox{\begin{tikzpicture}[scale=1.5]

  \node[draw, circle,inner sep=1pt](A) at (0,0) {\tiny $1$};
    \node[draw, circle,inner sep=1pt](B) at (1.5,0) {\tiny $2$};
      \node[draw, circle,inner sep=1pt](C) at (2,1) {\tiny $3$};
      \node at (1.25,3)(D) {};

          \node[draw, circle,inner sep=1pt](F) at (0,1.5) {\tiny $p$};
  \draw (A)--(B);
        \draw (B)--(C);
            \draw[dashed] (C)--(D);
                        \draw[dashed] (D)--(F);

\end{tikzpicture}}

\fbox{\begin{tikzpicture}[scale=1.5]

  \node[draw, circle,inner sep=1pt](0) at (1,1) {\tiny $1$};
  \node[draw, circle,inner sep=1pt](A) at (0,0) {\tiny $2$};
    \node[draw, circle,inner sep=1pt](B) at (1.5,0) {\tiny $3$};
      \node[draw, circle,inner sep=1pt](C) at (2,1) {\tiny $4$};
      \node at (1.25,3)(D) {};

          \node[draw, circle,inner sep=1pt](F) at (0,1.5) {\tiny $p$};
           \draw (0)--(A);
  \draw (0)--(B);
        \draw (0)--(C);
            \draw[dashed] (0)--(D);
                        \draw[dashed] (0)--(F);

\end{tikzpicture}}~\fbox{\begin{tikzpicture}[scale=1.5]

  \node[draw, circle,inner sep=1pt](0) at (1,1) {\tiny $1$};
  \node[draw, circle,inner sep=1pt](A) at (0,0) {\tiny $2$};
    \node[draw, circle,inner sep=1pt](B) at (1.5,0) {\tiny $3$};
      \node[draw, circle,inner sep=1pt](C) at (2,1) {\tiny $4$};
      \node at (1.25,3)(D) {};
      \node at (1.1,2.8)(D1) {};

          \node[draw, circle,inner sep=1pt](F) at (0,1.5) {\tiny $p$};
           \draw (0)--(A);
  \draw (0)--(B);
        \draw (0)--(C);
            \draw[dashed] (0)--(D);
                        \draw[dashed] (0)--(F);
  \draw (A)--(B);
        \draw (B)--(C);
            \draw (C)--(D);
                        \draw (D)--(F);
 \draw (A)--(F);
\end{tikzpicture}}~\fbox{\begin{tikzpicture}[scale=1.5]

  \node[draw, circle,inner sep=1pt](A) at (0,0) {\tiny $1$};
    \node[draw, circle,inner sep=1pt](B) at (1.5,0) {\tiny $2$};
      \node[draw, circle,inner sep=1pt](C) at (2,1) {\tiny $3$};
      \node at (1.25,3)(D) {};

          \node[draw, circle,inner sep=1pt](F) at (0,1.5) {\tiny $p$};
  \draw (A)--(B);
            \draw[dashed] (C)--(D);
                        \draw[dashed] (D1)--(F);

\end{tikzpicture}}
\end{center}
\caption{Different shapes for the graphs considered. From top left to bottom right: Complete, Cycle, Line, Star, Ring-Star and Matching.\label{graphs}}
\end{figure}

\begin{example}\label{ex:1}
We consider a set of $15$ demand points on the plane, $\A=\{(0.34, 0.59)$, $(0.13, 0.9 ),  (0.67, 0.53),  (0.41, 0.03),  (0.36, 0.2 ),  (0.09, 0.1 ),  (0.29, 1.  ),  (0.68, 0.56), (0.08, 0.5 )$,\\ $(0.86, 0.71),  (0.66, 0.63), (0.87, 0.05),  (0.22, 0.44),  (0.22, 0.11),  (0.11, 0.53)\}$, $R_i=0.1$, for all $i=1, \ldots, 15$, $r=0.3$, $\omega_i=1$, for all $i$ and $\d$ the Euclidean distance on the plane. The solution to the classical MCLP for $p=6$ is drawn in Figure \ref{fig:mclp}. On the other hand, the solutions for the MCLPIF for the same number of facilities and shapes (Complete, Cycle, Line, Star, Ring-Star and Matching) are drawn in Figure \ref{fig:mclpif}. Demand points are drawn as circles while optimal facilities are drawn as asterisks. Segments between facilities represents links between them in the solution.

\begin{figure}[h]
\begin{center}
\fbox{
\begin{tikzpicture}[scale=1.5]

\begin{axis}[
hide x axis,
hide y axis,
axis equal,
]
\draw[draw=white!50.19607843137255!black,fill=white!50.19607843137255!black,opacity=0.2] (axis cs:0.280371354389903,0.514673321314653) circle (0.1);
\draw[draw=white!50.19607843137255!black,fill=white!50.19607843137255!black,opacity=0.2] (axis cs:0.386774725683932,0.11609031562243) circle (0.1);
\draw[draw=white!50.19607843137255!black,fill=white!50.19607843137255!black,opacity=0.2] (axis cs:0.211511196081541,0.9479501565164) circle (0.1);
\draw[draw=white!50.19607843137255!black,fill=white!50.19607843137255!black,opacity=0.2] (axis cs:0.663245268829768,0.573915554172247) circle (0.1);
\draw[draw=white!50.19607843137255!black,fill=white!50.19607843137255!black,opacity=0.2] (axis cs:0.157274650022056,0.114198708580954) circle (0.1);
\draw[draw=white!50.19607843137255!black,fill=white!50.19607843137255!black,opacity=0.2] (axis cs:0.105794816947374,0.511440295776329) circle (0.1);
\addplot [semithick, black, mark=asterisk, mark size=2, mark options={solid}, only marks]
table {%
0.280371354389903 0.514673321314653
};
\addplot [semithick, black, mark=*, mark size=1, mark options={solid}, only marks]
table {%
0.34 0.59
};
\addplot [semithick, black, mark=*, mark size=1, mark options={solid}, only marks]
table {%
0.22 0.44
};
\addplot [semithick, black, mark=asterisk, mark size=2, mark options={solid}, only marks]
table {%
0.386774725683932 0.11609031562243
};
\addplot [semithick, black, mark=*, mark size=1, mark options={solid}, only marks]
table {%
0.41 0.03
};
\addplot [semithick, black, mark=*, mark size=1, mark options={solid}, only marks]
table {%
0.36 0.2
};
\addplot [semithick, black, mark=asterisk, mark size=2, mark options={solid}, only marks]
table {%
0.211511196081541 0.9479501565164
};
\addplot [semithick, black, mark=*, mark size=1, mark options={solid}, only marks]
table {%
0.13 0.9
};
\addplot [semithick, black, mark=*, mark size=1, mark options={solid}, only marks]
table {%
0.29 1
};
\addplot [semithick, black, mark=asterisk, mark size=2, mark options={solid}, only marks]
table {%
0.663245268829768 0.573915554172247
};
\addplot [semithick, black, mark=*, mark size=1, mark options={solid}, only marks]
table {%
0.67 0.53
};
\addplot [semithick, black, mark=*, mark size=1, mark options={solid}, only marks]
table {%
0.68 0.56
};
\addplot [semithick, black, mark=*, mark size=1, mark options={solid}, only marks]
table {%
0.66 0.63
};
\addplot [semithick, black, mark=asterisk, mark size=2, mark options={solid}, only marks]
table {%
0.157274650022056 0.114198708580954
};
\addplot [semithick, black, mark=*, mark size=1, mark options={solid}, only marks]
table {%
0.09 0.1
};
\addplot [semithick, black, mark=*, mark size=1, mark options={solid}, only marks]
table {%
0.22 0.11
};
\addplot [semithick, black, mark=asterisk, mark size=2, mark options={solid}, only marks]
table {%
0.105794816947374 0.511440295776329
};
\addplot [semithick, black, mark=*, mark size=1, mark options={solid}, only marks]
table {%
0.08 0.5
};
\addplot [semithick, black, mark=*, mark size=1, mark options={solid}, only marks]
table {%
0.11 0.53
};
\addplot [semithick, black, mark=*, mark size=1, mark options={solid}, only marks]
table {%
0.86 0.71
};
\addplot [semithick, black, mark=*, mark size=1, mark options={solid}, only marks]
table {%
0.87 0.05
};

\end{axis}

\end{tikzpicture}}
\caption{Solution to the MCLP for the data of Example \ref{ex:1}.\label{fig:mclp}}
\end{center}
\end{figure}
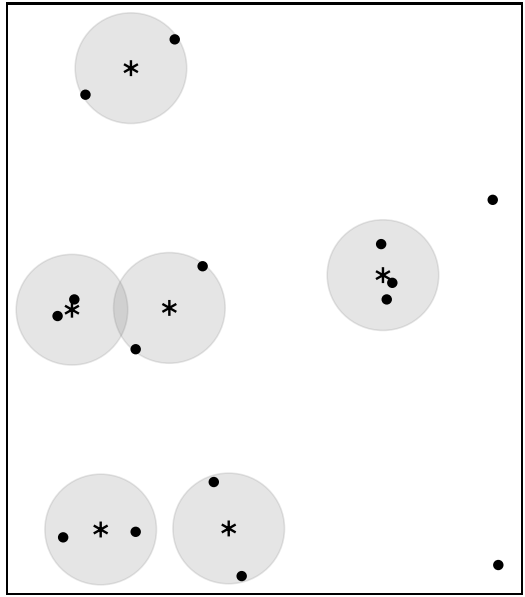

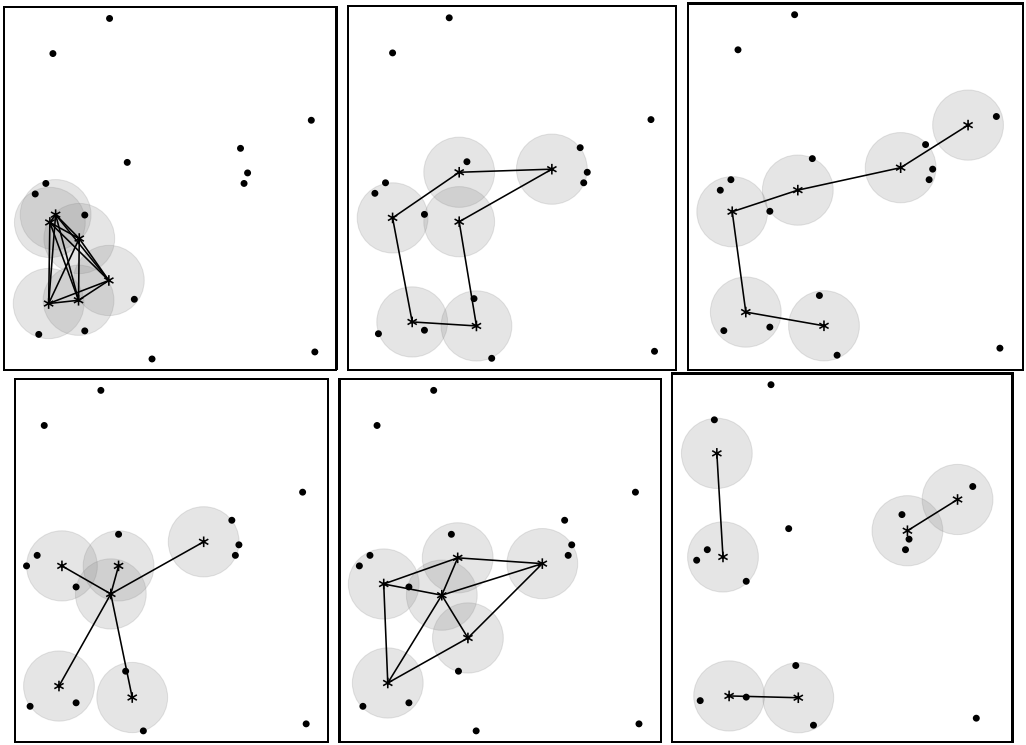
\begin{figure}[h]
\begin{center}
\fbox{\begin{tikzpicture}

\begin{axis}[
hide x axis,
hide y axis,
axis equal,
width=7cm,
height=7cm,
]
\draw[draw=white!50.19607843137255!black,fill=white!50.19607843137255!black,opacity=0.2] (axis cs:0.287990239341693,0.253638988832738) circle (0.1);
\draw[draw=white!50.19607843137255!black,fill=white!50.19607843137255!black,opacity=0.2] (axis cs:0.204422497693462,0.373070179911043) circle (0.1);
\draw[draw=white!50.19607843137255!black,fill=white!50.19607843137255!black,opacity=0.2] (axis cs:0.137635121007803,0.441071274100664) circle (0.1);
\draw[draw=white!50.19607843137255!black,fill=white!50.19607843137255!black,opacity=0.2] (axis cs:0.117942346607779,0.187884376091311) circle (0.1);
\draw[draw=white!50.19607843137255!black,fill=white!50.19607843137255!black,opacity=0.2] (axis cs:0.202378763151636,0.197197510440532) circle (0.1);
\draw[draw=white!50.19607843137255!black,fill=white!50.19607843137255!black,opacity=0.2] (axis cs:0.121352294779516,0.418887756819755) circle (0.1);
\addplot [semithick, black, mark=asterisk, mark size=2, mark options={solid}, only marks]
table {%
0.287990239341693 0.253638988832738
};
\addplot [semithick, black, mark=*, mark size=1, mark options={solid}, only marks]
table {%
0.36 0.2
};
\addplot [semithick, black, mark=asterisk, mark size=2, mark options={solid}, only marks]
table {%
0.204422497693462 0.373070179911043
};
\addplot [semithick, black, mark=*, mark size=1, mark options={solid}, only marks]
table {%
0.22 0.44
};
\addplot [semithick, black, mark=asterisk, mark size=2, mark options={solid}, only marks]
table {%
0.137635121007803 0.441071274100664
};
\addplot [semithick, black, mark=*, mark size=1, mark options={solid}, only marks]
table {%
0.11 0.53
};
\addplot [semithick, black, mark=asterisk, mark size=2, mark options={solid}, only marks]
table {%
0.117942346607779 0.187884376091311
};
\addplot [semithick, black, mark=*, mark size=1, mark options={solid}, only marks]
table {%
0.09 0.1
};
\addplot [semithick, black, mark=asterisk, mark size=2, mark options={solid}, only marks]
table {%
0.202378763151636 0.197197510440532
};
\addplot [semithick, black, mark=*, mark size=1, mark options={solid}, only marks]
table {%
0.22 0.11
};
\addplot [semithick, black, mark=asterisk, mark size=2, mark options={solid}, only marks]
table {%
0.121352294779516 0.418887756819755
};
\addplot [semithick, black, mark=*, mark size=1, mark options={solid}, only marks]
table {%
0.08 0.5
};
\addplot [semithick, black, mark=*, mark size=1, mark options={solid}, only marks]
table {%
0.34 0.59
};
\addplot [semithick, black, mark=*, mark size=1, mark options={solid}, only marks]
table {%
0.13 0.9
};
\addplot [semithick, black, mark=*, mark size=1, mark options={solid}, only marks]
table {%
0.67 0.53
};
\addplot [semithick, black, mark=*, mark size=1, mark options={solid}, only marks]
table {%
0.41 0.03
};
\addplot [semithick, black, mark=*, mark size=1, mark options={solid}, only marks]
table {%
0.29 1
};
\addplot [semithick, black, mark=*, mark size=1, mark options={solid}, only marks]
table {%
0.68 0.56
};
\addplot [semithick, black, mark=*, mark size=1, mark options={solid}, only marks]
table {%
0.86 0.71
};
\addplot [semithick, black, mark=*, mark size=1, mark options={solid}, only marks]
table {%
0.66 0.63
};
\addplot [semithick, black, mark=*, mark size=1, mark options={solid}, only marks]
table {%
0.87 0.05
};
\addplot [semithick, black]
table {%
0.287990239341693 0.253638988832738
0.204422497693462 0.373070179911043
};
\addplot [semithick, black]
table {%
0.287990239341693 0.253638988832738
0.137635121007803 0.441071274100664
};
\addplot [semithick, black]
table {%
0.287990239341693 0.253638988832738
0.117942346607779 0.187884376091311
};
\addplot [semithick, black]
table {%
0.287990239341693 0.253638988832738
0.202378763151636 0.197197510440532
};
\addplot [semithick, black]
table {%
0.287990239341693 0.253638988832738
0.121352294779516 0.418887756819755
};
\addplot [semithick, black]
table {%
0.204422497693462 0.373070179911043
0.137635121007803 0.441071274100664
};
\addplot [semithick, black]
table {%
0.204422497693462 0.373070179911043
0.117942346607779 0.187884376091311
};
\addplot [semithick, black]
table {%
0.204422497693462 0.373070179911043
0.202378763151636 0.197197510440532
};
\addplot [semithick, black]
table {%
0.204422497693462 0.373070179911043
0.121352294779516 0.418887756819755
};
\addplot [semithick, black]
table {%
0.137635121007803 0.441071274100664
0.117942346607779 0.187884376091311
};
\addplot [semithick, black]
table {%
0.137635121007803 0.441071274100664
0.202378763151636 0.197197510440532
};
\addplot [semithick, black]
table {%
0.137635121007803 0.441071274100664
0.121352294779516 0.418887756819755
};
\addplot [semithick, black]
table {%
0.117942346607779 0.187884376091311
0.202378763151636 0.197197510440532
};
\addplot [semithick, black]
table {%
0.117942346607779 0.187884376091311
0.121352294779516 0.418887756819755
};
\addplot [semithick, black]
table {%
0.202378763151636 0.197197510440532
0.121352294779516 0.418887756819755
};
\end{axis}

\end{tikzpicture}}~\fbox{\begin{tikzpicture}

\begin{axis}[
hide x axis,
hide y axis,
axis equal,
width=7cm,
height=7cm,
]
\draw[draw=white!50.19607843137255!black,fill=white!50.19607843137255!black,opacity=0.2] (axis cs:0.185393920141695,0.133619705186065) circle (0.1);
\draw[draw=white!50.19607843137255!black,fill=white!50.19607843137255!black,opacity=0.2] (axis cs:0.366928114184115,0.122193235918209) circle (0.1);
\draw[draw=white!50.19607843137255!black,fill=white!50.19607843137255!black,opacity=0.2] (axis cs:0.318169906025243,0.419120270956092) circle (0.1);
\draw[draw=white!50.19607843137255!black,fill=white!50.19607843137255!black,opacity=0.2] (axis cs:0.579999999999703,0.568786566428223) circle (0.1);
\draw[draw=white!50.19607843137255!black,fill=white!50.19607843137255!black,opacity=0.2] (axis cs:0.318169906025243,0.56) circle (0.1);
\draw[draw=white!50.19607843137255!black,fill=white!50.19607843137255!black,opacity=0.2] (axis cs:0.13,0.43) circle (0.1);
\addplot [semithick, black, mark=asterisk, mark size=2, mark options={solid}, only marks]
table {%
0.185393920141695 0.133619705186065
};
\addplot [semithick, black, mark=*, mark size=1, mark options={solid}, only marks]
table {%
0.09 0.1
};
\addplot [semithick, black, mark=*, mark size=1, mark options={solid}, only marks]
table {%
0.22 0.11
};
\addplot [semithick, black, mark=asterisk, mark size=2, mark options={solid}, only marks]
table {%
0.366928114184115 0.122193235918209
};
\addplot [semithick, black, mark=*, mark size=1, mark options={solid}, only marks]
table {%
0.41 0.03
};
\addplot [semithick, black, mark=*, mark size=1, mark options={solid}, only marks]
table {%
0.36 0.2
};
\addplot [semithick, black, mark=asterisk, mark size=2, mark options={solid}, only marks]
table {%
0.318169906025243 0.419120270956092
};
\addplot [semithick, black, mark=*, mark size=1, mark options={solid}, only marks]
table {%
0.22 0.44
};
\addplot [semithick, black, mark=asterisk, mark size=2, mark options={solid}, only marks]
table {%
0.579999999999703 0.568786566428223
};
\addplot [semithick, black, mark=*, mark size=1, mark options={solid}, only marks]
table {%
0.67 0.53
};
\addplot [semithick, black, mark=*, mark size=1, mark options={solid}, only marks]
table {%
0.68 0.56
};
\addplot [semithick, black, mark=*, mark size=1, mark options={solid}, only marks]
table {%
0.66 0.63
};
\addplot [semithick, black, mark=asterisk, mark size=2, mark options={solid}, only marks]
table {%
0.318169906025243 0.56
};
\addplot [semithick, black, mark=*, mark size=1, mark options={solid}, only marks]
table {%
0.34 0.59
};
\addplot [semithick, black, mark=asterisk, mark size=2, mark options={solid}, only marks]
table {%
0.13 0.43
};
\addplot [semithick, black, mark=*, mark size=1, mark options={solid}, only marks]
table {%
0.08 0.5
};
\addplot [semithick, black, mark=*, mark size=1, mark options={solid}, only marks]
table {%
0.11 0.53
};
\addplot [semithick, black, mark=*, mark size=1, mark options={solid}, only marks]
table {%
0.13 0.9
};
\addplot [semithick, black, mark=*, mark size=1, mark options={solid}, only marks]
table {%
0.29 1
};
\addplot [semithick, black, mark=*, mark size=1, mark options={solid}, only marks]
table {%
0.86 0.71
};
\addplot [semithick, black, mark=*, mark size=1, mark options={solid}, only marks]
table {%
0.87 0.05
};
\addplot [semithick, black]
table {%
0.185393920141695 0.133619705186065
0.366928114184115 0.122193235918209
};
\addplot [semithick, black]
table {%
0.185393920141695 0.133619705186065
0.13 0.43
};
\addplot [semithick, black]
table {%
0.366928114184115 0.122193235918209
0.318169906025243 0.419120270956092
};
\addplot [semithick, black]
table {%
0.318169906025243 0.419120270956092
0.579999999999703 0.568786566428223
};
\addplot [semithick, black]
table {%
0.579999999999703 0.568786566428223
0.318169906025243 0.56
};
\addplot [semithick, black]
table {%
0.318169906025243 0.56
0.13 0.43
};
\end{axis}

\end{tikzpicture}}~\fbox{\begin{tikzpicture}

\begin{axis}[
hide x axis,
hide y axis,
axis equal,
width=7cm,
height=7cm,
]
\draw[draw=white!50.19607843137255!black,fill=white!50.19607843137255!black,opacity=0.2] (axis cs:0.372935989066296,0.113762036758821) circle (0.1);
\draw[draw=white!50.19607843137255!black,fill=white!50.19607843137255!black,opacity=0.2] (axis cs:0.152042523483557,0.152835343798881) circle (0.1);
\draw[draw=white!50.19607843137255!black,fill=white!50.19607843137255!black,opacity=0.2] (axis cs:0.113689478087571,0.438698063373599) circle (0.1);
\draw[draw=white!50.19607843137255!black,fill=white!50.19607843137255!black,opacity=0.2] (axis cs:0.299020628582345,0.500240274138384) circle (0.1);
\draw[draw=white!50.19607843137255!black,fill=white!50.19607843137255!black,opacity=0.2] (axis cs:0.589599427934272,0.564212615403383) circle (0.1);
\draw[draw=white!50.19607843137255!black,fill=white!50.19607843137255!black,opacity=0.2] (axis cs:0.78000952823604,0.685483324561502) circle (0.1);
\addplot [semithick, black, mark=asterisk, mark size=2, mark options={solid}, only marks]
table {%
0.372935989066296 0.113762036758821
};
\addplot [semithick, black, mark=*, mark size=1, mark options={solid}, only marks]
table {%
0.41 0.03
};
\addplot [semithick, black, mark=*, mark size=1, mark options={solid}, only marks]
table {%
0.36 0.2
};
\addplot [semithick, black, mark=asterisk, mark size=2, mark options={solid}, only marks]
table {%
0.152042523483557 0.152835343798881
};
\addplot [semithick, black, mark=*, mark size=1, mark options={solid}, only marks]
table {%
0.09 0.1
};
\addplot [semithick, black, mark=*, mark size=1, mark options={solid}, only marks]
table {%
0.22 0.11
};
\addplot [semithick, black, mark=asterisk, mark size=2, mark options={solid}, only marks]
table {%
0.113689478087571 0.438698063373599
};
\addplot [semithick, black, mark=*, mark size=1, mark options={solid}, only marks]
table {%
0.08 0.5
};
\addplot [semithick, black, mark=*, mark size=1, mark options={solid}, only marks]
table {%
0.11 0.53
};
\addplot [semithick, black, mark=asterisk, mark size=2, mark options={solid}, only marks]
table {%
0.299020628582345 0.500240274138384
};
\addplot [semithick, black, mark=*, mark size=1, mark options={solid}, only marks]
table {%
0.34 0.59
};
\addplot [semithick, black, mark=*, mark size=1, mark options={solid}, only marks]
table {%
0.22 0.44
};
\addplot [semithick, black, mark=asterisk, mark size=2, mark options={solid}, only marks]
table {%
0.589599427934272 0.564212615403383
};
\addplot [semithick, black, mark=*, mark size=1, mark options={solid}, only marks]
table {%
0.67 0.53
};
\addplot [semithick, black, mark=*, mark size=1, mark options={solid}, only marks]
table {%
0.68 0.56
};
\addplot [semithick, black, mark=*, mark size=1, mark options={solid}, only marks]
table {%
0.66 0.63
};
\addplot [semithick, black, mark=asterisk, mark size=2, mark options={solid}, only marks]
table {%
0.78000952823604 0.685483324561502
};
\addplot [semithick, black, mark=*, mark size=1, mark options={solid}, only marks]
table {%
0.86 0.71
};
\addplot [semithick, black, mark=*, mark size=1, mark options={solid}, only marks]
table {%
0.13 0.9
};
\addplot [semithick, black, mark=*, mark size=1, mark options={solid}, only marks]
table {%
0.29 1
};
\addplot [semithick, black, mark=*, mark size=1, mark options={solid}, only marks]
table {%
0.87 0.05
};
\addplot [semithick, black]
table {%
0.372935989066296 0.113762036758821
0.152042523483557 0.152835343798881
};
\addplot [semithick, black]
table {%
0.152042523483557 0.152835343798881
0.113689478087571 0.438698063373599
};
\addplot [semithick, black]
table {%
0.113689478087571 0.438698063373599
0.299020628582345 0.500240274138384
};
\addplot [semithick, black]
table {%
0.299020628582345 0.500240274138384
0.589599427934272 0.564212615403383
};
\addplot [semithick, black]
table {%
0.589599427934272 0.564212615403383
0.78000952823604 0.685483324561502
};
\end{axis}

\end{tikzpicture}}\\
\fbox{\begin{tikzpicture}

\begin{axis}[
hide x axis,
hide y axis,
axis equal,
width=7cm,
height=7cm,
]
\draw[draw=lightgray!66.92810457516339!black,fill=lightgray!66.92810457516339!black,opacity=0.2] (axis cs:0.318093694802922,0.420110923095455) circle (0.1);
\draw[draw=lightgray!66.92810457516339!black,fill=lightgray!66.92810457516339!black,opacity=0.2] (axis cs:0.378570964719379,0.125021845480973) circle (0.1);
\draw[draw=lightgray!66.92810457516339!black,fill=lightgray!66.92810457516339!black,opacity=0.2] (axis cs:0.171633391585052,0.157758613489018) circle (0.1);
\draw[draw=lightgray!66.92810457516339!black,fill=lightgray!66.92810457516339!black,opacity=0.2] (axis cs:0.580485286516187,0.568623823390223) circle (0.1);
\draw[draw=lightgray!66.92810457516339!black,fill=lightgray!66.92810457516339!black,opacity=0.2] (axis cs:0.34,0.5) circle (0.1);
\draw[draw=lightgray!66.92810457516339!black,fill=lightgray!66.92810457516339!black,opacity=0.2] (axis cs:0.18,0.5) circle (0.1);
\addplot [semithick, black, mark=asterisk, mark size=2, mark options={solid}, only marks, forget plot]
table {%
0.318093694802922 0.420110923095455
};
\addplot [semithick, black, mark=*, mark size=1, mark options={solid}, only marks, forget plot]
table {%
0.22 0.44
};
\addplot [semithick, black, mark=asterisk, mark size=2, mark options={solid}, only marks, forget plot]
table {%
0.378570964719379 0.125021845480973
};
\addplot [semithick, black, mark=*, mark size=1, mark options={solid}, only marks, forget plot]
table {%
0.41 0.03
};
\addplot [semithick, black, mark=*, mark size=1, mark options={solid}, only marks, forget plot]
table {%
0.36 0.2
};
\addplot [semithick, black, mark=asterisk, mark size=2, mark options={solid}, only marks, forget plot]
table {%
0.171633391585052 0.157758613489018
};
\addplot [semithick, black, mark=*, mark size=1, mark options={solid}, only marks, forget plot]
table {%
0.09 0.1
};
\addplot [semithick, black, mark=*, mark size=1, mark options={solid}, only marks, forget plot]
table {%
0.22 0.11
};
\addplot [semithick, black, mark=asterisk, mark size=2, mark options={solid}, only marks, forget plot]
table {%
0.580485286516187 0.568623823390223
};
\addplot [semithick, black, mark=*, mark size=1, mark options={solid}, only marks, forget plot]
table {%
0.67 0.53
};
\addplot [semithick, black, mark=*, mark size=1, mark options={solid}, only marks, forget plot]
table {%
0.68 0.56
};
\addplot [semithick, black, mark=*, mark size=1, mark options={solid}, only marks, forget plot]
table {%
0.66 0.63
};
\addplot [semithick, black, mark=asterisk, mark size=2, mark options={solid}, only marks, forget plot]
table {%
0.34 0.5
};
\addplot [semithick, black, mark=*, mark size=1, mark options={solid}, only marks, forget plot]
table {%
0.34 0.59
};
\addplot [semithick, black, mark=asterisk, mark size=2, mark options={solid}, only marks, forget plot]
table {%
0.18 0.5
};
\addplot [semithick, black, mark=*, mark size=1, mark options={solid}, only marks, forget plot]
table {%
0.08 0.5
};
\addplot [semithick, black, mark=*, mark size=1, mark options={solid}, only marks, forget plot]
table {%
0.11 0.53
};
\addplot [semithick, black, mark=*, mark size=1, mark options={solid}, only marks, forget plot]
table {%
0.13 0.9
};
\addplot [semithick, black, mark=*, mark size=1, mark options={solid}, only marks, forget plot]
table {%
0.29 1
};
\addplot [semithick, black, mark=*, mark size=1, mark options={solid}, only marks, forget plot]
table {%
0.86 0.71
};
\addplot [semithick, black, mark=*, mark size=1, mark options={solid}, only marks, forget plot]
table {%
0.87 0.05
};
\addplot [semithick, black, forget plot]
table {%
0.318093694802922 0.420110923095455
0.378570964719379 0.125021845480973
};
\addplot [semithick, black, forget plot]
table {%
0.318093694802922 0.420110923095455
0.171633391585052 0.157758613489018
};
\addplot [semithick, black, forget plot]
table {%
0.318093694802922 0.420110923095455
0.580485286516187 0.568623823390223
};
\addplot [semithick, black, forget plot]
table {%
0.318093694802922 0.420110923095455
0.34 0.5
};
\addplot [semithick, black, forget plot]
table {%
0.318093694802922 0.420110923095455
0.18 0.5
};
\end{axis}

\end{tikzpicture}}~\fbox{\begin{tikzpicture}

\begin{axis}[
hide x axis,
hide y axis,
axis equal,
width=7cm,
height=7cm,
]
\draw[draw=lightgray!66.92810457516339!black,fill=lightgray!66.92810457516339!black,opacity=0.2] (axis cs:0.386730685930675,0.294711894254353) circle (0.1);
\draw[draw=lightgray!66.92810457516339!black,fill=lightgray!66.92810457516339!black,opacity=0.2] (axis cs:0.596912861414317,0.50643440247977) circle (0.1);
\draw[draw=lightgray!66.92810457516339!black,fill=lightgray!66.92810457516339!black,opacity=0.2] (axis cs:0.357857737327102,0.522937223866027) circle (0.1);
\draw[draw=lightgray!66.92810457516339!black,fill=lightgray!66.92810457516339!black,opacity=0.2] (axis cs:0.148849986664724,0.448401960855271) circle (0.1);
\draw[draw=lightgray!66.92810457516339!black,fill=lightgray!66.92810457516339!black,opacity=0.2] (axis cs:0.160283797804816,0.166429447610702) circle (0.1);
\draw[draw=lightgray!66.92810457516339!black,fill=lightgray!66.92810457516339!black,opacity=0.2] (axis cs:0.312938730519447,0.416567360061099) circle (0.1);
\addplot [semithick, black, mark=asterisk, mark size=2, mark options={solid}, only marks, forget plot]
table {%
0.386730685930675 0.294711894254353
};
\addplot [semithick, black, mark=*, mark size=1, mark options={solid}, only marks, forget plot]
table {%
0.36 0.2
};
\addplot [semithick, black, mark=asterisk, mark size=2, mark options={solid}, only marks, forget plot]
table {%
0.596912861414317 0.50643440247977
};
\addplot [semithick, black, mark=*, mark size=1, mark options={solid}, only marks, forget plot]
table {%
0.67 0.53
};
\addplot [semithick, black, mark=*, mark size=1, mark options={solid}, only marks, forget plot]
table {%
0.68 0.56
};
\addplot [semithick, black, mark=asterisk, mark size=2, mark options={solid}, only marks, forget plot]
table {%
0.357857737327102 0.522937223866027
};
\addplot [semithick, black, mark=*, mark size=1, mark options={solid}, only marks, forget plot]
table {%
0.34 0.59
};
\addplot [semithick, black, mark=asterisk, mark size=2, mark options={solid}, only marks, forget plot]
table {%
0.148849986664724 0.448401960855271
};
\addplot [semithick, black, mark=*, mark size=1, mark options={solid}, only marks, forget plot]
table {%
0.08 0.5
};
\addplot [semithick, black, mark=*, mark size=1, mark options={solid}, only marks, forget plot]
table {%
0.11 0.53
};
\addplot [semithick, black, mark=asterisk, mark size=2, mark options={solid}, only marks, forget plot]
table {%
0.160283797804816 0.166429447610702
};
\addplot [semithick, black, mark=*, mark size=1, mark options={solid}, only marks, forget plot]
table {%
0.09 0.1
};
\addplot [semithick, black, mark=*, mark size=1, mark options={solid}, only marks, forget plot]
table {%
0.22 0.11
};
\addplot [semithick, black, mark=asterisk, mark size=2, mark options={solid}, only marks, forget plot]
table {%
0.312938730519447 0.416567360061099
};
\addplot [semithick, black, mark=*, mark size=1, mark options={solid}, only marks, forget plot]
table {%
0.22 0.44
};
\addplot [semithick, black, mark=*, mark size=1, mark options={solid}, only marks, forget plot]
table {%
0.13 0.9
};
\addplot [semithick, black, mark=*, mark size=1, mark options={solid}, only marks, forget plot]
table {%
0.41 0.03
};
\addplot [semithick, black, mark=*, mark size=1, mark options={solid}, only marks, forget plot]
table {%
0.29 1
};
\addplot [semithick, black, mark=*, mark size=1, mark options={solid}, only marks, forget plot]
table {%
0.86 0.71
};
\addplot [semithick, black, mark=*, mark size=1, mark options={solid}, only marks, forget plot]
table {%
0.66 0.63
};
\addplot [semithick, black, mark=*, mark size=1, mark options={solid}, only marks, forget plot]
table {%
0.87 0.05
};
\addplot [semithick, black, forget plot]
table {%
0.386730685930675 0.294711894254353
0.596912861414317 0.50643440247977
};
\addplot [semithick, black, forget plot]
table {%
0.312938730519447 0.416567360061099
0.357857737327102 0.522937223866027
};
\addplot [semithick, black, forget plot]
table {%
0.312938730519447 0.416567360061099
0.148849986664724 0.448401960855271
};
\addplot [semithick, black, forget plot]
table {%
0.386730685930675 0.294711894254353
0.160283797804816 0.166429447610702
};
\addplot [semithick, black, forget plot]
table {%
0.386730685930675 0.294711894254353
0.312938730519447 0.416567360061099
};
\addplot [semithick, black, forget plot]
table {%
0.596912861414317 0.50643440247977
0.357857737327102 0.522937223866027
};
\addplot [semithick, black, forget plot]
table {%
0.596912861414317 0.50643440247977
0.312938730519447 0.416567360061099
};
\addplot [semithick, black, forget plot]
table {%
0.357857737327102 0.522937223866027
0.148849986664724 0.448401960855271
};
\addplot [semithick, black, forget plot]
table {%
0.148849986664724 0.448401960855271
0.160283797804816 0.166429447610702
};
\addplot [semithick, black, forget plot]
table {%
0.160283797804816 0.166429447610702
0.312938730519447 0.416567360061099
};
\end{axis}

\end{tikzpicture}}~\fbox{\begin{tikzpicture}

\begin{axis}[
hide x axis,
hide y axis,
axis equal,
width=7cm,
height=7cm,
]
\draw[draw=lightgray!66.92810457516339!black,fill=lightgray!66.92810457516339!black,opacity=0.2] (axis cs:0.171647831721368,0.113370239680867) circle (0.1);
\draw[draw=lightgray!66.92810457516339!black,fill=lightgray!66.92810457516339!black,opacity=0.2] (axis cs:0.367119309160633,0.108279276420335) circle (0.1);
\draw[draw=lightgray!66.92810457516339!black,fill=lightgray!66.92810457516339!black,opacity=0.2] (axis cs:0.675472119847715,0.583998523396568) circle (0.1);
\draw[draw=lightgray!66.92810457516339!black,fill=lightgray!66.92810457516339!black,opacity=0.2] (axis cs:0.817065776077334,0.673006906360159) circle (0.1);
\draw[draw=lightgray!66.92810457516339!black,fill=lightgray!66.92810457516339!black,opacity=0.2] (axis cs:0.136980861273969,0.804431693153517) circle (0.1);
\draw[draw=lightgray!66.92810457516339!black,fill=lightgray!66.92810457516339!black,opacity=0.2] (axis cs:0.154338109883223,0.509322710597004) circle (0.1);
\addplot [semithick, black, mark=asterisk, mark size=2, mark options={solid}, only marks, forget plot]
table {%
0.171647831721368 0.113370239680867
};
\addplot [semithick, black, mark=*, mark size=1, mark options={solid}, only marks, forget plot]
table {%
0.09 0.1
};
\addplot [semithick, black, mark=*, mark size=1, mark options={solid}, only marks, forget plot]
table {%
0.22 0.11
};
\addplot [semithick, black, mark=asterisk, mark size=2, mark options={solid}, only marks, forget plot]
table {%
0.367119309160633 0.108279276420335
};
\addplot [semithick, black, mark=*, mark size=1, mark options={solid}, only marks, forget plot]
table {%
0.41 0.03
};
\addplot [semithick, black, mark=*, mark size=1, mark options={solid}, only marks, forget plot]
table {%
0.36 0.2
};
\addplot [semithick, black, mark=asterisk, mark size=2, mark options={solid}, only marks, forget plot]
table {%
0.675472119847715 0.583998523396568
};
\addplot [semithick, black, mark=*, mark size=1, mark options={solid}, only marks, forget plot]
table {%
0.67 0.53
};
\addplot [semithick, black, mark=*, mark size=1, mark options={solid}, only marks, forget plot]
table {%
0.68 0.56
};
\addplot [semithick, black, mark=*, mark size=1, mark options={solid}, only marks, forget plot]
table {%
0.66 0.63
};
\addplot [semithick, black, mark=asterisk, mark size=2, mark options={solid}, only marks, forget plot]
table {%
0.817065776077334 0.673006906360159
};
\addplot [semithick, black, mark=*, mark size=1, mark options={solid}, only marks, forget plot]
table {%
0.86 0.71
};
\addplot [semithick, black, mark=asterisk, mark size=2, mark options={solid}, only marks, forget plot]
table {%
0.136980861273969 0.804431693153517
};
\addplot [semithick, black, mark=*, mark size=1, mark options={solid}, only marks, forget plot]
table {%
0.13 0.9
};
\addplot [semithick, black, mark=asterisk, mark size=2, mark options={solid}, only marks, forget plot]
table {%
0.154338109883223 0.509322710597004
};
\addplot [semithick, black, mark=*, mark size=1, mark options={solid}, only marks, forget plot]
table {%
0.08 0.5
};
\addplot [semithick, black, mark=*, mark size=1, mark options={solid}, only marks, forget plot]
table {%
0.22 0.44
};
\addplot [semithick, black, mark=*, mark size=1, mark options={solid}, only marks, forget plot]
table {%
0.11 0.53
};
\addplot [semithick, black, mark=*, mark size=1, mark options={solid}, only marks, forget plot]
table {%
0.34 0.59
};
\addplot [semithick, black, mark=*, mark size=1, mark options={solid}, only marks, forget plot]
table {%
0.29 1
};
\addplot [semithick, black, mark=*, mark size=1, mark options={solid}, only marks, forget plot]
table {%
0.87 0.05
};
\addplot [semithick, black, forget plot]
table {%
0.171647831721368 0.113370239680867
0.367119309160633 0.108279276420335
};
\addplot [semithick, black, forget plot]
table {%
0.675472119847715 0.583998523396568
0.817065776077334 0.673006906360159
};
\addplot [semithick, black, forget plot]
table {%
0.136980861273969 0.804431693153517
0.154338109883223 0.509322710597004
};
\end{axis}

\end{tikzpicture}}
\caption{Solution to the MCLPIF for the data of Example \ref{ex:1} and shapes (from top left to bottom right): Complete, Cycle, Line, Star, Ring-Star and Matching.\label{fig:mclpif}}
\end{center}
\end{figure}
\end{example}

Other graph structures can be modeled and incorporated to constraint \eqref{c1:4}, as for instance spanning trees. This can be done by using different modeling strategies. The classical way is by imposing that the subgraph has $p-1$ edges and the subtour elimination constraints (SEC), as:
\begin{align*}
\dsum_{j, k \in P: \atop k\neq j} x_{jk} &= p-1,\\
\dsum_{j, k \in S} x_{jk} &\leq |S|-1, \forall S \subseteq P, S\neq \emptyset
\end{align*}
 assuring that the obtained graph of facilities is a spanning tree. The SEC constraints can be efficiently separated, avoiding the use of exponentially many constraints in the formulation. Other \textit{compact} formulations are possible, as the one proposed by Miller, Tucker and Zemlim \cite{MTZ} or flow-based formulations as the following:
 \begin{align}
 \dsum_{j, k \in P: \atop k\neq j} x_{jk} &= p-1,\label{tree:0}\\
\dsum_{\ell \in P\atop \ell\neq 1} f_{j1\ell} &=1, &\forall j \in P, j\neq 1,\label{f:1}\\
\dsum_{\ell \in P\atop \ell\neq j} f_{j\ell j} &=1,& \forall j \in P, j\neq 1,\label{f:2}\\
\dsum_{\ell \in P} f_{jk\ell} -\dsum_{\ell \in P} f_{j\ell k} &=0, &\forall j, k\in P, j\neq 1, k\neq 1, \ell\neq j\label{f:3}\\
f_{jk\ell} &\leq x_{jk}, &\forall j, k, \ell \in P, j \neq 1.\label{f:4}
\end{align}
where $f_{jk\ell}$ indicates the amount of flow to sent from node $1$ to node $j$ using arc $(k,\ell)$ in the graph. \eqref{tree:0} indicates that the tree must have $p-1$ edges. Constraints \eqref{f:1} and \eqref{f:2} assure that when sending flow from node $1$ to node $j$, a first (different from $1$) and last (different from $j$) node in the network has to be used. Constraint \eqref{f:3} is a flow conservation constraint ensuring that flow entering a node must exit the node. Finally, constraint \eqref{f:4} avoid using nodes that do not belong to the network of the facilities. The above constraints \eqref{f:1}-\eqref{f:4} assure connectivity of the network of facilities.

Finally, we would like to highlight that the strategy proposed by Church in \cite{church1984} to reformulate the continuous MCLP as a discrete MCLP by means of the Circle Intersection Points is no longer valid for the MCLPIF as shown in the following simple counter example.
\begin{example}\label{ex:2}
Let us consider the set of five demand points on the plane $\A=\{(0,0), (1,0),$ $(3.25,0), (5,0)$, $(6,0)\}$, $R_i=0.5$ for $i=1, \ldots, 5$, $r=2.5$, $p=3$ and as $\S(G)$ a line. The CIP set is ${\rm CIP} = \A \cup \{(0.5,0), (5.5,0)\}$. However, the unique optimal solution to the problem to cover the five demand points is to locate the facilities in $X_1^*=(0.5,0)$, $X_2^*=(3,0)$ and $X_3^*=(5.5,0)$ as shown in Figure \ref{fg:ex2}.
\begin{figure}[h]
\begin{center}
\fbox{
\begin{tikzpicture}[scale=1.75]

\begin{axis}[
hide x axis,
hide y axis,
axis equal,
]
\draw[draw=lightgray!66.92810457516339!black,fill=lightgray!66.92810457516339!black,opacity=0.2] (axis cs:0.5,0) circle (0.5);
\draw[draw=lightgray!66.92810457516339!black,fill=lightgray!66.92810457516339!black,opacity=0.2] (axis cs:3,0) circle (0.5);
\draw[draw=lightgray!66.92810457516339!black,fill=lightgray!66.92810457516339!black,opacity=0.2] (axis cs:5.5,0) circle (0.5);
\addplot [semithick, black, mark=asterisk, mark size=2, mark options={solid}, only marks, forget plot]
table {%
0.5 0
};
\addplot [semithick, black, mark=*, mark size=1, mark options={solid}, only marks, forget plot]
table {%
0 0
};
\addplot [semithick, black, mark=*, mark size=1, mark options={solid}, only marks, forget plot]
table {%
1 0
};
\addplot [semithick, black, mark=asterisk, mark size=2, mark options={solid}, only marks, forget plot]
table {%
3 0
};
\addplot [semithick, black, mark=*, mark size=1, mark options={solid}, only marks, forget plot]
table {%
3.25 0
};
\addplot [semithick, black, mark=asterisk, mark size=2, mark options={solid}, only marks, forget plot]
table {%
5.5 0
};
\addplot [semithick, black, mark=*, mark size=1, mark options={solid}, only marks, forget plot]
table {%
5 0
};
\addplot [semithick, black, mark=*, mark size=1, mark options={solid}, only marks, forget plot]
table {%
6 0
};
\addplot [semithick, black, forget plot]
table {%
0.5 0
3 0
};
\addplot [semithick, black, forget plot]
table {%
3 0
5.5 0
};
\end{axis}

\end{tikzpicture}}
\caption{Unique optimal solution of the instance of Example \ref{ex:2}.\label{fg:ex2}}
\end{center}
\end{figure}
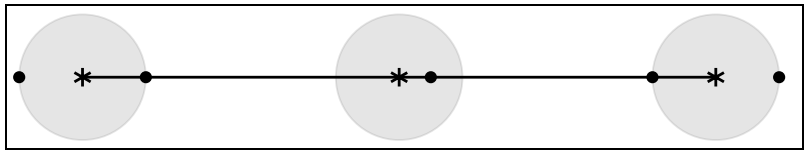
\end{example}

\section{An Integer Programming Formulation for the MCLPIF}\label{sec:3}

The model for the MCLPIF described in \eqref{NL} is a Mixed Integer Non Linear Programming formulation, which can be reformulated as a Mixed Integer Second Order Cone Optimization (MISOCO) problem and then, solved using any of the available off-the-shelf softwares (CPLEX, Gurobi, XPress, ...). However, the capability of MISOCO solvers is still far away of the efficiency of the routines for solving Mixed Integer Linear Programs. In this section we explore the way to solve the problems by using a MILP formulation involving only the binary decision of the MCLPIF, that is, the allocation variables ($z$) and the linking facilities variables ($x$).

The first observation that we draw is that the \textit{difficult} decisions of our problems are those obtained with the $z$ and the $x$-variables, since the continuous variables can be derived from them in polynomial time.
\begin{theorem}
Let $(\bar z, \bar x)$ be feasible solutions of the MCLPIF. Then, optimal positions for the facilities, $X_1, \ldots, X_p$, can be computed in polynomial time.
\end{theorem}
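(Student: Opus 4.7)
The plan is to exploit the fact that, once the binary variables $\bar z$ and $\bar x$ are fixed, the only remaining requirements on the facility coordinates $X_1,\ldots,X_p$ are the coverage distance constraints \eqref{pmclp:1} and the linking distance constraints \eqref{pmclp:8}, both of which become convex once the big-$M$ terms are removed (the corresponding indicator variables are now constants). Thus the residual problem decouples from the combinatorial decisions and becomes a convex feasibility problem in $\R^{d p}$.

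First I would write down, for the fixed $(\bar z,\bar x)$, the system
\begin{align*}
\d(X_j,a_i) &\leq R_i, & \forall i\in N,\, j\in P \text{ with } \bar z_{ij}=1,\\
\d(X_j,X_k) &\leq r, & \forall j,k\in P \text{ with } \bar x_{jk}=1,
\end{align*}
and observe that a solution exists because $(\bar z,\bar x)$ is assumed feasible for the MCLPIF (so there is at least one witness $X$ satisfying these constraints). Unlike in the plain MCLP case, the system does not decouple over $j$ because of the linking constraints, but it remains a single convex program in the joint variable $(X_1,\ldots,X_p)$.

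Next I would argue polynomial-time solvability. For $\d$ the Euclidean norm, every constraint above is a second-order cone constraint, so the feasibility problem is a Second Order Cone Program of size polynomial in $n$ and $p$. By standard interior-point theory~\cite{NN94}, such a problem admits an $\varepsilon$-accurate solution in time polynomial in the input size and $\log(1/\varepsilon)$; the same conclusion applies for any $\ell_\tau$-norm with $\tau\geq 1$ using the SOC reformulations of \cite{BEP14}, and for polyhedral norms the problem reduces to a linear program. To obtain a genuinely optimal (not just $\varepsilon$-feasible) solution, I would solve the trivial convex minimization
$$
\min_{X_1,\ldots,X_p}\; 0 \quad \text{subject to the constraints above},
$$
which is the natural generalization of the convex feasibility program \eqref{pmclp:6} already described for the MCLP; any interior-point iterate converges to a feasible point of the convex system in polynomial time.

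The only subtle point is that the theorem speaks of \emph{optimal} positions, but since the objective of the MCLPIF depends solely on the $z$-variables, any feasible completion $(X_1,\ldots,X_p)$ of $(\bar z,\bar x)$ is automatically optimal for the continuous component. Therefore the main obstacle is purely a modelling one — making sure that the linking constraints are treated jointly and that the resulting SOCP has a strictly feasible point (which follows from feasibility of $(\bar z,\bar x)$ together with a standard perturbation argument to guarantee Slater's condition for interior-point convergence) — rather than an analytical one.
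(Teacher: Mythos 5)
Your proposal is correct and follows essentially the same route as the paper: fix $(\bar z,\bar x)$, observe that the residual problem in the $X$-variables is a convex (SOC-representable) feasibility program with constant objective, and invoke interior-point methods~\cite{NN94} for polynomial-time solvability to any desired accuracy. Your additional remarks on Slater's condition and on optimality being automatic because the objective depends only on $z$ are sensible refinements but do not change the argument.
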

\begin{proof}
Observe that once the $z$ and the $x$ variables are fixed, the problem turns into:
          \begin{subequations}
    \makeatletter
        \def\@currentlabel{${\rm SP}$}
        \makeatother
       \label{SP}
        \renewcommand{\theequation}{${\rm SP}_{\arabic{equation}}$}
\begin{align}
	\xi(x,z):= \max & \dsum_{i\in N, j \in P: \atop \bar z_{ij}=1} \omega_i \label{p1:1}\\
	\mbox{s.t. } & \|a_i - X_j\| \leq R, \forall i\in N, j \in P: \bar z_{ij}=1,\label{p1:2}\\
			& \|X_j - X_k\| \leq r, \forall j, k \in P, j<k: \bar x_{jk}=1,\label{p1:3}\\
			& X_j \in \R^2, \forall j \in P.\label{p1:4}
\end{align}
\end{subequations}
which can be reformulated as a (continuous) SOC problem, and then, solved by interior point methods in polynomial time for any desired accuracy~\cite{NN94}. 
\end{proof}
Observe that, although \eqref{p1:1}-\eqref{p1:4} is a feasibility problem (the objective function is constant), if the optimal values for the $z$ and the $x$ variables are known, one may choose, among the multiple optimal solutions of the problem in the $X$-variables, for instance those minimizing the overall length of the graph just by considering the objective function:
$$
\dsum_{j,k \in P: \atop j<k} \d(X_j, X_k),
$$
obtaining a more \textit{compact} graph and still maintaining the SOC structure of the problem.

In what follows we analyze the feasible region of the problem \eqref{SP} above, and detail how this information can be exploited and incorporated to the $x$ and $z$ variables in order to project out the $X$-variables in \eqref{NL}.

Let $\bar z \in \{0,1\}^{n\times p}$ and $\bar x \in \{0,1\}^{p\times p}$ be feasible solutions of the MCLPIF, and denote by $C_j=\{i \in N: \bar z_{ij}=1\}$ and $K_j=\{k \in P: \bar x_{jk}=1\}$. Then, we get that:
\begin{itemize}
\item By constraint \eqref{p1:1}:
\begin{equation}\label{cov}
X_j \in \bigcap_{i \in C_j} \B_{R_i}(a_i), \forall j\in P,\tag{${\rm Cov}$}
\end{equation}
that is, $X_j$ is in the intersection of all $\d$-balls centered at the points covered by the facility the radia of them, $R_i$, for all $i\in C_j$.
\item By constraint \eqref{p1:2}:
\begin{equation}\label{link}
X_j \in \bigcap_{k\in K_j} \B_r(X_k), \forall j\in P,\tag{${\rm Link}$}
\end{equation}
 that is, the $j$-th facility must be reachable (at distance $r$) to all the facilities linked to it. 
\end{itemize}

The above conditions fully characterize all the feasible solutions of \eqref{p1:1}-\eqref{p1:4}. However, although \eqref{cov} is clearly determined from the input data (the demand points and the radia $R_i$), \eqref{link} depends on the coordinates of the $X$-variables, whose values are unknown. In what follows we derive a necessary a sufficient condition for the existence of feasible values of the $X$-variables in terms of the $z$ and $x$-variables that allows us to project them out from the nonlinear formulation.

Let us denote by $\oplus$ the Minkowski sum on $\R^d$ ($A \oplus B = \{a+b: a \in A, b \in B\}$, $\forall A, B \subset \R^d$). Also, given $\S(G)$, $\mathbf{C}=\{C_1, \ldots, C_p\}$ with $C_1, \ldots, C_p \subseteq N$ and $\mathbf{K}=\{K_1, \ldots, K_p\}$ with $K_1, \ldots, K_p \subseteq P$, we denote:
$$
z^\mathbf{C}_{ij}=\left\{\begin{array}{cl}
1 & \mbox{if $i\in C_j$},\\
0 & \mbox{otherwise}
\end{array}\right. \mbox{ and }
x^\mathbf{K}_{jk}=\left\{\begin{array}{cl}
1 & \mbox{if $j\in K_j$},\\
0 & \mbox{otherwise}
\end{array}\right.,
$$
for all $i \in N, j, k \in P$

\begin{theorem}\label{th:2}
Let $\mathbf{C}=\{C_1, \ldots, C_p\}$ with $C_1, \ldots, C_p \subseteq N$ be nonempty disjoint sets of demand points and $\mathbf{K}=\{K_1, \ldots, K_p\} \subset P$ defining the graph structure $\S(G)$. Then, the following conditions are equivalent:
\begin{enumerate}
\item The set \begin{equation}\label{bigintersect}
\mathcal{M}_j := \bigcap_{i \in C_j} \B_{R_i}(a_i) \cap \bigcap_{k\in K_j} \Big(\Big(\bigcap_{i \in C_k} \B_{R_i}(a_i) \Big) \oplus \B_r(0)\Big),
\end{equation}
is non empty, for all $j \in P$.
\item The exists $\mathbf{X}=(X_1, \ldots, X_p) \in \R^d$ such that $(z^\mathbf{C}, x^\mathbf{K}, \mathbf{X})$ is a feasible solution for the MCLPIF.
\end{enumerate}
\end{theorem}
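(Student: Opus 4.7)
The plan is to establish the two implications separately. For $(2)\Rightarrow(1)$, the argument is a direct verification: given a feasible $\mathbf{X}$, the coverage constraints give $X_j \in \bigcap_{i\in C_j}\B_{R_i}(a_i)=:T_j$ for each $j$, and for each $k\in K_j$, combining the link constraint $\|X_j-X_k\|\leq r$ (equivalently $X_j-X_k\in\B_r(0)$) with $X_k\in T_k$ yields $X_j = X_k + (X_j-X_k) \in T_k \oplus \B_r(0)$. Intersecting these containments over the cluster $C_j$ and all neighbors shows $X_j\in\M_j$, so $\M_j$ is nonempty.

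For $(1)\Rightarrow(2)$, the natural strategy is to pick $X_j\in\M_j$ for each $j$ as a candidate and check feasibility. The coverage constraints are immediate from $\M_j\subseteq T_j$. The link constraint $\|X_j-X_k\|\leq r$ for each edge $(j,k)$ of $\S(G)$ is the nontrivial part: the defining property $X_j\in T_k\oplus\B_r(0)$ says precisely that $\mathrm{dist}(X_j,T_k)\leq r$, and by the symmetric role of the edge ($j\in K_k$) also $\mathrm{dist}(X_k,T_j)\leq r$. Using these reciprocal Minkowski-sum conditions together with the convexity of each $T_k$ (an intersection of Euclidean balls), the joint distance bound $\|X_j-X_k\|\leq r$ would follow simultaneously over all edges.

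The main obstacle is that $X_j\in T_k\oplus\B_r(0)$ only exhibits \emph{some} witness $Y\in T_k$ close to $X_j$, and the chosen $X_k\in\M_k$ need not coincide with $Y$; a naive independent selection can therefore violate the link constraint (for instance with $p=2$, $T_1=T_2=[0,M]$ and $M\gg r$, one might pick $X_1=0$ and $X_2=M$). I would resolve this in two complementary ways: first, for tree-shaped $\S(G)$, by a constructive root-and-propagate procedure (fix $X_{j_0}\in\M_{j_0}$ at a chosen root, then in BFS order pick $X_k\in T_k\cap\B_r(X_{\mathrm{parent}(k)})$, which is nonempty by the Minkowski condition); and second, for $\S(G)$ with cycles, by arguing that the convex set $F=\{\mathbf{X}\in\R^{dp}: X_j\in T_j,\ \|X_j-X_k\|\leq r \text{ for every edge of } \S(G)\}$ is nonempty via a convex separation/duality argument in which any Farkas-type infeasibility certificate for $F$ would force some $\M_j=\emptyset$, contradicting~(1).
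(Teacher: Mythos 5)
Your verification of $(2)\Rightarrow(1)$ is correct and is exactly the direction the paper dismisses as ``straightforward''. For $(1)\Rightarrow(2)$ you have identified precisely the right obstruction --- the condition $X_j\in T_k\oplus\B_r(0)$ only certifies that \emph{some} point of $T_k$ is within $r$ of $X_j$, not the point you have already committed to for facility $k$ --- but neither of your proposed repairs closes the gap. The root-and-propagate construction breaks at depth two: the point $X_k$ you choose in $T_k\cap\B_r(X_{\mathrm{parent}(k)})$ lies in $T_k$ but not necessarily in $\M_k$, so nothing guarantees it is within distance $r$ of $T_{k'}$ for the children $k'$ of $k$, and the induction cannot continue. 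The Farkas/separation plan for cyclic structures is an intention rather than an argument: no certificate is constructed, and none can be, for the reason below.

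In fact the implication $(1)\Rightarrow(2)$ is false, so no repair is possible. Take $r=1$, $\S(G)$ the path on $\{1,2,3,4\}$ with edges $\{1,2\},\{2,3\},\{3,4\}$, and clusters whose coverage regions are $T_1=\{(-1,0)\}$, $T_2=T_3=\B_5((5,0))$, $T_4=\{(11,0)\}$ (all realizable as intersections of balls around suitably placed demand points, e.g.\ $T_1$ as the intersection of two disks of radius $1/2$ centred at $(-3/2,0)$ and $(-1/2,0)$). Then $\M_1=\{(-1,0)\}$, $\M_2=\B_5((5,0))\cap\B_1((-1,0))=\{(0,0)\}$, $\M_3=\{(10,0)\}$ and $\M_4=\{(11,0)\}$ are all nonempty, yet any feasible choice is forced to $X_1=(-1,0)$, $X_2=(0,0)$, $X_3=(10,0)$, $X_4=(11,0)$, which violates the edge $\{2,3\}$ since $\|X_2-X_3\|=10>r$. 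For comparison, the paper's own proof of this direction is a short contradiction argument whose central displayed identity does not follow from the hypothesis and whose final step --- concluding that some $\M_j$ must be empty --- is exactly the non sequitur your ``incompatible witnesses'' example warns against. So your proposal is incomplete, but your diagnosis of where the difficulty lies is more accurate than the published argument; the honest conclusion is that only $(2)\Rightarrow(1)$ holds, i.e.\ the nonemptiness conditions are necessary but not sufficient, and the resulting integer program is a relaxation rather than an exact reformulation.
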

\begin{proof}
Let us assume that $\mathbf{C}=\{C_1, \ldots, C_p\}$ and $\mathbf{K}=\{K_1, \ldots, K_p\}$ are given such that the $\M$-sets in \eqref{bigintersect} are nonempty. Then, we construct the $\bar z$ and $\bar x$- values as:
$$
\bar z_{ij} =z^\mathbf{C}_{ij} \quad \mbox{and} \quad \bar x_{jk} = x^\mathbf{K}_{jk}.
$$
Let us denote by $L_j = \bigcap_{i \in C_j} \B_{R_i}(a_i)$ for all $j\in P$.
%
Assume that there not exist $\mathbf{X}=(X_1, \ldots, X_p)$ such that $(x,z,\mathbf{X})$ is feasible for MCLPIF, i.e., for all $\mathbf{X}=(X_1, \ldots, X_p) \in L_1 \times \cdots \times L_p$ there exist $j_X\in P$ and $k_X \in K_{j_X}$ such that $\|X_{j_X}-X_{k_X}\| >r$, or equivalently, $X_{j_X} \not\in \B_r(X_{k_X})$. Thus, we have that for all $\mathbf{X} \in L_1 \times \cdots \times L_p$:
$$\
\Big(L_1 \times \bigcap_{k \in K_1} (L_k \oplus \B_r(0))\Big) \times \cdots \times \Big(L_p \times \bigcap_{k \in K_p} (L_k \oplus \B_r(0))\Big) = \emptyset
$$
Then, any of the sets $\M_j = L_j \cap \bigcap_{k \in K_j} (L_k \oplus \B_r(0))$ is empty, contradicting the non-emptyness of the $\M$-sets.

The other implication is straightforward.
\end{proof}

The above result allows us to reformulate \eqref{NL} using only the $z$ and $x$ variables. We denote, for any $\C=\{C_1, \ldots, C_{p}\}$ with $C_j \subseteq N$, for all $j\in P$ and $\K=\{K_1, \ldots, K_p\}$ with $K_j \subseteq P$ for all $j\in P$:
$$
\mathcal{M}_j(\mathbf{C}; \mathbf{K}) = \bigcap_{i \in C_j} \B_{R_i}(a_i) \cap \bigcap_{k\in K_j} \Big(\Big(\bigcap_{i \in C_k} \B_{R}(a_i) \Big) \oplus \B_r(0)\Big) \neq \emptyset
$$

\begin{cor}
A solution to the MCLPIF can be obtained by solving the following integer linear programming formulation:
\begin{align}
	\max & \dsum_{i\in N}  \dsum_{j \in P} \omega_i z_{ij} \label{c1:0}\\
	\mbox{s.t. } & \dsum_{j \in P} z_{ij} \leq 1, \forall i \in N, \label{c1:1}\\
	& \dsum_{i \in N} z_{ij} \geq 1, \forall j \in P, \label{c1:2}\\
			& \dsum_{i \in C_j} z_{ij} + \dsum_{k \in K_j} \dsum_{i \in C_k} z_{ik} + \dsum_{k \in K_j} x_{jk} \leq  |C_j| + \dsum_{k \in L} |C_k|  + |K_j| -1,\nonumber\\
			& \forall C_1, \ldots, C_{p} \subset N, K_1, \ldots, K_p \subseteq P,  \text{ with } \mathcal{M}_j(\C; \K) = \emptyset ,\label{c2:4}\\
			& x \in \mathcal{S}(G),\label{c2:4}\\
			& z_{ij}\in \{0,1\}, \forall i \in N, j \in P,\label{c2:5}\\
			& x_{jk}\in \{0,1\}, \forall j, k \in P. \label{c2:6}
\end{align}
\end{cor}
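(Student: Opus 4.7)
The plan is to derive the corollary directly from Theorem~\ref{th:2}, which already translates the existence of feasible facility coordinates into the nonemptyness of the sets $\mathcal{M}_j(\mathbf{C};\mathbf{K})$. The objective \eqref{c1:0} is identical to that of \eqref{NL}; constraints \eqref{c1:1}-\eqref{c1:2} are the familiar assignment conditions (each demand point covered at most once, each facility covering at least one point); and \eqref{c2:5}-\eqref{c2:6} together with \eqref{c2:4} fix the binary nature of the decisions and enforce the graph structure $\S(G)$. The only genuine content lies in showing that the family \eqref{c2:4} is a valid linear surrogate, in the $(z,x)$-variables alone, of the nonlinear coverage and linking requirements of \eqref{NL}.

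First I would argue the forward direction. Given $(z^\star,x^\star,X^\star)$ feasible for \eqref{NL}, set $C_j=\{i\in N:z^\star_{ij}=1\}$ and $K_j=\{k\in P:x^\star_{jk}=1\}$. Constraints \eqref{c1:1}-\eqref{c1:2} hold by construction. For every proposed family $(\mathbf{C}',\mathbf{K}')$ and index $j^\star$ with $\mathcal{M}_{j^\star}(\mathbf{C}';\mathbf{K}')=\emptyset$, one cannot have $(\mathbf{C}',\mathbf{K}')$ agree with the induced $(\mathbf{C},\mathbf{K})$ on coordinate $j^\star$; otherwise $X^\star_{j^\star}$ would be a point of $\mathcal{M}_{j^\star}$, contradicting emptyness by Theorem~\ref{th:2}. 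Translating this disagreement into the counting on the left-hand side of \eqref{c2:4} yields at least one unit of slack, which is exactly the required inequality.

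Next I would argue the reverse direction. Given $(z,x)$ feasible for the ILP, let $\mathbf{C}$ and $\mathbf{K}$ be the supports of $z$ and $x$ respectively. Assume, for contradiction, that $\mathcal{M}_{j^\star}(\mathbf{C};\mathbf{K})=\emptyset$ for some $j^\star\in P$. Applying constraint \eqref{c2:4} to this particular $(\mathbf{C},\mathbf{K})$ and to $j=j^\star$, each of the three sums on the left attains its respective maximum $|C_{j^\star}|$, $\sum_{k\in K_{j^\star}}|C_k|$, and $|K_{j^\star}|$, so their total exceeds the right-hand side by one, a contradiction. Hence $\mathcal{M}_j(\mathbf{C};\mathbf{K})\neq\emptyset$ for every $j\in P$, and Theorem~\ref{th:2} produces coordinates $X_1,\ldots,X_p$ making $(z,x,X)$ feasible in \eqref{NL}; these coordinates can be recovered in polynomial time by solving the convex feasibility problem \eqref{p1:1}-\eqref{p1:4}. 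The objective value is preserved since \eqref{c1:0} depends only on $z$.

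The delicate point will be the bookkeeping in \eqref{c2:4}. The three summands on the left count disjoint pieces of binary information attached to index $j$: the points assigned to facility $j$ (via $z_{ij}$, $i\in C_j$), the points assigned to each facility $k$ neighboring $j$ in $\S(G)$ (via $z_{ik}$, $k\in K_j$, $i\in C_k$), and the activated links incident to $j$ (via $x_{jk}$, $k\in K_j$). When $(z,x)$ agrees in full with the configuration $(\mathbf{C},\mathbf{K})$ at index $j$, those sums reach $|C_j|+\sum_{k\in K_j}|C_k|+|K_j|$, so the $-1$ on the right is the tightest no-good cut excluding precisely that agreement while forbidding no legitimate solution. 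Verifying that the three counts are independent—which amounts to noting that the index pairs $(i,j)$, $(i,k)$ with $k\in K_j$, and $(j,k)$ that appear on the left are pairwise disjoint as variable indices—is the only nontrivial check needed.
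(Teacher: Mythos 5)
Your proposal is correct and follows essentially the same route as the paper: invoke Theorem~\ref{th:2} to reduce feasibility of $(z,x)$ to nonemptiness of the sets $\mathcal{M}_j(\C;\K)$ induced by the supports of $z$ and $x$, and recognize \eqref{c2:4} as the family of no-good cuts that excludes exactly the configurations producing an empty $\mathcal{M}_j$ (the paper's own proof is a terser sketch of this same argument, and your bookkeeping of the three disjoint variable groups on the left-hand side makes explicit what the paper leaves implicit). The only wording to tighten is in the forward direction: the left-hand side of \eqref{c2:4} attains its maximum precisely when the proposed sets are \emph{contained in} (not equal to) the induced ones, and then monotonicity of intersections places $X^\star_{j^\star}$ inside $\mathcal{M}_{j^\star}(\C';\K')$, which is the contradiction you want.
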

\begin{proof}
By Theorem \ref{th:2}, any feasible solution, in the $X$-variables, given the values of the $z$ and $x$-variables, verifies that 
$$
X_j \in  \mathcal{M}_j := \mathcal{M}_{j}(\C, \K) = \bigcap_{i \in C_j} \B_R(a_i)  \cap \bigcap_{k\in K_j} \Big(\Big(\bigcap_{i \in C_k} \B_{R}(a_i) \Big) \oplus \B_r(0)\Big)
$$
and also that the optimal solution of MCLPIF can be obtained by choosing adequately $X_j \in \mathcal{M}_j$ for $j\in P$. Thus, in order to be sure that the $z$ and $x$ variables induce non empty sets $\mathcal{M}_1, \ldots, \mathcal{M}_p$ where choosing $X_1, \ldots, X_p$ one has to impose that combinations of $C_1, \ldots, C_p$ and $K_1, \ldots, K_p$ inducing empty sets are not allowed.

In this way, constraint \eqref{c2:4} enforces that in case $\mathcal{M}_j = \emptyset$ the solution if no longer valid, and then, all other possibilities inducing non empty sets are feasible.

Once the clusters of points and links are obtained with the formulation above, one is assured that \eqref{p1:1}-\eqref{p1:3}, whose solutions are the desired coordinates of the centers.
\end{proof}

The above mathematical programming formulation reduces the search of an optimal solution of the MCLPIF to the $z$ and $x$ binary variables, avoiding continuous variables and non linear constraints, at the price of adding the exponentially many constraints in \eqref{c2:4}. Also, constraints in the shape of \eqref{c2:4} are added in case the sets $\mathcal{M}_j(\C; \K)$ are empty. Such a set is the intersection of the intersection of $\d$-balls and the intersections of Minkowski sums of intersection of $\d$-balls with $\d$-balls centered at the origin, that, although convex sets, they are not easy to handle. Actually, these sets have been recently named in the literature generalized Minkowski sets (see \cite{GMS19}). 

The following result allows us to reduce the \textit{emptyness tests} on the $\mathcal{M}$-sets. We denote by $\O_{i_1i_2} = \Big(\B_{R_{i_1}}(a_{i_1}) \cap \B_{R_{i_2}}(a_{i_2}) \Big) \oplus \B_r(0)$ for any $i_1, i_2 \in N$. Note that in case $|S| = 1$ the set $\O_{i_1i_1}$ reduces to $\B_{R_{i_1}+r}(a_{i_1})$.

\begin{lem}
Let $S \subseteq N$ then:
$$
\Big(\bigcap_{i \in S} \B_{R_i}(a_i)\Big) \oplus \B_{r}(0)  = \bigcap_{i_1,i_2 \in S} \O_{i_1i_2} 
$$
\end{lem}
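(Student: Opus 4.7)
The plan is to prove the stated equality by establishing the two set-theoretic inclusions separately.

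The forward inclusion $\subseteq$ is by unpacking definitions. Any element of the left-hand side can be written as $y+z$ with $y\in\bigcap_{i\in S}\B_{R_i}(a_i)$ and $z\in\B_r(0)$; in particular $y\in\B_{R_{i_1}}(a_{i_1})\cap\B_{R_{i_2}}(a_{i_2})$ for every pair $i_1,i_2\in S$, hence $y+z\in\O_{i_1i_2}$. Intersecting over all pairs gives the desired containment.

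For the reverse inclusion $\supseteq$, I would exploit the standard reformulation that for any set $A\subseteq\R^2$, one has $x\in A\oplus\B_r(0)$ iff $\B_r(x)\cap A\neq\emptyset$. Applying this on both sides, the inclusion reduces to the statement: if $\B_r(x)\cap\B_{R_{i_1}}(a_{i_1})\cap\B_{R_{i_2}}(a_{i_2})\neq\emptyset$ for every pair $i_1,i_2\in S$, then $\B_r(x)\cap\bigcap_{i\in S}\B_{R_i}(a_i)\neq\emptyset$. To produce a common point I would invoke Helly's theorem in $\R^2$ applied to the finite family $\mathcal{F}=\{\B_r(x)\}\cup\{\B_{R_i}(a_i):i\in S\}$ of closed convex sets; a point in $\bigcap\mathcal{F}$ is precisely the witness $y$ with $y\in\B_r(x)$ and $y\in\bigcap_{i\in S}\B_{R_i}(a_i)$, so that $x=y+(x-y)\in(\bigcap_{i\in S}\B_{R_i}(a_i))\oplus\B_r(0)$.

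The main obstacle is the Helly verification. Triples formed by $\B_r(x)$ together with two covering balls are exactly controlled by the hypothesis $x\in\O_{i_1i_2}$. The delicate case is triples of three distinct covering balls: the hypothesis $x\in\O_{i_1i_2}$ forces only pairwise non-empty intersections of the $\B_{R_i}(a_i)$, and pairwise intersection of disks in $\R^2$ does not in general imply triple intersection. To close this gap one must use the feasibility assumption underlying the use of the lemma in the paper, namely that $S$ corresponds to an admissible cluster with $\bigcap_{i\in S}\B_{R_i}(a_i)\neq\emptyset$ (in the spirit of Lemma~\ref{lem:1}, which is precisely what the $\mathcal{M}$-tests are meant to verify); under this assumption every sub-family of covering balls automatically shares a common point, so Helly's theorem applies to $\mathcal{F}$ and yields the required witness.
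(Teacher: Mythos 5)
Your argument follows the same route as the paper's own proof: the forward inclusion by unpacking the Minkowski sum, and the reverse inclusion by rewriting $x\in A\oplus\B_r(0)$ as $\B_r(x)\cap A\neq\emptyset$ and invoking Helly's theorem in the plane. The substantive difference is that you correctly isolate the weak point that the paper passes over. Helly applied to the family $\{\B_r(x)\}\cup\{\B_{R_i}(a_i):i\in S\}$ requires every triple to meet, and the triples consisting of three covering balls are not controlled by the hypothesis $x\in\bigcap_{i_1,i_2\in S}\O_{i_1i_2}$, which only yields pairwise information. The paper instead applies Helly to $\B_r(x)$ together with the pairwise intersections $\B_{R_{i_1}}(a_{i_1})\cap\B_{R_{i_2}}(a_{i_2})$ and asserts the full intersection from pairwise nonemptiness alone; in $\R^2$ that is not a valid application of Helly, and the identity as literally stated can fail: take three unit disks whose centers form a triangle of side $1.9$ (pairwise intersections nonempty, triple intersection empty) and $r$ very large, so that the right-hand side is nonempty while the left-hand side is $\emptyset\oplus\B_r(0)=\emptyset$. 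Your proposed repair --- restricting to clusters $S$ with $\bigcap_{i\in S}\B_{R_i}(a_i)\neq\emptyset$, which is the situation in which the lemma is actually invoked --- closes the gap, since then every triple of covering balls trivially shares a point and Helly applies to the whole family. So your proof is sound under that added hypothesis, and it makes explicit a condition that the paper's own proof needs but does not state.
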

\begin{proof}

First, observe that the intersection of the $R_i$-disks centered at the points indexed by $S$ is identical to the pairwise intersections in that index set, i.e. 
\begin{equation}\label{2by2}
\bigcap_{i \in S} \B_R(a_i) = \bigcap_{i_1, i_2 \in S} \Big(\B_{R_{i_1}}(a_{i_1}) \cap  \B_{R_{i_2}}(a_{i_2})\Big).
\end{equation}
Let us now check the identity in the result. On the one hand, let us assume that $z \in  \bigcap_{i \in S} \B_R(a_i) \oplus \B_{r}(0)$. Then, by \eqref{2by2}, there exist $ x\in \bigcap_{i_1, i_2 \in S} \left( \B_{R_{i_1}}(a_{i_1}) \cap  \B_{R_{i_2}}(a_{i_2}) \right)$ and $y \in \B_{r}(0)$ such that $z = x+y$. It implies that $x \in \B_{R_{i_1}}(a_{i_1}) \cap  \B_{R_{i_2}}(a_{i_2})$, $\forall i_1,i_2\in S$ and $z \in \Big(\B_{R_{i_1}}(a_{i_1}) \cap  \B_{R_{i_2}}(a_{i_2})\Big) \oplus \B_{r}(0) = \O_{i_1 i_2}$. Thus, $z \in \bigcap_{i_1, i_2 \in S} \O_{i_1 i_2}$.

On the other hand, let $z \in \bigcap_{i_1,i_2 \in S} \O_{i_1i_2}$. Then, $\forall i_1,i_2 \in S$, there exists $x_{i_1i_2},y_{i_1i_2}:  z = x_{i_1i_2}+y_{i_1i_2}$, implying that $x_{i,i'} \in \B_r(z)$, $\forall i,i' \in C_k, i\neq i'$. Then, $\B_r(z) \cap \left( \B_R(a_i) \cap \B_R(a_{i'}) \right) \neq \emptyset$ for all $i_1, i_2 \in S$. By Helly's Theorem, it assures that $\B_r(z) \cap \bigcap_{i_1,i_2 \in S} ( \B_R(a_i) \cap \B_R(a_{i'}) \neq \emptyset$. Then, there exists $x \in \B_r(z) \cap \left( \B_{R_{i_1}}(a_{i_1}) \cap \B_{R_{i_2}}(a_{i_2}) \right)\ \ \forall  i_1, i_2 \in S$. Thus, $z$ can be written as $z = x+(z-x)$, with $x \in \bigcap_{i_1, i_2 \in S} \left( \B_{R_{i_1}}(a_{i_1}) \cap \B_{R_{i_2}}(a_{i_2}) \right)$ and $(z-x) \in \B_{r}(0)$, being then $z \in \bigcap_{i, i' \in S} \left( \B_{R_{i_1}}(a_{i_1}) \cap  \B_{R_{i_2}}(a_{i_2}) \right) \oplus \B_{r}(0)$.

The case $|S| =1$ follows analogously.
\end{proof}

The following result allows us to replace the exponential number of constraints \eqref{c2:4} by a polynomial number of them (for fixed $d$).

\begin{theorem}\label{th:hellyballs0}
 $\C=\{C_1, \ldots, C_{p}\}$ with $C_j \subseteq N$, for all $j\in P$ and $\K=\{K_1, \ldots, K_p\}$ with $K_j \subseteq P$ for all $j\in P$. Then, $\mathcal{M}_j(\C;\K) = \emptyset$ if and only if any of the following intersections is empty:
 $$
 \bigcap_{i \in S^0} \B_{R_i} (a_{i}) \cap \bigcap_{i_1, i_2 \in S^1} \O_{i_1 i_2} = \emptyset,
 $$
 for $S^0, S^1 \subseteq N$ with $|S^0| + |S^1|=d+1$.

 \end{theorem}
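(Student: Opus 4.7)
The plan is to recognize $\M_j(\C;\K)$ as a finite intersection of convex subsets of $\R^d$ and then apply Helly's theorem. Using the preceding lemma to unfold each Minkowski sum into a pair-indexed intersection of $\O$-sets, we can write
\begin{equation*}
\M_j(\C;\K)\;=\;\bigcap_{i\in C_j}\B_{R_i}(a_i)\;\cap\;\bigcap_{k\in K_j}\;\bigcap_{i_1,i_2\in C_k}\O_{i_1 i_2},
\end{equation*}
a finite intersection of convex subsets of $\R^d$: each ball is convex, and each $\O_{i_1 i_2}$ is convex as the Minkowski sum of two convex sets. This situates the problem squarely in the setting of Helly's theorem.

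The backward direction is immediate: for any $S^0\subseteq C_j$ and $S^1\subseteq\bigcup_{k\in K_j}C_k$, the sub-intersection $\bigcap_{i\in S^0}\B_{R_i}(a_i)\cap\bigcap_{i_1,i_2\in S^1}\O_{i_1 i_2}$ drops constraints relative to $\M_j(\C;\K)$ and hence contains it, so emptiness of the sub-intersection forces $\M_j(\C;\K)=\emptyset$.

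For the forward direction, suppose $\M_j(\C;\K)=\emptyset$. Helly's theorem in $\R^d$ yields a sub-collection of $d+1$ of the convex sets listed above whose intersection is already empty. Let $S^0$ collect the indices of the balls in this Helly-empty sub-collection and let $S^1$ collect the indices appearing across the chosen $\O$-pairs. Then the set $\bigcap_{i\in S^0}\B_{R_i}(a_i)\cap\bigcap_{i_1,i_2\in S^1}\O_{i_1 i_2}$ is contained in the Helly-empty sub-collection (it uses every pair from $S^1$ and therefore includes every pair Helly selected), so it is also empty. Padding $S^0$ or $S^1$ with further indices only adds constraints and preserves emptiness, so we can calibrate the seed sizes to the stated $|S^0|+|S^1|=d+1$, read existentially as the existence of seeds of that total size producing an empty sub-intersection.

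The principal obstacle is pinning down the cardinality exactly. A direct application of Helly to the expansion gives $d+1$ convex sets, but each off-diagonal $\O$-pair contributes two indices to $S^1$, so the naive bound is only $|S^0|+|S^1|\leq 2(d+1)$ rather than $d+1$. The route to the sharper count is to apply Helly at the coarser level of the $|C_j|+|K_j|$ macro-blocks (the balls themselves together with the whole Minkowski sums $L_k\oplus\B_r(0)$, where $L_k=\bigcap_{i\in C_k}\B_{R_i}(a_i)$), so that Helly picks at most $d+1$ macro-blocks; the preceding lemma then re-expresses each selected Minkowski-sum block as a pair-intersection over the corresponding $C_k$, and the indices assemble into the $(S^0,S^1)$ of the statement, after which padding yields the claimed equality.
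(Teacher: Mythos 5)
You take the same route as the paper: use the preceding lemma to unfold each Minkowski summand $\bigl(\bigcap_{i\in C_k}\B_{R_i}(a_i)\bigr)\oplus\B_r(0)$ into the pairwise intersection of $\O$-sets, observe that $\M_j(\C;\K)$ is then a finite intersection of convex sets, and apply Helly's theorem in $\R^d$; the reverse implication is monotonicity of intersection. Two remarks on the points where you deviate. First, your worry about the cardinality is really a complaint about the theorem's notation rather than a gap in the argument: the paper's own proof simply ``chooses $d+1$ sets in the intersection,'' and Corollary \ref{th:hellyballs} confirms that $d+1$ is meant to count the number of balls and $\O$-sets selected, not the number of point indices (its condition (2), two balls and one $\O$-set, uses four indices). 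Your macro-block variant does not rescue the literal equality $|S^0|+|S^1|=d+1$ either, since a selected block $L_k\oplus\B_r(0)$ forces $S^1\supseteq C_k$, which can be arbitrarily large; the fine-grained Helly application, reporting the selected pairs, is what is intended and is all that is needed. Second, your backward direction is stated too broadly: $\bigcap_{i_1,i_2\in S^1}\O_{i_1i_2}$ contains $\M_j(\C;\K)$ only when every pair drawn from $S^1$ lies inside a single $C_k$ with $k\in K_j$; if $S^1$ straddles two different clusters, the cross terms $\O_{i_1i_2}$ are not constraints of $\M_j$, so emptiness of that sub-intersection does not force $\M_j=\emptyset$. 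This imprecision is inherited from the theorem statement itself and is repaired in the corollary, where each emptiness test is indexed so that it is a genuine relaxation of $\M_j$.
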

 \begin{proof}
First observe that $\mathcal{M}_j(\C;\K)$ is a convex set since it is the intersections of convex sets ($\d$-balls and Minkowski sums of intersection of balls with a ball). Thus, by Helly's Theorem~\cite{helly}, only intersection of $(d+1)$-wise sets in the intersection is needed. In particular, observe that $\M_j(\C;\K)$ can be written as:
$$
\M_j(\C;\K) = \bigcap_{i \in C_j} \B_{R_i}(a_i) \cap \bigcap_{k \in K_j} \bigcap_{i_1, i_2 \in C_k} \O_{i_1i_2}.
$$
Then, choosing $(d+1)$ sets in the above intersection results in the stated results.
\end{proof}

\begin{cor}\label{th:hellyballs}
 Let $\C=\{C_1, \ldots, C_{p}\}$ with $C_j \subseteq N$, for all $j\in P$ and $\K=\{K_1, \ldots, K_p\}$ with $K_j \subseteq P$ for all $j\in P$. Then, in the planar case, $\mathcal{M}_j(\C;\K) = \emptyset$ if and only any of the following conditions is verified:
\begin{enumerate}
\item $\B_{R_{i_1}}(a_{i_1}) \cap \B_{R_{i_2}}(a_{i_2}) \cap \B_{R_{i_3}}(a_{i_3})  = \emptyset$, for all $i_1, i_2, i_3 \in C_j$.
\item $\B_{R_{i_1}}(a_{i_1}) \cap \B_{R_{i_2}}(a_{i_2}) \cap \O_{i_3i_4} = \emptyset$, for all $i_1, i_2 \in C_j$, $i_3, i_4 \in C_\ell$ for some $\ell \in C_k$ for some $k\in K_j$.
\item $\B_{R_{i_1}}(a_{i_1}) \cap  \O_{i_2i_3} \cap  \O_{i_4i_5} = \emptyset$, for all $i_1 \in C_j$, $i_2, i_3 \in C_{\ell_1}$, $i_4, i_5 \in C_{\ell_2}$ for $\ell_1, \ell_2 \in K_j$.
\item $ \O_{i_1i_2} \cap  \O_{i_3i_4} \cap  \O_{i_5i_6}= \emptyset$, for all $i_1, i_2\in C_{\ell_1}$, $i_3, i_4\in C_{\ell_2}$, $i_5, i_6 \in C_{\ell_3}$, for $\ell_1, \ell_2, \ell_3 \in K_j$.
 \end{enumerate}
 \end{cor}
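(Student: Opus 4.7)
The plan is to specialize Theorem~\ref{th:hellyballs0} to the planar case $d=2$. By that theorem, combined with the preceding lemma rewriting $\bigl(\bigcap_{i\in S}\B_{R_i}(a_i)\bigr)\oplus \B_r(0)$ as $\bigcap_{i_1,i_2\in S}\O_{i_1 i_2}$, the set $\M_j(\C;\K)$ can be written as the intersection of a finite collection of convex planar sets of two types: the balls $\B_{R_i}(a_i)$ with $i\in C_j$, and the generalized Minkowski sets $\O_{i_1 i_2}$ where $i_1,i_2\in C_k$ for some $k\in K_j$. Since these are all convex subsets of $\R^2$, Helly's theorem (in dimension two) ensures that the full intersection is empty if and only if some subcollection of size three already has empty intersection.

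The key step is then a straightforward case split according to how many of those three selected sets are balls (indexed by $S^0\subseteq C_j$) and how many are $\O$-sets (indexed by $S^1$), with $|S^0|+|S^1|=d+1=3$. There are exactly four cases:
\begin{enumerate}
\item $(|S^0|,|S^1|)=(3,0)$: three balls from $C_j$, producing condition~(1).
\item $(|S^0|,|S^1|)=(2,1)$: two balls from $C_j$ and a single $\O_{i_3 i_4}$ coming from some $C_\ell$ with $\ell\in K_j$, producing condition~(2).
\item $(|S^0|,|S^1|)=(1,2)$: one ball from $C_j$ and two $\O$-sets $\O_{i_2 i_3}$, $\O_{i_4 i_5}$, arising from possibly distinct $\ell_1,\ell_2\in K_j$, producing condition~(3).
\item $(|S^0|,|S^1|)=(0,3)$: three $\O$-sets $\O_{i_1 i_2}$, $\O_{i_3 i_4}$, $\O_{i_5 i_6}$ coming from (possibly equal) $\ell_1,\ell_2,\ell_3\in K_j$, producing condition~(4).
\end{enumerate}

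The conjunction ``$\M_j(\C;\K)=\emptyset$ iff one of the above holds'' then follows immediately: the ``if'' direction is trivial since any empty subintersection forces the full intersection to be empty, and the ``only if'' direction is the content of Helly's theorem applied through Theorem~\ref{th:hellyballs0}. I expect no genuine mathematical obstacle; the only care needed is the bookkeeping of indices (ensuring that the ``for some $\ell\in K_j$'' and the distinction between equal and distinct $\ell_i$'s in cases (3) and (4) are transcribed faithfully from the abstract formulation of Theorem~\ref{th:hellyballs0}).
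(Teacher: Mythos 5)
Your proposal is correct and follows essentially the same route as the paper: specialize Theorem~\ref{th:hellyballs0} to $d=2$, use the decomposition of $\M_j(\C;\K)$ into balls $\B_{R_i}(a_i)$ ($i\in C_j$) and sets $\O_{i_1i_2}$ ($i_1,i_2\in C_k$, $k\in K_j$), and enumerate the four ways of choosing three sets from these two families via Helly's theorem in the plane. The only difference is that you spell out the $(|S^0|,|S^1|)$ bookkeeping slightly more explicitly than the paper does, which is harmless.
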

\begin{proof}
In the planar case, $d+1=3$, and then choosing three sets in the above intersection results in the different statements of the corollary, i.e., intersections of three disks, intersections of two disks and one of the $\O$-sets, intersection of one disk and two $\O$-sets and the intersection of three $\O$-sets.
\end{proof}

In the following section we analyze how to handle $\O$-sets, and then $\M$-sets, for the planar case in order to determine whether a constraint of type \eqref{c2:4} must be added or not. 

\subsection{$\O$-sets}

In what follows we study the geometry of the $\O$-sets defined above in order to exploit it in an Integer Programming Formulation for the MCLPIF. Given $a_{i_1}, a_{i_2} \in \R^d$, this set is defined as:
$$
\O_{i_1i_2} = \Big(\B_{R_{i_1}}(a_{i_1}) \cap \B_{R_{i_2}}(a_{i_2}) \Big) \oplus \B_r(0).
$$
This set is the Minkowski sum of the intersection of two $\d$-balls centered at demand points $a_{i_1}$ and $a_{i_2}$ and a $\d$-ball centered at the origin. Since both sets are convex and bounded, $\O_{i_1i_2}$ is also bounded and convex. In Figure \ref{fig:o1} we illustrate the shape of this convex set in the planar case. There, we show the two points centers of the Euclidean balls (disks), $a_{i_1}$ and $a_{i_2}$, and the boundary of the intersection $\B_{R_{i_1}}(a_{i_1}) \cap \B_{R_{i_2}}(a_{i_2})$ that is drawn with a dashed line. The disks $\B_r(0)$ are then moved all around the points in the intersection of the disks. The border of $\O_{i_1i_2}$ is drawn with thick line in the picture. In Figure \ref{fig:o2} we show different shapes for the planar $\O$-sets.

\begin{figure}[h]
\begin{center}
\begin{subfigure}{0.5\textwidth}
\begin{center}
\begin{tikzpicture}[scale=7.4]

\coordinate (P1) at (0,0);
\coordinate (P2) at (1,0);

\node[above]  at (P1) {$a_{i_1}$};
\node[above]  at (P2) {$a_{i_2}$};

\fill (P1)  circle[radius=0.5pt];
\fill (P2)  circle[radius=0.5pt];
\coordinate (Q1) at (0.5,0.33166247903554);
\coordinate (Q2) at (0.5,-0.33166247903554);

\draw[dashed] (P1) +(-33.55730976192071:0.6) arc (-33.55730976192071:33.55730976192071:0.6);
\draw[dashed] (P2) +(146.4426902380793:0.6) arc (146.4426902380793:213.5573097619207:0.6);

\draw (0.5,-0.33166247903554) circle (0.1);

\draw (0.5,0.33166247903554) circle (0.1);

\draw (0.4,0) circle (0.1);

\draw[<->] (Q1) -- node {\tiny \hspace*{1.75pt} ${}_r$} (0.5,0.43166248);
\draw[<->] (P2) -- node[above] {\tiny ${}_R$} (0.42936609022290795, 0.1854101966249685);
\draw[<->] (P1) -- node[below] {\tiny ${}_R$} (0.552636596401731, -0.2336510053851903);

\draw[very thick] (P1) +(-33.55730976192071:0.7) arc (-33.55730976192071:33.55730976192071:0.7);
\draw[very thick] (P2) +(146.4426902380793:0.7) arc (146.4426902380793:213.5573097619207:0.7);

\draw[very thick] (Q1) + (33.55730976192071:0.1) arc (33.55730976192071:146.4426902380793:0.1);
\draw[very thick] (Q2) + (326.4426902380793:0.1) arc (326.4426902380793:213.5573097619207:0.1);

\end{tikzpicture}
\caption{Elements of a planar $\O$-set.\label{fig:o1}}
\end{center}
\end{subfigure}~\begin{subfigure}{0.5\textwidth}
\begin{center}
\begin{tikzpicture}[rotate=80]
\coordinate (P1) at (0.5,0);
\coordinate (P2) at (0,0.5);

\coordinate (Q1) at (0.91144508, 0.91144508);
\coordinate (Q2) at (-0.41144508, -0.41144508);

\draw[very thick] (P1) +(204.29539662504388:1.8) arc (204.29539662504388:65.70460337495611:1.8);
\draw[very thick] (P2) +(245.7046033749561:1.8) arc (-114.2953966250439:24.295396625043896:1.8);

\draw[very thick] (Q2) + (360-155.70460337495612:0.8) arc (360-155.70460337495612:360-114.29539662504389:0.8);
\draw[very thick] (Q1) + (24.295396625043896:0.8) arc  (24.295396625043896:65.70460337495611:0.8);

\end{tikzpicture}~\begin{tikzpicture}[scale=2,rotate=30]
\coordinate (P1) at (0.5,0);
\coordinate (P2) at (0,0.5);

\coordinate (Q1) at (0.75745063, 0.75745063);
\coordinate (Q2) at (-0.25745063, -0.25745063);

\draw[very thick] (P1) +(71.22756674437606:1) arc (71.22756674437606:198.77243325562392:1);
\draw[very thick] (P2) +(251.22756674437608:1) arc (-108.77243325562394:18.77243325562394:1);
\draw[very thick] (Q2) + (-108.77243325562397:0.2) arc (-108.77243325562397:-161.22756674437605:0.2);
\draw[very thick] (Q1) + (71.22756674437606:0.2) arc  (71.22756674437606:18.77243325562393:0.2);

\end{tikzpicture} \\
 \begin{tikzpicture}[scale=2.5,rotate=60]
\coordinate (P1) at (0.5,0);
\coordinate (P2) at (0,0.5);

\coordinate (Q1) at (0.38229337, 0.38229337);
\coordinate (Q2) at (0.11770663,0.11770663);

\draw[very thick] (P1) +(162.88660521741969:0.9) arc (162.88660521741969 : 107.11339478258033:0.9);
\draw[very thick] (P2) +(287.11339478258038:0.9) arc (287.1133947825803 : 342.8866052174197:0.9);
\draw[very thick] (Q2) + (287.1133947825803:0.5) arc (287.1133947825803 : 162.88660521741969:0.5);
\draw[very thick] (Q1) + (342.8866052174197:0.5) arc  (-17.113394782580315:107.11339478258033 : 0.5);

\end{tikzpicture}~\begin{tikzpicture}[scale=5]
\coordinate (P1) at (0.5,0);
\coordinate (P2) at (0,0.5);

\coordinate (Q1) at (0.38229337, 0.38229337);
\coordinate (Q2) at (0.11770663,0.11770663);

\draw[very thick] (P1) +(162.88660521741969:0.6) arc (162.88660521741969 : 107.11339478258033:0.6);
\draw[very thick] (P2) +(287.11339478258038:0.6) arc (287.1133947825803 : 342.8866052174197:0.6);
\draw[very thick] (Q2) + (287.1133947825803:0.2) arc (287.1133947825803 : 162.88660521741969:0.2);
\draw[very thick] (Q1) + (342.8866052174197:0.2) arc  (-17.113394782580315:107.11339478258033 : 0.2);

\end{tikzpicture}
\caption{Different shapes for planar $\O_{i_1i_2}$.\label{fig:o2}}
\end{center}
\label{fig:subim2}
\end{subfigure}
\caption{Planar $\O$-sets.\label{fig:o}}
\end{center}
\end{figure}
Note that $\O_{i_1i_2}$ is non empty provided that $\B_{R_{i_1}}(a_{i_1}) \cap \B_{R_{i_2}}(a_{i_2})$ is non empty. Also, observe that $\O_{i_1i_2} \subset \B_{R_{i_1}+r}(a_{i_1}) \cap \B_{R_{i_2}+r}(a_{i_2})$. This is clear since any element in $x \in \O_{i_1i_2}$ can be written as $x= z + y$ where $z \in \B_{R_{i_1}}(a_{i_1}) \cap \B_{R_{i_2}}(a_{i_2})$ and $y \in \B_r(0)$. Then, $\d(x,a_{i_1}) \leq \d(z + y, a_{i_1}) \leq \d(z,a_{i_1}) + \d(y,0) = R_{i_1}+r$ (analogously for $a_{i_2}$). In case $a_{i_1}=a_{i_2}$, $\O_{i_1i_2}$ coincides with $\B_{R_{i_1}+r}(a_{i_1})$, but since this case can be adequately fixed in a preprocessing phase, we assume that $a_{i_1}\neq a_{i_2}$. In such a case, $\O_{i_1i_2}$ is nothing but a \textit{smoothing} of $\B_{R_{i_1}+r}(a_{i_1}) \cap \B_{R_{i_2}+r}(a_{i_2})$ by two balls of radius $r$ in the two peaks of such a shape (see Figure \ref{fig:o3} where the boundary of $\O_{i_1i2}$ is drawn with a thick line and the boundary of the intersection $\B_{R_{i_1}+r}(a_{i_1}) \cap \B_{R_{i_2}+r}(a_{i_2})$ is drawn with a thiner gray line). Actually, as one can also observe from Figure \ref{fig:o3} that $\O_{i_1i_2}$ can be decomposed in three parts. On the one hand, the middle part in the picture coincide with $\B_{R_{i_1}+r}(a_{i_1}) \cap \B_{R_{i_2}+r}(a_{i_2})$ inside the strip delimited by the intersections points of the balls centered at $\{q_1, q_2\} = \partial \B_{R_{i_1}}(a_{i_1}) \cap \partial \B_{R_{i_2}}(a_{i_2})$ and radius $r$ (here, $\partial A$ denotes the boundary of the bounded set $A$). On the other hand, the two other parts of $\O_{i_1i_2}$ are just the balls with centers in $\{q_1, q_2\}$ and radius $r$. 
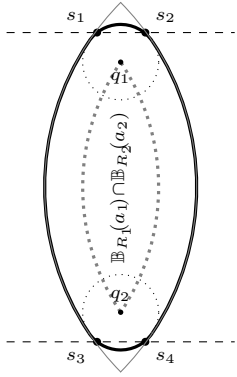
\begin{figure}[bth]
\begin{center}
\begin{tikzpicture}[scale=5]

\coordinate (P1) at (0,0);
\coordinate (P2) at (1,0);


\coordinate (Q1) at (0.5,0.33166247903554);
\coordinate (Q2) at (0.5,-0.33166247903554);

\coordinate (s1) at (0.43797,0.41009);
\coordinate (s2) at (0.565,0.410);

\fill (s1)  circle[radius=0.3pt];
\fill (s2)  circle[radius=0.3pt];
\node[above left] at (s1) {\tiny $s_{1}$};
\node[above right] at (s2) {\tiny $s_{2}$};

\coordinate (s3) at (0.43797,-0.41009);
\coordinate (s4) at (0.565,-0.410);

\fill (s3)  circle[radius=0.3pt];
\fill (s4)  circle[radius=0.3pt];
\node[ below left] at (s3) {\tiny $s_{3}$};
\node[ below right] at (s4) {\tiny $s_{4}$};


\draw[dotted] (Q2) circle (0.1);

\draw[dotted] (Q1) circle (0.1);

\draw[dashed] (0.2,0.41009)--(0.8,0.41009);
\draw[dashed] (0.2,-0.41009)--(0.8,-0.41009);

\draw[very thick] (P1) +(-33.55730976192071:0.7) arc (-33.55730976192071:33.55730976192071:0.7);
\draw[very thick] (P2) +(146.4426902380793:0.7) arc (146.4426902380793:213.5573097619207:0.7);

\draw[very thick, dotted, gray] (P1) +(-34:0.6) arc (-34:34:0.6);
\draw[very thick, dotted, gray] (P2) +(146:0.6) arc (146:213:0.6);

\draw[very thick] (Q1) + (33.55730976192071:0.1) arc (33.55730976192071:146.4426902380793:0.1);
\draw[very thick] (Q2) + (326.4426902380793:0.1) arc (326.4426902380793:213.5573097619207:0.1);

\draw[gray] (P1) +(-44.5:0.7) arc (-44.5:44.5:0.7);
\draw[gray] (P2) +(135.5:0.7) arc (135.5:224.5:0.7);

\fill (Q1)  circle[radius=0.2pt];
\fill (Q2)  circle[radius=0.2pt];
\node[below] at (Q1) {\tiny $q_{1}$};
\node[above] at (Q2) {\tiny $q_{2}$};

\node[rotate=90, font=\tiny] at (0.5,0) {$\B_{R_1}\!\!(a_1\!)\!\cap\!\B_{R_2}\!\!(a_2)$};

\end{tikzpicture}
\caption{Comparison of $\O_{i_1i_2}$ and $\B_{R+r}(a_{i_1}) \cap \B_{R+r}(a_{i_2})$.\label{fig:o3}}
\end{center}
\end{figure}

Thus, $\O_{i_1 i_2}$ can be written as the following (non disjoint) union:
$$
\O_{i_1 i_2} = \B_r(q_1) \cup \B_{r} (q_2) \cup \Big( \B_{R_{i_1}+r}(a_{i_1}) \cap  \B_{R_{i_2}+r}(a_{i_1}) \cap \mathcal{S}_{i_1i_2}\Big),
$$
where $\{q_1, q_2\} = \partial \B_{R_{i_1}}(a_{i_1}) \cap \partial \B_{R_{i_2}}(a_{i_2})$ and $\mathcal{S}_{i_1i_2} = \{z \in \R^d: \alpha_0 + \alpha^t z \leq 0 \leq \beta_0 + \beta^t z\}$ where $\alpha_0, \beta_0 \in \R, \alpha, \beta \in \R^d$ are the coefficients of the hyperplane passing through the points intersecting each of the balls centered at $q_1$ and $q_2$ and radius $r$ and $\partial \B_{R_{i_1}+r}(a_{i_1}) \cap \partial \B_{R_{i_2}+r}(a_{i_2})$ and such that $q_1, q_2 \in \mathcal{S}_{i_1i_2} $ (strip delimited by the dotted lines in Figure \ref{fig:o3}). 

Identifying the geometry of these sets will allow us to formulate, explicitly, and integer programming formulation for the MCLPIF with a polynomial number of constraints, as detailed in the next section.

With the above comments, we reformulate \eqref{c1:0}-\eqref{c2:6} in the planar case by means of linear constraints. In particular, by Theorem \ref{th:hellyballs} constraints \eqref{c2:4} can be replaced by the following set of constraints:

Let $i, i_1, \ldots, i_6 \in N$ and $j, k, k_1, k_2, k_3 \in P$.
\begin{itemize}
\item If $\B_{R_{i_1}}(a_{i_1}) \cap \B_{R_{i_2}}(a_{i_2}) \cap \B_{R_{i_3}}(a_{i_3})  = \emptyset$, then, two cases are possible:
	\begin{itemize}
	\item If $\B_{R_{\ell_1}}(a_{\ell_1}) \cap \B_{R_{\ell_2}}(a_{\ell_2}) = \emptyset$ for some $\ell_1, \ell_2 \in \{i_1, i_2, i_3\}$:
		\begin{equation}\label{r1}\tag{${\rm Int}_1$}
		 z_{{\ell_1}j}+z_{{\ell_2}j} \leq 1, \forall j \in P,
		\end{equation}
	that is, points $a_{\ell_1}$ and $a_{\ell_2}$ are incompatible to be covered by the same facility.
	\item If pairwise intersections are non empty but $\B_{R_{i_1}}(a_{i_1}) \cap \B_{R_{i_2}}(a_{i_2}) \cap \B_{R_{i_3}}(a_{i_3})  = \emptyset$:
		\begin{equation}\label{r2}\tag{${\rm Int}_2$}
		 z_{{i_1}j}+z_{{i_2}j}+z_{{i_3}j} \leq 2, \forall j \in P,
		\end{equation}
	which enforce the three points not to be covered by the same facility.
	\end{itemize}
\item If $\B_{R_{i_1}}(a_{i_1}) \cap \B_{R_{i_2}}(a_{i_2}) \cap \O_{i_3i_4} = \emptyset$, and assuming that the intersection of the two balls is nonempty, we have also two possibilities:
	\begin{itemize}
	\item If $\B_{R_{\ell}}(a_{\ell}) \cap \O_{i_3i_4} = \emptyset$ for some $\ell \in \{i_1, i_2\}$, then, we add the constraints:
		\begin{equation}\label{r3}\tag{${\rm Int}_3$}
		 z_{\ell j}+z_{{i_3}k}+z_{{i_4}k} + x_{jk} \leq 3, \forall j, k \in P,
		\end{equation}
	not allowing to simultaneously allocate $a_\ell$ to a facility ($j$) which is linked to other ($k$) to which $a_{i_3}$ and $a_{i_4}$ are allocated.
	\item If the ball and the $\O$-set intersect but the three wise intersection is empty:
		\begin{equation}\label{r4}\tag{${\rm Int}_4$}
		 z_{{i_1}j}+z_{{i_2}j}+z_{{i_3}k} + z_{{i_4}k} + x_{jk} \leq 4, \forall j, k \in P,
		\end{equation}
	indicating that $a_{i_1}$ and $a_{i_2}$ cannot be allocated to a (same) facility ($j$) and linked to another ($k$) that covers $a_{i_3}$ and $a_{i_4}$.
	\end{itemize}
\item $\B_{R_{i_1}}(a_{i_1}) \cap  \O_{i_2i_3} \cap  \O_{i_4i_5} = \emptyset$, but where the intersection of the ball and each of the two $\O$-sets is nonempty, again two options are possible:
	\begin{itemize}
	\item $\O_{i_2i_3} \cap  \O_{i_4i_5}=\emptyset$, and then, we add:
		\begin{equation}\label{r5}\tag{${\rm Int}_5$}
		z_{{i_2}k_1}+z_{{i_3}k_1}+z_{{i_4}k_2} + z_{{i_5}k_2} + x_{jk_1}+ x_{jk_2} \leq 5, \forall j, k_1, k_2 \in P.
		\end{equation}
	which does not allow a facility covering $a_{i_2}$ and $a_{i_3}$ ($k_1$) and a facility covering $a_{i_4}$ and $a_{i_5}$ ($k_2$) to share a common linked facility ($j$).
	\item In case the $\O$-sets intersect but the three-wise intersection is empty:
		\begin{equation}\label{r6}\tag{${\rm Int}_6$}
		z_{i_1j} + z_{{i_2}k_1}+z_{{i_3}k_1}+z_{{i_4}k_2} + z_{{i_5}k_2} + x_{jk_1}+ x_{jk_2} \leq 6, \forall j, k_1, k_2 \in P,
		\end{equation}
	that avoid linking the facility that covers $a_{i_1}$ simultaneously with the facilities that separately cover $a_{i_2}$ and $a_{i_3}$ ($k_1$) and $a_{i_4}$ and $a_{i_5}$ ($k_2$).
	\end{itemize}
\item Finally, if $\O_{i_1i_2} \cap  \O_{i_3i_4} \cap  \O_{i_5i_6}= \emptyset$ but pairwise intersections are nonempty, we add:
	\begin{equation}\label{r7}\tag{${\rm Int}_7$}
	z_{{i_1}k_1}+z_{{i_2}k_1}+z_{{i_3}k_2} + z_{{i_4}k_2} + z_{{i_5}k_3} +  z_{{i_6}k_3} + x_{jk_1}+ x_{jk_2} + x_{jk_3} \leq 8, \forall j, k_1, k_2, k_3 \in P.
	\end{equation}
which is the generalization to of \eqref{r5} to the case of three facilities ($k_1, k_2, k_3$) sharing a fourth common linked facility ($j$).
\end{itemize}

Summarizing the above comments we have the following result for the planar MCLPIF:
\begin{theorem}\label{formulation}
The planar MCLPIF can be solved by solving the following Integer Linear Programming problem:
          \begin{subequations}
    \makeatletter
        \def\@currentlabel{${\rm IP}$}
        \makeatother
       \label{IP}
        \renewcommand{\theequation}{${\rm IP}_{\arabic{equation}}$}
\begin{align}
	\max & \dsum_{i\in N}  \dsum_{j \in P} \omega_i z_{ij} \label{c1:0b}\\
	\mbox{s.t. } & \eqref{r1}-\eqref{r7},\\
	&\dsum_{j \in P} z_{ij} \leq 1, \forall i \in N, \label{c1:1b}\\
			& x \in \mathcal{S}(G),\nonumber\\
			& z_{ij}\in \{0,1\}, \forall i \in N, j \in P,\nonumber\\
			& x_{jk}\in \{0,1\}, \forall j, k \in P.\nonumber
\end{align}
\end{subequations}
\end{theorem}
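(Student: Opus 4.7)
The plan is to chain together the three results established earlier in the section. First, by Theorem~\ref{th:2}, the MCLPIF reduces to finding binary $(z,x)$ satisfying \eqref{c1:1}, \eqref{c1:2}, $x \in \S(G)$, and such that the induced sets $\mathbf{C}=\{C_1,\dots,C_p\}$ with $C_j=\{i:z_{ij}=1\}$ and $\mathbf{K}=\{K_1,\dots,K_p\}$ with $K_j=\{k:x_{jk}=1\}$ satisfy $\M_j(\mathbf{C};\mathbf{K})\neq\emptyset$ for all $j\in P$; the optimal $X_j\in\M_j$ are then recovered in polynomial time by solving \eqref{SP}. Second, in the planar case ($d=2$), Corollary~\ref{th:hellyballs} asserts that nonemptyness of $\M_j(\mathbf{C};\mathbf{K})$ is equivalent to nonemptyness of every 3-wise intersection obtained by picking three sets among the balls $\B_{R_i}(a_i)$ with $i\in C_j$ and the $\O$-sets $\O_{i_1 i_2}$ with $i_1, i_2\in C_k$ for some $k\in K_j$. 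Thus it suffices to exhibit linear inequalities on $(z,x)$ that exactly forbid the $(z,x)$-configurations that would force such an empty triple intersection into $\M_j$.

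Next, I would carry out the case analysis on the four possible types of empty triples given by Corollary~\ref{th:hellyballs}, producing inequalities \eqref{r1}--\eqref{r7}. For each case, the inequality is constructed by the same principle: if a certain geometric triple intersection is empty, then the sum of the $z$-variables and $x$-variables that, via the definitions of $\mathbf{C}$ and $\mathbf{K}$, would simultaneously force those three sets to belong to $\M_j$ cannot attain its maximum possible value (and hence is bounded by that value minus one). Concretely, \eqref{r1}--\eqref{r2} handle the ball-ball-ball case, split according to whether already a pairwise intersection is empty or only the triple is; \eqref{r3}--\eqref{r4} treat ball-ball-$\O$; \eqref{r5}--\eqref{r6} treat ball-$\O$-$\O$; and \eqref{r7} treats $\O$-$\O$-$\O$ (here all pairwise intersections are already guaranteed to be nonempty because pairs are subsumed by inequalities of lower type). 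For each inequality I would verify (i) validity: any $(z,x,X)$ feasible for~\eqref{NL} satisfies it, since the corresponding $X_j$ lies in the relevant nonempty intersection, and (ii) sufficiency: any $(z,x)$ satisfying all of \eqref{r1}--\eqref{r7} together with \eqref{c1:1b} induces only nonempty $\M_j$-sets.

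Finally, I would assemble these pieces: take an optimal $(z^\ast,x^\ast)$ of \eqref{IP}; by the sufficiency step, $\M_j(\mathbf{C}^\ast;\mathbf{K}^\ast)\neq\emptyset$ for every $j$; by Theorem~\ref{th:2} there exists $\mathbf{X}^\ast$ with $(z^\ast,x^\ast,\mathbf{X}^\ast)$ feasible for \eqref{NL} and with the same objective value; and by validity no optimal solution of \eqref{NL} is cut off, so the optimal values coincide. The recovery of the coordinates $X_j$ is then done by solving the convex feasibility problem \eqref{SP}.

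The main obstacle will be the bookkeeping of the case analysis. One must be careful that each inequality \eqref{r1}--\eqref{r7} encodes exactly the emptiness condition it is meant to encode, neither more nor less; in particular, the sub-case splits (``pairwise intersection empty'' versus ``only triple empty'') are essential to make the right-hand side of each inequality tight, since a single global inequality would either cut off feasible solutions or fail to exclude all infeasible ones. A second subtlety lies in the passage through Minkowski sums: the sufficiency of checking only sets of the form $\O_{i_1 i_2}$ (rather than arbitrary Minkowski sums $\bigl(\bigcap_{i\in C_k}\B_{R_i}(a_i)\bigr)\oplus\B_r(0)$) relies crucially on the preceding lemma that reduces such Minkowski sums to pairwise intersections of $\O$-sets, and this reduction has already been established, so it can be invoked directly.
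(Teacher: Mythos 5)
Your proposal is correct and follows essentially the same route as the paper: it chains Theorem~\ref{th:2} (projecting out the $X$-variables via nonemptiness of the $\M_j$-sets), the Minkowski-sum lemma reducing $\bigl(\bigcap_{i\in S}\B_{R_i}(a_i)\bigr)\oplus\B_r(0)$ to pairwise $\O$-sets, Helly's theorem in the plane (Corollary~\ref{th:hellyballs}) to restrict to triple intersections, and the case-by-case translation of the empty-triple conditions into the inequalities \eqref{r1}--\eqref{r7}. The paper itself offers no separate proof beyond ``summarizing the above comments,'' so your sketch is at the same level of detail and identifies the same subtleties (the pairwise-versus-triple sub-case splits and the reduction to $\O$-sets).
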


\subsection{Computational Performance of \eqref{IP}\label{subsec:3.2}}

The above linear integer programming formulation, although a compact model for MCLPIF with a polynomial number of constraints, more precisely a worst case $O(p^3 n^6)$ constraints, it has still large number of constraints, that to be added, intersections of balls and $\O$-sets have to be computed.

We have run a series of computational experiments in order to evaluate the performance of formulation \eqref{IP}. We consider a set of small-medium instances based on the classical planar $50$-points dataset in \cite{EWC74} (normalized to the unit square $[0,1]\times[0,1]$) by randomly generating $5$ samples of sizes in $\{10, 20\}$. The number of centers to be located, $p$, ranges in $\{2, 6, 10\}$. We consider the same radia for all the demand points and ranging in $\{0.1, 0.2, 0.3\}$ and the limit distance between linked facilities $r \in \{0.2, 0.5\}$. The graph structures that we analyze are \texttt{Comp}, \texttt{Cycle}, \texttt{Line}, \texttt{Matching}, \texttt{Star} and \texttt{Star-Ring} (see Section \ref{subgraphs}).

The models were coded in Python 3.7 and we use as optimization solver Gurobi 9.0 in a MacBook Pro with a Core i5 CPU clocked at 2 GHz and 8GB of RAM memory. A time limit of $1$ hour was fixed for all the instances. 
\begin{table}[h]
	\centering
	\adjustbox{max totalheight=0.53\textheight}{\begin{tabular}{|c|cc|rrr|}\hline
		
		\texttt{Graph} & \texttt{n} & \texttt{p} & \texttt{IP\_Time} & \texttt{ContrGen\_Time} & \texttt{Tot\_Time} \\\hline
		
		\multirow{5}{*}{\texttt{Complete}} & \multirow{2}{*}{10} & 2     & 0.0082 & 44.7288 & 44.7370 \\
		&       & 6     & 0.9954 & 54.1249 & 55.1202 \\\cline{2-6}
		& \multirow{3}{*}{20} & 2     & 0.2196 & 1751.4293 & 1751.6489 \\
		&       & 6     & 44.9028 & 1904.8394 & 1949.7423 \\
		&       & 10    & 77.9067 & 2348.7582 & 2426.6649 \\\hline
		
		\multirow{5}{*}{\texttt{Cycle}} & \multirow{2}{*}{10} & 2     & 0.0085 & 45.9517 & 45.9601 \\
		&       & 6     & 0.1064 & 44.1848 & 44.2912 \\\cline{2-6}
		& \multirow{3}{*}{20} & 2     & 0.2333 & 1753.7321 & 1753.9654 \\
		&       & 6     & 6.7979 & 1813.7292 & 1820.5271 \\
		&       & 10    & 10.2933 & 1887.9462 & 1898.2395 \\\hline
		
		\multirow{5}{*}{\texttt{Line}} & \multirow{2}{*}{10} & 2     & 0.0082 & 45.4434 & 45.4517 \\
		&       & 6     & 0.0621 & 43.6978 & 43.7599 \\\cline{2-6}
		& \multirow{3}{*}{20}& 2     & 0.2205 & 1752.3162 & 1752.5367 \\
		&       & 6     & 2.6830 & 1815.9641 & 1818.6471 \\
		&       & 10    & 5.8076 & 1870.6905 & 1876.4980 \\\hline
		
		\multirow{5}{*}{\texttt{Matching}} & \multirow{2}{*}{10} & 2     & 0.0076 & 42.4407 & 42.4483 \\
		&       & 6     & 0.0187 & 43.9403 & 43.9590 \\\cline{2-6}
		& \multirow{3}{*}{20} & 2     & 0.2162 & 1752.6441 & 1752.8604 \\
		&       & 6     & 0.7954 & 1763.0113 & 1763.8068 \\
		&       & 10    & 1.3902 & 1859.2795 & 1860.6697 \\\hline
		
		\multirow{5}{*}{\texttt{Star}} & \multirow{2}{*}{10} & 2     & 0.0096 & 45.0242 & 45.0338 \\
		&       & 6     & 0.1943 & 44.5491 & 44.7434 \\\cline{2-6}
		& \multirow{3}{*}{20} & 2     & 0.2526 & 2309.5271 & 2309.7796 \\
		&       & 6     & 24.8045 & 1788.9676 & 1813.7721 \\
		&       & 10    & 85.0088 & 2042.7413 & 2127.7501 \\\hline
		
		\multirow{5}{*}{\texttt{Star-Ring}} & \multirow{2}{*}{10} & 2     & 0.0162 & 45.6987 & 45.7150 \\
		&       & 6     & 0.4755 & 47.5454 & 48.0209 \\\cline{2-6}
		& \multirow{3}{*}{20} & 2     & 1.0006 & 2085.5267 & 2086.5274 \\
		&       & 6     & 43.4228 & 1867.6105 & 1911.0333 \\
		&       & 10    & 76.1610 & 2043.4512 & 2119.6122 \\\hline
	\end{tabular}}%
		\caption{Average computational times for solving \eqref{IP}.\label{tab:ovals1}}
\end{table}%
The obtained results are shown in tables \ref{tab:ovals1} and \ref{tab:ovals2}. In both tables, the results are organized by graph structure (\texttt{Graph}), number of demand points (\texttt{n}) and number of centers to be located (\texttt{p}). In Table \ref{tab:ovals1} we report the required computational times in seconds for solving the IP problem with Gurobi (\texttt{IP\_Time}), the time consumed to generate the constraints of \eqref{IP} (column \texttt{ContrGen\_Time} ) and the total time required for both tasks plus the time of solving the continuous nonlinear problem \eqref{p1:1}-\eqref{p1:3} (column \texttt{Tot\_Time}).

One can observe that the times for solving the IP instance are reasonable, but the times needed to compute all the intersections for the constraints of the formulation are huge compared to the size of the instances. Concretely, $98.9\%$ of the total time is consumed computing the intersections. Thus, this approach is clearly computationally inefficient, fact that is justified with the results shown in Table \ref{tab:ovals2}. There, we show the average number of constraints involving only balls (\texttt{\#Ball\_Ctrs}), the constraints involving $\O$-sets (\texttt{\#O\_Ctrs}) and the overall number of constraints in the problem (\texttt{\#All\_Ctrs}). We also report in column \texttt{\%OoM}, the percentage of instances that flagged ``Out Of Memory'' when trying to solve the problem. From the results, we conclude that even for the smallest instances, the number of constraints to be added is huge, being the number of linear constraints needed to reflect the nonlinear nature of the problem is excessive. In particular, the percentage of $\O$-sets constraints with respect the overall number of constraints is $89.5\%$.
\begin{table}[h]
	\centering
	\adjustbox{max totalheight=0.5\textheight}{\begin{tabular}{|c|cc|rrrr|}\hline
		
		\texttt{Graph} & $n$ & $p$ & \texttt{\#Ball\_Ctrs} & \texttt{\#O\_Ctrs} & \texttt{\#All\_Ctrs} & \texttt{\%OoM} \\\hline
		
		\multirow{5}{*}{\texttt{Complete}} & \multirow{2}{*}{10} & 2     & 155   & 1444  & 1612  &0\% \\
		&       & 6     & 2144  & 300697 & 302857 &0\% \\\cline{2-7}
		& \multirow{3}{*}{20} & 2     & 1114  & 42335 & 43471 &0\% \\
		&       & 6     & 15155 & 5817910 & 5833091 & 3.33\% \\
		&       & 10    & 13557 & 10092470 & 10106057 & 50.00\% \\\hline
		
		\multirow{5}{*}{\texttt{Cycle}} & \multirow{2}{*}{10} & 2     & 155   & 1444  & 1612  &0\% \\
		&       & 6     & 806   & 32151 & 32973 &0\% \\\cline{2-7}
		& \multirow{3}{*}{20} & 2     & 1114  & 42233 & 43369 &0\% \\
		&       & 6     & 6159  & 719891 & 726076 &0\% \\
		&       & 10    & 10265 & 1258465 & 1268761 &0\% \\\hline
		
		\multirow{5}{*}{\texttt{Line}} & \multirow{2}{*}{10} & 2     & 155   & 1444  & 1612  &0\% \\
		&       & 6     & 693   & 20837 & 21546 &0\% \\\cline{2-7}
		& \multirow{3}{*}{20} & 2     & 1114  & 42450 & 43586 &0\% \\
		&       & 6     & 5220  & 356676 & 361922 &0\% \\
		&       & 10    & 9326  & 757653 & 767009 &0\% \\\hline
		
		\multirow{5}{*}{\texttt{Matching}} & \multirow{2}{*}{10} & 2     & 155   & 1444  & 1612  &0\% \\
		&       & 6     & 466   & 6832  & 7314  &0\% \\\cline{2-7}
		& \multirow{3}{*}{20} & 2     & 1114  & 41712 & 42848 &0\% \\
		&       & 6     & 3341  & 135866 & 139233 &0\% \\
		&       & 10    & 5569  & 261504 & 267102 &0\% \\\hline
		
		\multirow{5}{*}{\texttt{Star}} & \multirow{2}{*}{10} & 2     & 155   & 1444  & 1612  &0\% \\
		&       & 6     & 746   & 51232 & 51994 &0\% \\\cline{2-7}
		& \multirow{3}{*}{20} & 2     & 1114  & 48777 & 49913 &0\% \\
		&       & 6     & 5415  & 1065515 & 1070956 &0\% \\
		&       & 10    & 9623  & 4405466 & 4415119 & 6.67\% \\\hline
		
		\multirow{5}{*}{\texttt{Star-Ring}} & \multirow{2}{*}{10} & 2     & 212   & 5090  & 5315  &0\% \\
		&       & 6     & 1357  & 117151 & 118524 &0\% \\\cline{2-7}
		& \multirow{3}{*}{20} & 2     & 1583  & 178208 & 179813 &0\% \\
		&       & 6     & 10274 & 2621104 & 2631404 &0\% \\
		&       & 10    & 17523 & 6203147 & 6220700 & 6.67\% \\\hline
	\end{tabular}}%
		\caption{Average number of constraints of \eqref{IP}.\label{tab:ovals2}}%
\end{table}%
 To avoid the inconvenience of generating such an amount of constraints, in the following section, we propose two branch-\&-cut strategies for solving MCLPIF which initially only use a part of the constraints in \eqref{IP} and generate the rest as needed, resulting in more efficient solution approach.

\section{Branch-\&-Cut Approaches for the MCLPIF}\label{sec:4}

In this section we propose two exact methodologies for solving the planar MCLPIF that, instead of incorporating initially all the constraints \eqref{r1}-\eqref{r7}, the are incorporated on-the-fly as needed. The different approaches differ on the incomplete formulation which is considered.

\subsection{Incomplete formulation 1}

On the one hand we consider the following relaxed version of our problem:
          \begin{subequations}
    \makeatletter
        \def\@currentlabel{${\rm INC}^1$}
        \makeatother
       \label{IC1}
        \renewcommand{\theequation}{${\rm INC}^1_{\arabic{equation}}$}
\begin{align}
	\max & \dsum_{i\in N}  \dsum_{j \in P} \omega_i z_{ij} \label{c3:0}\\
	\mbox{s.t. } & \eqref{r1}, \eqref{r2},\\
	&\dsum_{j \in P} z_{ij} \leq 1, \forall i \in N, \label{c3:1}\\
			& x \in \mathcal{S}(G),\nonumber\
			& z_{ij}\in \{0,1\}, \forall i \in N, j \in P,\nonumber\\
			& x_{jk}\in \{0,1\}, \forall j, k \in P. \nonumber
\end{align}
\end{subequations}

In the above formulation, only the covering constraints are considered. It implies that the customers are clustered such that a set of $p$ facilities can be built to satisfy the users' demands, but it is not assured that the facilities can be adequately interconnected within a given distance $r$. Clearly, this formulation is a relaxation of the planar MCLPIF since the requirements on the maximum distance between linked facilities are not imposed. However, in problems in which the radius $r$ is large, compared to the $R_i$'s, the constraints induced by the $\O$-sets are only a few, and it may be more appropriate to incorporate them as long as they are violated. In order to check of a solution $(\bar z, \bar x)$ is valid or not for the planar MCLPIF we use the following auxiliary problem:
\begin{align}
\rho(\bar z, \bar x) = \min \dsum_{j \in P} \dsum_{k \in K_j} q_{jk}\\
\mbox{s.t. } & \d(X_j,X_k) \leq r + q_{jk}, \forall j \in P, k \in K_j,\\
& \d(X_j,a_i) \leq R_i, \forall j \in P, i \in C_j,\\
& q_{jk} \geq 0, \forall j\in P, k\in K_j,\\
& X_1,\ldots, X_p \in \R^2.
\end{align}  
where $K_j=\{k \in P: \bar x_{jk}=1\}$ and $C_j=\{i \in N: \bar z_{ij}=1\}$. This problem can be reformulated (for $\ell_\tau$-norms or polyhedral norms) as a Second Order Cone Programming problem, and it can be efficiently solved by interior points methods, which are implemented in the most popular commercial solvers.

In case $\rho(\bar z, \bar x)=0$, then, the obtained solution is feasible for the MCLPIF. Otherwise, the following cut is added to the incomplete formulation:
\begin{equation}\label{cuts}
\dsum_{j \in P} \dsum_{i \in C_j} z_{ij} + \dsum_{j \in P} \dsum_{k \in K_j} x_{jk} \leq \dsum_{j\in P} ( |C_j| + |K_j|) - 1.
\end{equation}

The advantage of this incomplete formulation is that only a few constraints (compared to \eqref{r3}-\eqref{r7}) are incorporated to the problem, and also the checking of empty intersection of balls and $\O$-sets is avoided. In contrast, if many cuts in the form \eqref{cuts} have to be incorporated, the solution process may slow down.

Note that the above incomplete formulation (and the separation problem) is valid for any distance measure $\d$ on the plane, provided that one is able to generate constraints \eqref{r1} and \eqref{r2}, i.e., checking intersection of two and three $\d$-balls.

\subsection{Incomplete formulation 2}

The second formulation consist of considering a relaxed version of the sets $\M_j$, as stated in the following result, replacing constraints \eqref{r3}-\eqref{r6} by using the following property already proved above:
$$
\M_j \subset \L_j := \bigcap_{i \in C_j} \B_{R_i}(a_i) \cap \bigcap_{k\in K_j} \Big(\bigcap_{i \in C_k} \B_{R_i+r}(a_i)\Big),
$$
for all $j\in P$.

Then, for $i, i_1, \ldots, i_6 \in N$, we consider the following constraints, imposing incompatibility of covered points and links based on $\L_j$ instead of $\M_j$:
\begin{itemize}
\item[] If $\B_{R_{i_1}}(a_{i_1}) \cap \B_{R_{i_2}+r}(a_{i_2}) = \emptyset$:
	\begin{equation}\label{r3p}\tag{${\rm Int}_3^\prime$}
	 z_{i_1 j}+z_{{i_2}k} + x_{jk} \leq 2, \forall j, k \in P,
	\end{equation}
\item[] If $\B_{R_{i_1}+r}(a_{i_1}) \cap \B_{R_{i_2}+r}(a_{i_2}) = \emptyset$:
	\begin{equation}\label{r4p}\tag{${\rm Int}_4^\prime$}
	 z_{{i_1}k_1}+z_{{i_2}k_2}+ x_{jk_1} + x_{jk_2} \leq 3, \forall j, k \in P,
	\end{equation}
\item[] If $\B_{R_{i_1}}(a_{i_1}) \cap \B_{R_{i_2}}(a_{i_2}) \cap \B_{R_{i_3}+r}(a_{i_3}) = \emptyset$:
	\begin{equation}\label{r5p}\tag{${\rm Int}_5^\prime$}
	z_{i_1j} + z_{i_2 j} + z_{i_3 k} + x_{jk} \leq 3, \forall j, k \in P.
	\end{equation}
\item[] If $\B_{R_{i_1}}(a_{i_1}) \cap \B_{R_{i_2}+r}(a_{i_2}) \cap \B_{R_{i_3}+r}(a_{i_3}) = \emptyset$:
	\begin{equation}\label{r6p}\tag{${\rm Int}_6\prime$}
	z_{i_1j} + z_{{i_2}k_1}+z_{{i_3}k_2} + x_{jk_1}+ x_{jk_2} \leq 4, \forall j, k_1, k_2 \in P,
	\end{equation}
\item[] If $\B_{R_{i_1}+r}(a_{i_1}) \cap \B_{R_{i_2}+r}(a_{i_2}) \cap \B_{R_{i_3}+r}(a_{i_3}) = \emptyset$:
	\begin{equation}\label{r7p}\tag{${\rm Int}_7^\prime$}
	z_{{i_1}k_1}+z_{{i_2}k_2}+z_{{i_3}k_3} + x_{jk_1}+ x_{jk_2}+x_{jk_3} \leq 5, \forall j, k_1, k_2, k_3 \in P.
	\end{equation}
\end{itemize}

Since $\M_j$ is strictly contained in $\L_j$, except in case $C_j$ is a singleton, the above constraints are just a relaxation of \eqref{r3}-\eqref{r7}. Thus, the following formulation is just an incomplete formulation for the MCLPIF:
          \begin{subequations}
    \makeatletter
        \def\@currentlabel{${\rm INC}^2$}
        \makeatother
       \label{IC2}
        \renewcommand{\theequation}{${\rm INC}^2_{\arabic{equation}}$}
\begin{align}
	\max & \dsum_{i\in N}  \dsum_{j \in P} \omega_i z_{ij} \label{c3:0}\\
	\mbox{s.t. } & \eqref{r1}, \eqref{r2},\\
	& \eqref{r3p}-\eqref{r7p},\\
	&\dsum_{j \in P} z_{ij} \leq 1, \forall i \in N, \label{c3:1}\\
			& x \in \mathcal{S}(G),\nonumber\\
			& z_{ij}\in \{0,1\}, \forall i \in N, j \in P, \nonumber\\
			& x_{jk}\in \{0,1\}, \forall j, k \in P. \nonumber
\end{align}
\end{subequations}
After solving the above problem, the test to check the validity of the formulation has to be run, i.e., solve the auxiliary problem to obtain $\rho(z,x)$, and add, if necessary, constraints \eqref{cuts}. The main advantage of this incomplete formulation compared to \eqref{IC1}, is that the representation of the feasible region is \textit{closer} to the exact representation of the problem, and then, in theory, a smaller number of constraints in the form \eqref{cuts} has to be added, implying solving a smaller number of IP problems.

Note that, as in the previous incomplete formulation , \eqref{IC2} is also valid for any ($\ell_\tau$ or polyhedral) distance measure $\d$ on the plane, but one has to be able to check intersections of two and three $\d$-balls (with different radia). In contrast, \eqref{IP} requires also checking intersections of $\O$-sets, which are more difficult to handle.

\section{Computational Experiments}\label{sec:5}

In this section we report the results of our computational experience in order to evaluate the performance of the different approaches for solving the MCLPIF. The experiments have been designed similarly to those reported in Section \ref{subsec:3.2} and run in the same computer. In particular, we use again the test instances based on the classical $50$-points dataset in \cite{EWC74} (normalized to the unit square $[0,1]\times[0,1]$) by randomly generating $5$ samples for sizes in $\{10, 20, 30, 40\}$, and the whole dataset of size $50$. The number of centers to be located, $p$, ranges in $\{2, 6, 10\}$. We consider, again, the same radia, $R_i$, for all the demand points and ranging in $\{0.1,0.2,0.3\}$ and the limit distance between linked facilities $r \in \{0.2,0.5\}$. The graph structures the we analyze are \texttt{Comp} (complete), \texttt{Cycle}, \texttt{Line}, \texttt{Matching}, \texttt{Star} and \texttt{Star-Ring}. In total, $2088$ instances were generated. A time limit of one hour was fixed for all the instances.

The detailed obtained results of our computational experiments as well as the used test instances are available through the \texttt{github} repository \url{github.com/vblancoOR/mclpif}.

In Figure \ref{fig:eilon} we illustrate optimal solutions for the $50$-points instance with $p=10$, $R_i=0.2$ and $r=0.5$ for the different graph structures.

\begin{figure}[h]
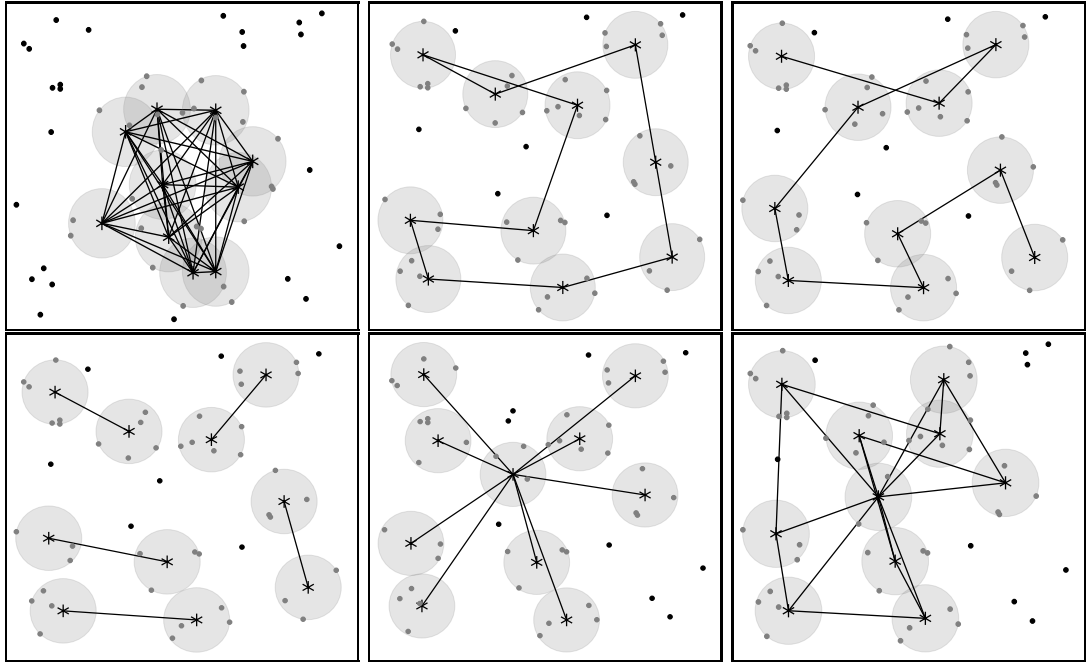

\begin{center}
\fbox{\resizebox{0.3\textwidth}{0.28\textwidth}{\input{Eilon50_Comp}}}~\fbox{\resizebox{0.3\textwidth}{0.28\textwidth}{\input{Eilon50_Cycle}}}~
\fbox{\resizebox{0.3\textwidth}{0.28\textwidth}{\input{Eilon50_Line}}}\\\fbox{\resizebox{0.3\textwidth}{0.28\textwidth}{\input{Eilon50_Matching}}}~
\fbox{\resizebox{0.3\textwidth}{0.28\textwidth}{\input{Eilon50_Star}}}~\fbox{\resizebox{0.3\textwidth}{0.28\textwidth}{\input{Eilon50_Star-Ring}}}
\caption{Optimal solutions for one configuration of parameters for the $50$-points instance ($p=10$, $R_i=0.2$ and $r=0.5$). From top left to bottom right: \texttt{Complete}, \texttt{Cycle}, \texttt{Line},\texttt{Matching}, \texttt{Star} and \texttt{Star-Ring}.\label{fig:eilon}}
\end{center}
\end{figure}

Since we have already reported the results of the approach based on formulation \eqref{IP} for the small instances (see Section \ref{subsec:3.2}), we analyze now the rest of the formulations, namely:
\begin{itemize}
\item \texttt{NL}: Compact Mixed Integer Non Linear formulation \eqref{NL}.
\item \texttt{INC}$_1$: Incomplete Integer Programming formulation \eqref{IC1}.
\item \texttt{INC}$_2$: Incomplete Integer Programming formulation \eqref{IC2}.
\end{itemize}
In tables \ref{t:times1} and \ref{t:times2} we show the average CPU times (in seconds) in columns \ref{NL}\texttt{\_Time}, \texttt{\ref{IC1}\_Time} and \texttt{\ref{IC2}\_Time} for each of the three approaches; and the percentage of unsolved instances (out of 30) in columns \ref{NL}\texttt{\_US}, \texttt{\ref{IC1}\_US} and \texttt{\ref{IC2}\_US}. The results are aggregated by graph type (\texttt{Graph}), number of demand points (\texttt{n}) and number of centers to be located (\texttt{p}). 

As can be observed, the incomplete formulations clearly outperform (in CPU time) the Mixed Integer Programming Formulation \eqref{NL}. In $83.25\%$ of the instances the CPU times required with the Incomplete formulations are smaller than the CPU times required by the MISOCO formulation. The overall average of the CPU times for \eqref{NL} was $616.41$ seconds, while for \eqref{IC1} 276.33 seconds, i.e., in average, the incomplete formulation required almost half of the time that the non linear formulation. Concerning the two incomplete formulations, one can observe that the results are similar, almost identical, in both CPU times and percentage of unsolved instances. As we will see with the rest of the tables (number of cuts and MIP Gaps), the results are also similar for these two incomplete formulations. Concerning the percentage of unsolved instances, the nonlinear formulation was not able to solve $292$ if the instances while the incomplete formulations only $139$. For instance, for the complete graph structure with $10$ facilities and $n = 50$ we obtained that none of the $30$ instances were solved within the time limit while incomplete formulations solved each of them in at most $96.83$ seconds. In contrast to formulation IP in which $98\%$ of the CPU time was consumed checking intersection and loading constraints to the models, in the incomplete formulations this percentage is approximately $50\%$ in average in all the instances.

In tables \ref{t:cuts1} and \ref{t:cuts2} we report the number of cuts required to solve the problem up to optimality in the branch-\&-cut approaches (columns \texttt{\ref{IC1}\_Cuts} and \texttt{\ref{IC2}\_Cuts}) and also the MIP Gaps obtained at the end of the time limit. As already mentioned, the number of cuts of the two branch-\&-cut approaches are almost identical. The MIP Gaps of the compact approach are also greater than those obtained with the branch-\&-cut approaches. In particular, in $87.35\%$ of the instances, the compact formulation obtained higher gaps than the incomplete formulations.

Finally, in Figure \ref{fig:times} we show the comparison on the performance of the CPU Times averaged by number of demand points for each of the graph types for both the Non Linear formulation and the incomplete IP formulation \eqref{IC1}, showing again that our approaches outperform the MINLP approach in most of the instances.
\begin{table}[H]
\centering
\adjustbox{max totalheight=0.9\textheight}{
	\begin{tabular}{|c|cr|rrr|rrr|}
		\hline
		\texttt{Graph} & $n$     & \multicolumn{1}{c|}{$p$} & \ref{NL}\texttt{\_Time} & \texttt{\ref{IC1}\_Time} & \texttt{\ref{IC2}\_Time} & \ref{NL}\texttt{\_US} & \ref{IC1}\texttt{\_US} & \ref{IC2}\texttt{\_US} \\\hline
		
		\multirow{14}{*}{\texttt{Comp}} & \multirow{2}[2]{*}{10} & 2     & 0.1342 & 0.0544 & 0.0567 &0\% &0\% &0\% \\
		&       & 6     & 2.6232 & 0.1151 & 0.1148 &0\% &0\% &0\% \\
		\cline{2-9}          & \multirow{3}[2]{*}{20} & 2     & 0.4766 & 0.2818 & 0.2807 &0\% &0\% &0\% \\
		&       & 6     & 29.0531 & 121.5217 & 121.5019 &0\% & 3.33\% & 3.33\% \\
		&       & 10    & 1549.7035 & 124.1093 & 123.8393 & 40.00\% & 3.33\% & 3.33\% \\
		\cline{2-9}          & \multirow{3}[2]{*}{30} & 2     & 2.6924 & 13.9745 & 13.9370 &0\% &0\% &0\% \\
		&       & 6     & 501.6179 & 304.3526 & 303.7910 &0\% & 6.67\% & 6.67\% \\
		&       & 10    & 3295.4696 & 246.1533 & 246.1096 & 90.00\% & 6.67\% & 6.67\% \\
		\cline{2-9}          & \multirow{3}[2]{*}{40} & 2     & 6.5542 & 115.7206 & 114.5961 &0\% &0\% &0\% \\
		&       & 6     & 2216.6469 & 576.8230 & 576.9015 & 43.33\% & 13.33\% & 13.33\% \\
		&       & 10    & 3360.1835 & 676.3061 & 678.1579 & 93.33\% & 13.33\% & 13.33\% \\
		\cline{2-9}          & \multirow{3}[2]{*}{50} & 2     & 10.5336 & 3.4213 & 3.5767 &0\% &0\% &0\% \\
		&       & 6     & 1321.6909 & 6.0239 & 6.2496 & 25.00\% &0\% &0\% \\
		&       & 10    & \texttt{TL} & 36.6441 & 36.9275 & 100.00\% &0\% &0\% \\
		\hline
		\multirow{14}{*}{\texttt{Cycle}} & \multirow{2}[2]{*}{10} & 2     & 0.1285 & 0.0552 & 0.0540 &0\% &0\% &0\% \\
		&       & 6     & 1.7382 & 0.5703 & 0.5697 &0\% &0\% &0\% \\
		\cline{2-9}          & \multirow{3}[2]{*}{20} & 2     & 0.4892 & 0.2753 & 0.2859 &0\% &0\% &0\% \\
		&       & 6     & 22.6150 & 272.4462 & 271.8050 &0\% & 6.67\% & 6.67\% \\
		&       & 10    & 141.4470 & 243.6100 & 243.7302 & 3.33\% & 6.67\% & 6.67\% \\
		\cline{2-9}          & \multirow{3}[2]{*}{30} & 2     & 2.6347 & 14.1975 & 14.0838 &0\% &0\% &0\% \\
		&       & 6     & 574.2565 & 964.3090 & 964.0870 & 6.67\% & 26.67\% & 26.67\% \\
		&       & 10    & 1212.6542 & 364.0570 & 363.7425 & 33.33\% & 10.00\% & 10.00\% \\
		\cline{2-9}          & \multirow{3}[2]{*}{40} & 2     & 6.5216 & 116.0356 & 115.2329 &0\% &0\% &0\% \\
		&       & 6     & 937.6188 & 730.6635 & 730.5040 & 16.67\% & 20.00\% & 20.00\% \\
		&       & 10    & 1337.4399 & 897.3624 & 896.9741 & 33.33\% & 20.00\% & 20.00\% \\
		\cline{2-9}          & \multirow{3}[2]{*}{50} & 2     & 10.6379 & 603.8272 & 604.0071 &0\% & 16.67\% & 16.67\% \\
		&       & 6     & 1735.0541 & 652.7675 & 653.8945 & 33.33\% & 16.67\% & 16.67\% \\
		&       & 10    & 1805.1871 & 1207.8929 & 1208.5188 & 50.00\% & 33.33\% & 33.33\% \\
		\hline
		\multirow{14}{*}{\texttt{Line}} & \multirow{2}[2]{*}{10} & 2     & 0.1293 & 0.0558 & 0.0617 &0\% &0\% &0\% \\
		&       & 6     & 1.0620 & 0.1106 & 0.1167 &0\% &0\% &0\% \\
		\cline{2-9}          & \multirow{3}[2]{*}{20} & 2     & 0.4907 & 0.2810 & 0.2912 &0\% &0\% &0\% \\
		&       & 6     & 22.5330 & 8.6143 & 8.5864 &0\% &0\% &0\% \\
		&       & 10    & 137.5967 & 1.9760 & 1.9692 & 3.33\% &0\% &0\% \\
		\cline{2-9}          & \multirow{3}[2]{*}{30} & 2     & 2.6478 & 14.0298 & 13.9174 &0\% &0\% &0\% \\
		&       & 6     & 652.1307 & 1031.5471 & 1031.0384 & 16.67\% & 26.67\% & 26.67\% \\
		&       & 10    & 1216.6928 & 243.6303 & 243.6503 & 33.33\% & 6.67\% & 6.67\% \\
		\cline{2-9}          & \multirow{3}[2]{*}{40} & 2     & 6.4847 & 116.3607 & 115.2736 &0\% &0\% &0\% \\
		&       & 6     & 899.5291 & 847.3092 & 847.2556 & 16.67\% & 23.33\% & 23.33\% \\
		&       & 10    & 1232.7082 & 537.2709 & 537.7941 & 33.33\% & 10.00\% & 10.00\% \\
		\cline{2-9}          & \multirow{3}[2]{*}{50} & 2     & 10.6999 & 604.2385 & 604.3965 &0\% & 16.67\% & 16.67\% \\
		&       & 6     & 1836.1251 & 609.0170 & 610.4558 & 50.00\% & 16.67\% & 16.67\% \\
		&       & 10    & 1284.3275 & 796.5951 & 806.4829 & 33.33\% &0\% &0\% \\
		\hline
	\end{tabular}%
	}
\caption{Averaged CPU times and percentages of unsolved instances for graphs \texttt{Comp}, \texttt{Cycle} and \texttt{Line}.\label{t:times1}}%
\end{table}%
\begin{table}[H]
\centering
\adjustbox{max totalheight=0.9\textheight}{
	\begin{tabular}{|c|cr|rrr|rrr|}
		\hline
		\texttt{Graph} & $n$     & \multicolumn{1}{c|}{$p$} & \multicolumn{1}{c}{\ref{NL}\texttt{\_Time}} & \multicolumn{1}{c}{\texttt{\ref{IC1}\_Time}} & \multicolumn{1}{c|}{\texttt{\ref{IC2}\_Time}} & \multicolumn{1}{c}{\ref{NL}\texttt{\_US}} & \multicolumn{1}{c}{\ref{IC1}\texttt{\_US}} & \multicolumn{1}{c|}{\ref{IC2}\texttt{\_US}} \\\hline
		
		\multirow{14}{*}{\texttt{Matching}} & \multirow{2}[2]{*}{10} & 2     & 0.1272 & 0.0535 & 0.0564 &0\% &0\% &0\% \\
		&       & 6     & 0.7815 & 0.0489 & 0.0490 &0\% &0\% &0\% \\
		\cline{2-9}          & \multirow{3}[2]{*}{20} & 2     & 0.4781 & 0.2782 & 0.2815 &0\% &0\% &0\% \\
		&       & 6     & 58.1331 & 0.3755 & 0.3777 &0\% &0\% &0\% \\
		&       & 10    & 258.4180 & 0.3421 & 0.3558 & 3.33\% &0\% &0\% \\
		\cline{2-9}          & \multirow{3}[2]{*}{30} & 2     & 2.6646 & 14.0095 & 13.9645 &0\% &0\% &0\% \\
		&       & 6     & 822.5527 & 136.4736 & 136.5727 & 16.67\% & 3.33\% & 3.33\% \\
		&       & 10    & 1203.3864 & 3.7853 & 3.8403 & 33.33\% &0\% &0\% \\
		\cline{2-9}          & \multirow{3}[2]{*}{40} & 2     & 6.4707 & 115.6995 & 114.5932 &0\% &0\% &0\% \\
		&       & 6     & 1357.2707 & 501.4939 & 501.4081 & 33.33\% & 13.33\% & 13.33\% \\
		&       & 10    & 1208.4601 & 189.9502 & 189.9281 & 33.33\% & 3.33\% & 3.33\% \\
		\cline{2-9}          & \multirow{3}[2]{*}{50} & 2     & 10.5842 & 603.9967 & 603.8510 &0\% & 16.67\% & 16.67\% \\
		&       & 6     & 1656.9411 & 319.2360 & 319.8650 & 33.33\% &0\% &0\% \\
		&       & 10    & 1228.5380 & 153.5888 & 155.1129 & 33.33\% &0\% &0\% \\
		\hline
		\multirow{14}{*}{\texttt{Star}} & \multirow{2}[2]{*}{10} & 2     & 0.1573 & 0.0571 & 0.0597 &0\% &0\% &0\% \\
		&       & 6     & 1.2605 & 122.0231 & 122.0039 &0\% & 3.33\% & 3.33\% \\
		\cline{2-9}          & \multirow{3}[2]{*}{20} & 2     & 0.4963 & 0.2842 & 0.2870 &0\% &0\% &0\% \\
		&       & 6     & 39.9333 & 242.0655 & 242.0854 &0\% & 6.67\% & 6.67\% \\
		&       & 10    & 945.8562 & 243.5264 & 243.6191 & 20.00\% & 6.67\% & 6.67\% \\
		\cline{2-9}          & \multirow{3}[2]{*}{30} & 2     & 2.6683 & 14.3389 & 14.1774 &0\% &0\% &0\% \\
		&       & 6     & 764.3177 & 130.0206 & 129.9031 & 20.00\% & 3.33\% & 3.33\% \\
		&       & 10    & 1985.6919 & 843.4682 & 843.3631 & 53.33\% & 23.33\% & 23.33\% \\
		\cline{2-9}          & \multirow{3}[2]{*}{40} & 2     & 6.4678 & 116.7553 & 115.2507 &0\% &0\% &0\% \\
		&       & 6     & 1122.4528 & 952.7522 & 951.6508 & 16.67\% & 23.33\% & 23.33\% \\
		&       & 10    & 2165.0432 & 876.6598 & 876.5971 & 60.00\% & 23.33\% & 23.33\% \\
		\cline{2-9}          & \multirow{3}[2]{*}{50} & 2     & 10.9528 & 604.4651 & 603.8411 &0\% & 16.67\% & 16.67\% \\
		&       & 6     & 1693.4531 & 607.4625 & 608.5091 & 33.33\% & 16.67\% & 16.67\% \\
		&       & 10    & 2401.2075 & 1276.8858 & 1277.4972 & 66.67\% & 33.33\% & 33.33\% \\
		\hline
		\multirow{14}{*}{\texttt{Star-Ring}} & \multirow{2}[2]{*}{10} & 2     & 0.1104 & 0.0593 & 0.0580 &0\% &0\% &0\% \\
		&       & 6     & 1.3273 & 2.4013 & 2.4025 &0\% &0\% &0\% \\
		\cline{2-9}          & \multirow{3}[2]{*}{20} & 2     & 0.3049 & 0.2880 & 0.2889 &0\% &0\% &0\% \\
		&       & 6     & 13.9281 & 190.3787 & 189.4114 &0\% & 3.33\% & 3.33\% \\
		&       & 10    & 231.6212 & 259.1514 & 258.7089 & 3.33\% & 6.67\% & 6.67\% \\
		\cline{2-9}          & \multirow{3}[2]{*}{30} & 2     & 0.7661 & 14.6793 & 14.6180 &0\% &0\% &0\% \\
		&       & 6     & 436.0929 & 965.7536 & 965.9031 & 6.67\% & 26.67\% & 26.67\% \\
		&       & 10    & 1865.2045 & 800.0823 & 799.9905 & 40.00\% & 20.00\% & 20.00\% \\
		\cline{2-9}          & \multirow{3}[2]{*}{40} & 2     & 1.8539 & 119.0502 & 118.0972 &0\% &0\% &0\% \\
		&       & 6     & 853.0198 & 741.5927 & 741.8315 & 13.33\% & 20.00\% & 20.00\% \\
		&       & 10    & 1889.9757 & 1036.1564 & 1036.8029 & 50.00\% & 26.67\% & 26.67\% \\
		\cline{2-9}          & \multirow{3}[2]{*}{50} & 2     & 5.9508 & 603.9291 & 603.9896 &0\% & 16.67\% & 16.67\% \\
		&       & 6     & 1434.6837 & 643.9400 & 644.1840 & 33.33\% & 16.67\% & 16.67\% \\
		&       & 10    & 1812.6836 & 676.1130 & 677.3516 & 50.00\% & 16.67\% & 16.67\% \\
		\hline
	\end{tabular}%
}
\caption{Averaged CPU times and percentages of unsolved instances for graphs \texttt{Matching}, \texttt{Star} and \texttt{Star-Ring}.\label{t:times2}}%
\end{table}%
\begin{table}[H]
	\centering
	\adjustbox{max totalheight=0.9\textheight}{
	\begin{tabular}{|c|cr|rr|rrr|}
		\hline
		\texttt{Graph} & $n$     & $p$ & \texttt{\#Cuts\_\eqref{IC1}} &\texttt{\#Cuts\_\eqref{IC2}} & \texttt{gap\_NL} & \texttt{gap\_\eqref{IC1}} & \texttt{gap\_\eqref{IC2}}\\
		\hline
		\multirow{14}{*}{\texttt{Comp}} & \multirow{2}[2]{*}{10} & 2     & 3     & 3     &0\% &0\% &0\% \\
		&       & 6     & 0     & 0     & 30.00\% & 30.00\% & 30.00\% \\
		\cline{2-8}          & \multirow{3}[2]{*}{20} & 2     & 8     & 8     &0\% &0\% &0\% \\
		&       & 6     & 1984  & 2009  &0\% &0\% &0\% \\
		&       & 10    & 704   & 700   & 25.24\% & 23.67\% & 23.67\% \\
		\cline{2-8}          & \multirow{3}[2]{*}{30} & 2     & 489   & 489   &0\% &0\% &0\% \\
		&       & 6     & 4199  & 4236  &0\% & 0.44\% & 0.44\% \\
		&       & 10    & 1509  & 1528  & 14.07\% & 0.48\% & 0.48\% \\
		\cline{2-8}          & \multirow{3}[2]{*}{40} & 2     & 1754  & 1754  &0\% &0\% &0\% \\
		&       & 6     & 7187  & 7235  & 3.95\% & 0.74\% & 0.74\% \\
		&       & 10    & 3895  & 3890  & 17.43\% & 0.94\% & 0.94\% \\
		\cline{2-8}          & \multirow{3}[2]{*}{50} & 2     & 7452  & 7512  &0\% & 0.62\% & 0.62\% \\
		&       & 6     & 7012  & 6890  & 13.19\% & 0.46\% & 0.46\% \\
		&       & 10    & 154   & 154   & 32.99\% &0\% &0\% \\
		\hline
		\multirow{14}{*}{\texttt{Cycle}} & \multirow{2}[2]{*}{10} & 2     & 3     & 3     &0\% &0\% &0\% \\
		&       & 6     & 25    & 25    & 13.33\% & 13.33\% & 13.33\% \\
		\cline{2-8}          & \multirow{3}[2]{*}{20} & 2     & 8     & 8     &0\% &0\% &0\% \\
		&       & 6     & 4657  & 4736  &0\% &0\% &0\% \\
		&       & 10    & 3568  & 3614  &0\% &0\% &0\% \\
		\cline{2-8}          & \multirow{3}[2]{*}{30} & 2     & 489   & 489   &0\% &0\% &0\% \\
		&       & 6     & 11435 & 11554 & 1.55\% & 4.16\% & 4.16\% \\
		&       & 10    & 3665  & 3700  & 3.72\% & 0.47\% & 0.47\% \\
		\cline{2-8}          & \multirow{3}[2]{*}{40} & 2     & 1754  & 1754  &0\% &0\% &0\% \\
		&       & 6     & 6228  & 6232  & 7.23\% & 4.19\% & 4.19\% \\
		&       & 10    & 7852  & 7891  & 9.87\% & 8.83\% & 8.83\% \\
		\cline{2-8}          & \multirow{3}[2]{*}{50} & 2     & 7446  & 7479  &0\% & 0.62\% & 0.62\% \\
		&       & 6     & 5696  & 5620  & 13.94\% & 2.78\% & 2.78\% \\
		&       & 10    & 4820  & 4777  & 14.38\% & 11.54\% & 11.54\% \\
		\hline
		\multirow{14}{*}{\texttt{Line}} & \multirow{2}[2]{*}{10} & 2     & 3     & 3     &0\% &0\% &0\% \\
		&       & 6     & 3     & 3     & 6.67\% & 6.67\% & 6.67\% \\
		\cline{2-8}          & \multirow{3}[2]{*}{20} & 2     & 8     & 8     &0\% &0\% &0\% \\
		&       & 6     & 317   & 317   &0\% &0\% &0\% \\
		&       & 10    & 38    & 38    &0\% &0\% &0\% \\
		\cline{2-8}          & \multirow{3}[2]{*}{30} & 2     & 489   & 489   &0\% &0\% &0\% \\
		&       & 6     & 12298 & 12395 & 3.20\% & 1.01\% & 1.01\% \\
		&       & 10    & 2773  & 2809  & 3.72\% & 0.23\% & 0.23\% \\
		\cline{2-8}          & \multirow{3}[2]{*}{40} & 2     & 1754  & 1754  &0\% &0\% &0\% \\
		&       & 6     & 7340  & 7363  & 7.86\% & 2.49\% & 2.49\% \\
		&       & 10    & 4916  & 4943  & 8.26\% & 0.26\% & 0.26\% \\
		\cline{2-8}          & \multirow{3}[2]{*}{50} & 2     & 7446  & 7504  &0\% & 0.62\% & 0.62\% \\
		&       & 6     & 5499  & 5395  & 16.97\% & 2.45\% & 2.45\% \\
		&       & 10    & 3063  & 3063  & 10.52\% &0\% &0\% \\
		\hline

\end{tabular}%
}
\caption{Averaged number of cuts in the incomplete formulations and MIP gaps for graphs \texttt{Comp}, \texttt{Cycle} and \texttt{Line}.\label{t:cuts1}}
\end{table}%
\begin{table}[H]
	\centering
	\adjustbox{max totalheight=0.9\textheight}{
		\begin{tabular}{|c|cr|rr|rrr|}
			\hline
		\texttt{Graph} & $n$     & $p$ & \texttt{\#Cuts\_\eqref{IC1}} &\texttt{\#Cuts\_\eqref{IC2}} & \texttt{gap\_NL} & \texttt{gap\_\eqref{IC1}} & \texttt{gap\_\eqref{IC2}}\\
			\hline
		\multirow{14}{*}{\texttt{Matching}} & \multirow{2}[2]{*}{10} & 2     & 3     & 3     &0\% &0\% &0\% \\
		&       & 6     & 0     & 0     &0\% &0\% &0\% \\
		\cline{2-8}          & \multirow{3}[2]{*}{20} & 2     & 8     & 8     &0\% &0\% &0\% \\
		&       & 6     & 7     & 7     &0\% &0\% &0\% \\
		&       & 10    & 1     & 1     &0\% &0\% &0\% \\
		\cline{2-8}          & \multirow{3}[2]{*}{30} & 2     & 489   & 489   &0\% &0\% &0\% \\
		&       & 6     & 1655  & 1678  & 5.46\% & 0.12\% & 0.12\% \\
		&       & 10    & 28    & 28    & 4.60\% &0\% &0\% \\
		\cline{2-8}          & \multirow{3}[2]{*}{40} & 2     & 1754  & 1754  &0\% &0\% &0\% \\
		&       & 6     & 4027  & 4055  & 16.31\% & 0.68\% & 0.68\% \\
		&       & 10    & 1673  & 1674  & 6.88\% & 0.10\% & 0.10\% \\
		\cline{2-8}          & \multirow{3}[2]{*}{50} & 2     & 7497  & 7544  &0\% & 0.62\% & 0.62\% \\
		&       & 6     & 1940  & 1940  & 23.84\% &0\% &0\% \\
		&       & 10    & 875   & 875   & 6.88\% &0\% &0\% \\
		\hline
		\multirow{14}{*}{\texttt{Star}} & \multirow{2}[2]{*}{10} & 2     & 3     & 3     &0\% &0\% &0\% \\
		&       & 6     & 2515  & 2507  & 13.33\% & 13.33\% & 13.33\% \\
		\cline{2-8}          & \multirow{3}[2]{*}{20} & 2     & 8     & 8     &0\% &0\% &0\% \\
		&       & 6     & 4456  & 4519  &0\% &0\% &0\% \\
		&       & 10    & 4369  & 4438  & 0.56\% &0\% &0\% \\
		\cline{2-8}          & \multirow{3}[2]{*}{30} & 2     & 489   & 489   &0\% &0\% &0\% \\
		&       & 6     & 1978  & 1994  & 6.33\% & 0.13\% & 0.13\% \\
		&       & 10    & 11739 & 11823 & 4.58\% & 1.96\% & 1.96\% \\
		\cline{2-8}          & \multirow{3}[2]{*}{40} & 2     & 1754  & 1754  &0\% &0\% &0\% \\
		&       & 6     & 11847 & 11941 & 9.32\% & 0.95\% & 0.95\% \\
		&       & 10    & 12071 & 12179 & 10.20\% & 1.42\% & 1.42\% \\
		\cline{2-8}          & \multirow{3}[2]{*}{50} & 2     & 7511  & 7522  &0\% & 0.62\% & 0.62\% \\
		&       & 6     & 4934  & 4865  & 13.64\% & 1.72\% & 1.72\% \\
		&       & 10    & 14225 & 14125 & 14.40\% & 1.23\% & 1.23\% \\
		\hline
		\multirow{14}{*}{\texttt{Star-Ring}} & \multirow{2}[2]{*}{10} & 2     & 3     & 3     &0\% &0\% &0\% \\
		&       & 6     & 100   & 100   & 23.33\% & 23.33\% & 23.33\% \\
		\cline{2-8}          & \multirow{3}[2]{*}{20} & 2     & 8     & 8     &0\% &0\% &0\% \\
		&       & 6     & 2951  & 2959  &0\% &0\% &0\% \\
		&       & 10    & 4167  & 4217  & 6.67\% & 6.67\% & 6.67\% \\
		\cline{2-8}          & \multirow{3}[2]{*}{30} & 2     & 491   & 491   &0\% &0\% &0\% \\
		&       & 6     & 11456 & 11535 & 1.68\% & 2.03\% & 2.03\% \\
		&       & 10    & 11121  & 11295 & 3.49\% & 1.28\% & 1.28\% \\
		\cline{2-8}          & \multirow{3}[2]{*}{40} & 2     & 1721  & 1721  &0\% &0\% &0\% \\
		&       & 6     & 7187  & 7230  & 3.44\% & 2.90\% & 2.90\% \\
		&       & 10    & 12675 & 12778 & 8.97\% & 1.36\% & 1.36\% \\
		\cline{2-8}          & \multirow{3}[2]{*}{50} & 2     & 7213  & 7326  &0\% & 0.62\% & 0.62\% \\
		&       & 6     & 6454  & 6381  & 10.49\% & 5.80\% & 5.80\% \\
		&       & 10    & 6210  & 6168  & 15.64\% & 1.72\% & 1.72\% \\
		\hline
	\end{tabular}%
}
\caption{Averaged number of cuts in the incomplete formulations and MIP gaps for graphs \texttt{Matching}, \texttt{Star} and \texttt{Star-Ring}.\label{t:cuts2}}
\end{table}%

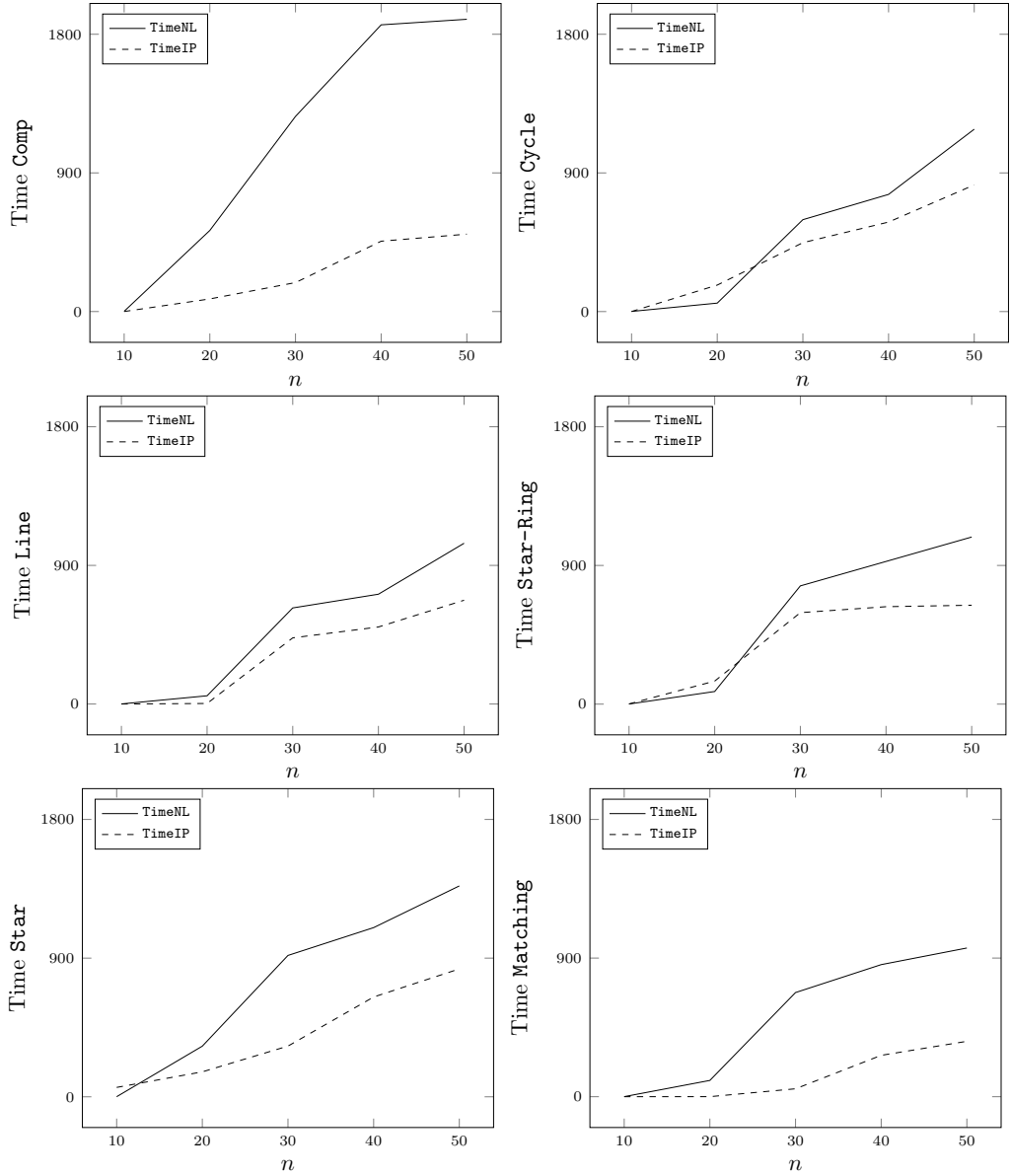
\begin{figure}[H]
   \adjustbox{max totalheight=0.8\textheight}{\begin{tikzpicture}[scale=0.8]
    \begin{axis}[
    xlabel={$n$},
    ymax=2000,
    ylabel={Time   \texttt{Comp}},
	xtick={0,10, 20, 30, 40, 50},
	xticklabels={0,10, 20, 30, 40, 50},
	ytick={0, 900, 1800, 2700, 3600},
	yticklabels={0, 900, 1800, 2700, 3600},
    legend pos=north west,
    xticklabel style={font=\tiny},
yticklabel style={font=\tiny}]
    ]
    \addplot [no marks] table [x=n, y=NL] {
n   NL    
0	0
10	1.378718233
20	526.4110812
30	1266.593341
40	1861.128207
50	1897.256109
};
    \addplot [no marks, dashed] table [x=n, y=NL] {    
n   NL    
0	0
10	0.085747167
20	81.87395378
30	187.9458588
40	456.5518347
50	502.4484196
};

\addlegendentry{\tiny\texttt{TimeNL}}
\addlegendentry{\tiny\texttt{TimeIP}}
    \end{axis}
    \end{tikzpicture}~\begin{tikzpicture}[scale=0.8]
    \begin{axis}[
    xlabel={$n$},
    ymax=2000,
    ylabel={Time   \texttt{Cycle}},
	xtick={0,10, 20, 30, 40, 50},
	xticklabels={0,10, 20, 30, 40, 50},
	ytick={0, 900, 1800, 2700, 3600},
	yticklabels={0, 900, 1800, 2700, 3600},
    legend pos=north west,
    xticklabel style={font=\tiny},
yticklabel style={font=\tiny}]
    ]
   \addplot [no marks] table [x=n, y=NL] {
n   NL    
0	0
10	0.9333426
20	54.85037594
30	596.5151607
40	760.5267706
50	1183.626357
};
    \addplot [no marks, dashed] table [x=n, y=NL] {
n   NL    
0	0
10	0.311870483
20	171.9403355
30	447.3044077
40	580.9036391
50	822.1401502
};

\addlegendentry{\tiny\texttt{TimeNL}}
\addlegendentry{\tiny\texttt{TimeIP}}
    \end{axis}
    \end{tikzpicture}}\\
    
    \adjustbox{max totalheight=0.8\textheight}{\begin{tikzpicture}[scale=0.8]
    \begin{axis}[
    xlabel={$n$},
    ymax=2000,
    ylabel={Time   \texttt{Line}},
	xtick={0,10, 20, 30, 40, 50},
	xticklabels={0,10, 20, 30, 40, 50},
	ytick={0, 900, 1800, 2700, 3600},
	yticklabels={0, 900, 1800, 2700, 3600},
    legend pos=north west,
    xticklabel style={font=\tiny},
yticklabel style={font=\tiny}]
    ]
   \addplot [no marks] table [x=n, y=NL] {
n   NL    
0	0
10	0.5956434
20	53.5401103
30	623.8237767
40	712.9073212
50	1043.717478
};
    \addplot [no marks, dashed] table [x=n, y=NL] {
n   NL    
0	0
10	0.089222617
20	3.615582389
30	429.5353879
40	500.1077315
50	673.7783997
};

\addlegendentry{\tiny\texttt{TimeNL}}
\addlegendentry{\tiny\texttt{TimeIP}}
    \end{axis}
    \end{tikzpicture}~ \begin{tikzpicture}[scale=0.8]
    \begin{axis}[
    xlabel={$n$},
    ymax=2000,
    ylabel={Time   \texttt{Star-Ring}},
	xtick={0,10, 20, 30, 40, 50},
	xticklabels={0,10, 20, 30, 40, 50},
	ytick={0, 900, 1800, 2700, 3600},
	yticklabels={0, 900, 1800, 2700, 3600},
    legend pos=north west,
    xticklabel style={font=\tiny},
yticklabel style={font=\tiny}]
    ]
   \addplot [no marks] table [x=n, y=NL] {
n   NL    
0	0
10	0.718876217
20	81.95137677
30	767.35
40	914.949789
50	1084.439362
};
    \addplot [no marks, dashed] table [x=n, y=NL] {
n   NL    
0	0    
10	1.230248133
20	149.4697511
30	593.6095651
40	632.2438837
50	641.8417396
};

\addlegendentry{\tiny\texttt{TimeNL}}
\addlegendentry{\tiny\texttt{TimeIP}}
    \end{axis}
    \end{tikzpicture}}
    
\adjustbox{max totalheight=0.8\textheight}{\begin{tikzpicture}[scale=0.8]
    \begin{axis}[
    xlabel={$n$},
    ymax=2000,
    ylabel={Time   \texttt{Star}},
	xtick={0,10, 20, 30, 40, 50},
	xticklabels={0,10, 20, 30, 40, 50},
	ytick={0, 900, 1800, 2700, 3600},
	yticklabels={0, 900, 1800, 2700, 3600},
    legend pos=north west,
    xticklabel style={font=\tiny},
yticklabel style={font=\tiny}]
    ]
   \addplot [no marks] table [x=n, y=NL] {
n   NL    
0	0
10	0.708905583
20	328.7619082
30	917.5593277
40	1097.987947
50	1368.537804
};
    \addplot [no marks, dashed] table [x=n, y=NL] {
n   NL    
0	0    
10	61.0318166
20	161.9971762
30	329.1479002
40	647.8329025
50	829.9491042
};

\addlegendentry{\tiny\texttt{TimeNL}}
\addlegendentry{\tiny\texttt{TimeIP}}
    \end{axis}
    \end{tikzpicture}~\begin{tikzpicture}[scale=0.8]
    \begin{axis}[
    xlabel={$n$},
    ymax=2000,
    ylabel={Time   \texttt{Matching}},
	xtick={0,10, 20, 30, 40, 50},
	xticklabels={0,10, 20, 30, 40, 50},
	ytick={0, 900, 1800, 2700, 3600},
	yticklabels={0, 900, 1800, 2700, 3600},
    legend pos=north west,
    xticklabel style={font=\tiny},
yticklabel style={font=\tiny}]
    ]
   \addplot [no marks] table [x=n, y=NL] {
n   NL    
0	0
10	0.454350383
20	105.6764069
30	676.2012162
40	857.4004799
50	965.3544121
};
    \addplot [no marks, dashed] table [x=n, y=NL] {
n   NL    
0	0
10	0.0526796
20	0.338336744
30	51.45913552
40	268.6431282
50	359.6096521
};

\addlegendentry{\tiny\texttt{TimeNL}}
\addlegendentry{\tiny\texttt{TimeIP}}
    \end{axis}
    \end{tikzpicture} } 
    
\caption{Graphics of averaged CPU times for the different graphs by number of demand points.\label{fig:times}}
\end{figure}

\section{Conclusions}\label{sec:6}

In this paper, we analyzed a novel version of the Continuous Maximal Covering Location Problem in which the facilities are required to be linked through a given graph structure and such that its distance does not exceed a given distance limit. We provide a general framework for the problem for any finite dimensional space and any norm-based distance. We provide a MISOCO formulation for the problem. We further analyze the geometry of the problem and proved that the continuous variables of the formulation can be projected out and the nonlinear constraints can be replaced by polynomially many linear constraints, resulting in a compact ILP model for the problem. The geometry of planar instances is exploited for a suitable construction of the formulation. After testing the performance of this ILP formulation, and determining that the computational load of adding its linear constraints, we derive two branch-\&-cut approaches to efficiently solve the problem. The performance of the procedures is tested with an extensive battery of computational experiments showing that the branch-and-cut approaches outperforms the rest of the approaches. 

Further research on the topic includes, among others, the design of heuristic approaches for solving the problem or the extension of the \textit{interconnection} framework to other continuous multifacility location problems, as ordered median location problems \cite{BPE16,NP06}. A deeper analysis of the $\O$-sets introduced in this paper as well as finding valid inequalities for \eqref{IP} will shed light into new geometrical approaches for solving MCLPIF for larger instances. The use of block norms (\cite{WW85}) simplifies $\O$-sets to polytopes, which may ease the analysis.

\section*{Acknowledgements}

The authors were partially supported by research group SEJ-584 (Junta de Andaluc\'ia). The first author was also supported by Spanish Ministry of Education and Science/FEDER grant number MTM2016-74983-C02-(01-02), and projects FEDER-US-1256951, CEI-3-FQM331 and \textit{NetmeetData}: Ayudas Fundaci\'on BBVA a equipos de investigaci\'on cient\'ifica 2019. The second author was supported by Spanish Ministry of Education and Science grant number PEJ2018-002962-A.

\end{document}